\documentclass[12pt,twoside]{amsart}
\usepackage{geometry}
\geometry{a4paper,top=3cm,bottom=3cm,left=2.5cm,right=2.5cm}
\usepackage{amssymb,amsmath,amsthm, amscd, enumerate, mathrsfs}
\usepackage{graphicx, hhline}
\usepackage[all]{xy}
\usepackage[dvipdfmx]{hyperref}
\usepackage[shortlabels]{enumitem}
\setlist[enumerate]{leftmargin=56pt,labelsep=
8pt,itemsep=4pt,label=\upshape{(\thethm.\arabic*)}}
\usepackage{mathtools}
\mathtoolsset{showonlyrefs}
\usepackage{xcolor}

\newcommand{\RomI}{\uppercase\expandafter{\romannumeral 1}}

%%%%%%%%%%%%%
\title{On the Kodaira dimension}
\author{Osamu Fujino and Taro Fujisawa}
{\date{2024/7/9, version 0.13}
\subjclass[2020]{Primary 14D06; Secondary 14E30}
\keywords{logarithmic base change theorem, Kodaira dimensions, 
logarithmic Kodaira dimensions, 
variation of mixed Hodge structure, Higgs sheaves, 
pseudo-effectivity, weak positivity, semipositivity}
\address{Department of 
Mathematics, Graduate School of Science, 
Kyoto University, Kyoto 606-8502, Japan}
\email{fujino@math.kyoto-u.ac.jp}
\address{Department of Mathematics, School of Engineering, 
Tokyo Denki University, Tokyo, Japan}
\email{fujisawa@mail.dendai.ac.jp}

%%%%%%%%%%%%%%%%%%%%%%%%%%%
\DeclareMathOperator{\codim}{codim}
\DeclareMathOperator{\Supp}{Supp}

\DeclareMathOperator{\Gr}{Gr}

\DeclareMathOperator{\image}{Image}
\DeclareMathOperator{\red}{red}

\DeclareMathOperator{\coker}{Coker}

\DeclareMathOperator{\id}{id}
\DeclareMathOperator{\kos}{Kos}
\DeclareMathOperator{\res}{Res}

\newcommand{\gp}{^{\rm gp}}
%%%%%%%%%%%%%%%%%%%%%%%%%%%%%%
% Definitions for new environments
% theorem style plain --- default

\newtheorem{thm}{Theorem}[section]
\newtheorem{lem}[thm]{Lemma}
\newtheorem{cor}[thm]{Corollary}

\newtheorem{conj}[thm]{Conjecture}

\theoremstyle{definition}
\newtheorem{defn}[thm]{Definition}
\newtheorem{rem}[thm]{Remark}

\newtheorem*{ack}{Acknowledgments}  

\newtheorem{say}[thm]{}

%%%%
\newtheorem{thm-s}{Theorem}[subsection]

\theoremstyle{definition}
\newtheorem{defn-s}[thm-s]{Definition}
\newtheorem{rem-s}[thm-s]{Remark}
\newtheorem{ex-s}[thm-s]{Example}
\newtheorem{say-s}[thm-s]{}

%%%%%%%%%%%%%%%
\makeatletter

\@addtoreset{equation}{section}
\makeatother
%%%%%%%%%%%%%%%%%%%%%%%%%%%
\begin{document}

\maketitle 

\begin{abstract}
We discuss the behavior of the Kodaira dimension 
under smooth morphisms. 
\end{abstract} 

\tableofcontents 

\section{Introduction}\label{p-sec1} 
We will discuss the behavior of the Kodaira dimension under 
smooth morphisms. 
Throughout this paper, we will work over $\mathbb C$, the field of 
complex numbers. 
One of the motivations of this paper is to understand \cite{park}. 
In \cite{park}, Sung Gi Park established the following striking and 
unexpected theorem. 

\begin{thm}[{Park's logarithmic base change theorem, 
see \cite[Theorem 1.2]{park}}]\label{p-thm1.1}
Let $X$, $Y$, and $Y'$ be smooth quasi-projective 
varieties and let $E$, $D$, and $D'$ be simple normal crossing 
divisors on $X$, $Y$, and $Y'$, respectively. 
Let $f\colon X\to Y$ and $g\colon Y'\to Y$ be projective 
surjective morphisms such that $f$ and $g$ are smooth 
over $Y\setminus D$, 
$f^{-1}(D)\subset E$ and $g^{-1}(D)\subset D'$, and that 
$E$ and $D'$ are relatively normal crossing over $Y\setminus D$. 
Let $X'$ be the union of the irreducible components 
of $X\times _Y Y'$ dominating $Y$ and $E':=g'^{-1}(E)\cup f'^{-1}(D')$. 
We consider the following commutative diagram: 
\begin{equation}
\xymatrix{
(X, E) \ar[d]_-f& (X', E') \ar[d]_-{f'}\ar[l]_-{g'}& (X'', E'')\ar[l]_-\mu
\ar[dl]^-{f''} \\ 
(Y, D) & (Y', D')\ar[l]^-g & 
}
\end{equation} 
where $\mu\colon X''\to X'$ is a projective 
resolution of singularities such that 
$\mu$ is an isomorphism over $Y\setminus D$, 
$E''$ is a simple normal crossing divisor on $X''$ such that 
$E''$ coincides with $E'$ over $Y\setminus D$ and that 
$(g\circ f'')^{-1}(D)\subset E''$. 
We put $\omega_{(X, E)}:=\omega_X\otimes \mathscr O_X(E)$, 
$\omega_{(Y, D)}:=\omega_Y\otimes \mathscr O_Y(D)$, 
$\omega_{(X, E)/(Y, D)}
:=\omega_{(X, E)}\otimes f^*\omega_{(Y, D)}^{\otimes -1}$, and so on.  
Then, for every positive integer $N$, 
there exists a generically isomorphic inclusion 
\begin{equation}\label{p-eq1.1}
\left(f_*\omega_{(X, E)/(Y, D)}^{\otimes N}\otimes 
g_*\omega_{(Y', D')/(Y, D)}^{\otimes N}\right)^{\vee\vee} 
\subset \left( h_*\omega_{(X'', E'')/(Y, D)}^{\otimes N}\right)
^{\vee\vee}  
\end{equation}
where $h:=g\circ f''$. 
\end{thm}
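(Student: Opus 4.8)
The plan is to build \eqref{p-eq1.1} by first exhibiting a natural morphism of the two sheaves on $Y$ that restricts to an isomorphism over $U:=Y\setminus D$, and then passing to double duals. Write $a:=g'\circ\mu\colon X''\to X$, so that $f''=f'\circ\mu$ and $h=f\circ a=g\circ f''$. Over $U$ all the morphisms in sight are smooth, $\mu$ is an isomorphism, and the components of $X\times_YY'$ discarded in forming $X'$ lie over $D$; hence $X''|_U=X|_U\times_U Y'|_U$, with $a|_U$ and $f''|_U$ the two projections and $E''|_U=a|_U^{*}(E|_U)+f''|_U^{*}(D'|_U)$ the relative boundary.

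On a fibre product of log smooth morphisms the relative log canonical sheaf is the tensor product of the pullbacks of the two factors, so over $U$ one has $\omega_{(X'',E'')/(Y,D)}\cong a^{*}\omega_{(X,E)/(Y,D)}\otimes (f'')^{*}\omega_{(Y',D')/(Y,D)}$. Raising to the $N$-th power, applying $h_*$, and combining flat base change for the cartesian square with the projection formula in the standard way gives a canonical isomorphism
\begin{equation}
f_*\omega_{(X,E)/(Y,D)}^{\otimes N}\otimes g_*\omega_{(Y',D')/(Y,D)}^{\otimes N}\;\xrightarrow{\ \sim\ }\;h_*\omega_{(X'',E'')/(Y,D)}^{\otimes N}
\end{equation}
over $U$. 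This is the generic isomorphism; it remains to promote it to a global morphism and then to control the two sides in codimension one along $D$.

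For the global morphism I would argue with log differential forms. Wedging the $a$-pullback of a relative log top form on $(X,E)/(Y,D)$ with the $f''$-pullback of a relative log top form on $(Y',D')/(Y,D)$ defines a map of invertible sheaves
\begin{equation}
a^{*}\omega_{(X,E)/(Y,D)}^{\otimes N}\otimes (f'')^{*}\omega_{(Y',D')/(Y,D)}^{\otimes N}\;\longrightarrow\;\omega_{(X'',E'')/(Y,D)}^{\otimes N}
\end{equation}
on $X''$, which over $U$ is the isomorphism above. The crucial claim is that this map really takes values in the log canonical sheaf. Because $d\log$ is functorial, log poles pull back to log poles even across ramification, so $a^{*}$ sends forms with log poles along $E$ to forms with log poles along $a^{-1}(E)$, and likewise $(f'')^{*}$ along $(f'')^{-1}(D')$. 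The hypotheses $f^{-1}(D)\subset E$ and $g^{-1}(D)\subset D'$, together with $h^{-1}(D)\subset E''$ and the fact that $E''$ agrees with $E'=g'^{-1}(E)\cup f'^{-1}(D')$ over $U$, force both $a^{-1}(E)$ and $(f'')^{-1}(D')$ to be supported on $E''$: the vertical parts land in $h^{-1}(D)\subset E''$, and the horizontal parts are the closures of components of $E''|_U$. Hence the pulled-back forms, and therefore their wedge, have at worst log poles along $E''$. Composing $h_*$ of this map with the canonical morphism $f_*(-)\otimes g_*(-)\to h_*\bigl(a^{*}(-)\otimes (f'')^{*}(-)\bigr)$ then produces the desired morphism on $Y$, which agrees with the isomorphism above over $U$.

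It then remains to pass to reflexive hulls. Since $Y$ is smooth, $f_*\omega_{(X,E)/(Y,D)}^{\otimes N}$ and $g_*\omega_{(Y',D')/(Y,D)}^{\otimes N}$ are torsion free, so the double dual of their tensor product is reflexive; the morphism just constructed induces a map of reflexive sheaves that is an isomorphism over $U$, hence generically an isomorphism, hence injective. This yields the generically isomorphic inclusion \eqref{p-eq1.1}. I expect the main obstacle to be the well-definedness of the wedge map along the boundary over $D$: one must verify precisely that pulling back relative log forms under $a$ (a base change of $g$ that may degenerate over $D$) and under the resolution $\mu$ introduces no poles outside $E''$, i.e. that $a^{-1}(E)$ and $(f'')^{-1}(D')$ are genuinely contained in $E''$. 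This boundary bookkeeping, rather than the generic fibre-product computation, is the real content of the statement.
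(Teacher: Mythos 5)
Your outer architecture is the right one, and it matches what any proof must do: identify the two sheaves over $U=Y\setminus D$ (there $X''$ is the honest fibre product, and flat base change plus the K\"unneth decomposition of the relative log canonical bundle give the isomorphism), reduce \eqref{p-eq1.1} to the existence of one global homomorphism of sheaves on $Y$ extending that generic isomorphism, and finish by noting that a generically isomorphic map of torsion-free sheaves on the smooth variety $Y$ stays injective after taking reflexive hulls. The gap is in the middle step, the global map
\begin{equation}
a^{*}\omega_{(X,E)/(Y,D)}^{\otimes N}\otimes (f'')^{*}\omega_{(Y',D')/(Y,D)}^{\otimes N}\longrightarrow\omega_{(X'',E'')/(Y,D)}^{\otimes N},
\end{equation}
and the justification you offer for it is not a proof. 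The functoriality you invoke ($d\log$ pulls back, $a^{-1}(E)\subset E''$, $(f'')^{-1}(D')\subset E''$) is a statement about \emph{absolute} log differentials, and for absolute forms your wedge is identically zero for degree reasons: pulled back to $X''$, sections of $\omega_{(X,E)}$ and $\omega_{(Y',D')}$ are forms of degrees $\dim X$ and $\dim Y'$, and $\dim X+\dim Y'=\dim X''+\dim Y>\dim X''$. To wedge \emph{relative} log top forms one must first divide by $h^{*}$ of a log top form of $(Y,D)$, and this is where the difficulty sits: $\omega_{(X,E)/(Y,D)}$ is by definition the twist $\omega_X(E)\otimes f^{*}\omega_Y(D)^{\otimes -1}$, not the determinant of $\coker\bigl(f^{*}\Omega^1_Y(\log D)\to\Omega^1_X(\log E)\bigr)$; the two agree only where $f$ is log smooth, and the hypotheses put no restriction (semistability, reducedness of fibres, etc.) on how $f$ and $g$ degenerate over $D$. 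So near the bad locus a section of $\omega_{(X,E)/(Y,D)}$ has no intrinsic meaning as a relative form, its ``pullback'' is only a rational section, and the division by $h^{*}\beta$ can a priori create poles along components of $h^{-1}(D)$ that dominate components of $D$ --- exactly the divisors you cannot throw away when passing to double duals on $Y$.

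Concretely, your map exists if and only if the canonical rational section of $\omega_{(X'',E'')/(Y,D)}\otimes a^{*}\omega_{(X,E)/(Y,D)}^{\otimes -1}\otimes (f'')^{*}\omega_{(Y',D')/(Y,D)}^{\otimes -1}$ determined by the isomorphism over $U$ has effective divisor along every prime divisor of $X''$ dominating a component of $D$; this is a genuine discrepancy inequality for the (in general badly singular) fibre product and its resolution, not a consequence of pullback functoriality, and nothing in your proposal establishes it. This is precisely why the result is called striking, and it is also why the present paper does not reprove it: its ``proof'' consists in reducing the case $f^{-1}(D)\subset E$, $g^{-1}(D)\subset D'$ to \cite[Proposition 2.5]{park} and the argument of \cite[Section 2]{park}, where the inequality is established by a substantive local analysis along $D$ (after discarding codimension-two subsets of $Y$, the condition $f^{-1}(D)\subset E$ with $E$ simple normal crossing forces $f$ and $g$ to be locally monomial, i.e.\ toroidal, along $D$, and one then compares $X''$ with the resulting fibre-product singularities). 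Your closing sentence correctly identifies this boundary effectivity as ``the real content of the statement,'' but the proposal asserts it rather than proves it; supplying that local computation is the missing --- and main --- step.
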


Precisely speaking, Park treated only the case where 
$f^{-1}(D)=E$ and $g^{-1}(D)=D'$ hold. 
However, we can easily see that 
\cite[Proposition 2.5]{park} implies 
the inclusion \eqref{p-eq1.1}. 
As a direct and easy consequence of Theorem \ref{p-thm1.1}, 
he obtained the following very important result. 

\begin{cor}[{Park's logarithmic fiber product trick, 
see \cite[Corollary 1.4]{park}}]\label{p-cor1.2}
Let $f\colon X\to Y$ be a projective surjective morphism 
of smooth quasi-projective varieties. Let $E$ and 
$D$ be simple normal crossing divisors on $X$ and $Y$, respectively. 
Assume that $f$ is smooth over $Y\setminus D$, $E$ is 
relatively normal crossing over $Y\setminus D$, and $f^{-1}(D)\subset E$. 
Let $s$ be a positive integer and let 
$f^s\colon X^s:=X\times _Y \cdots \times _YX\to Y$ be the $s$-fold fiber product 
of $f\colon X\to Y$. 
We put $E^s:=\sum _{i=1}^s p^*_i E$, where 
$p_i\colon X^s\to X$ is the $i$-th projection for every $i$. 
Let $\varphi\colon X^{(s)}\to X^s$ be a projective resolution 
of singularities of the union of the irreducible 
components of $X^s$ dominating $Y$ such that 
$\varphi$ is an isomorphism over $Y\setminus D$. 
Let $E^{(s)}$ be a simple normal crossing divisor 
on $X^{(s)}$ such that $E^{(s)}$ coincides 
with $E^s$ over $Y\setminus D$ and 
that $(f^{(s)})^{-1}(D)\subset E^{(s)}$, where 
$f^{(s)}:=f^s\circ \varphi$. 
Then we have the following generically isomorphic 
injection  
\begin{equation}\label{p-eq1.2}
\left( \bigotimes ^s f_*\left(\omega_{(X, E)/(Y, D)}^{\otimes N}
\right)\right)^{\vee\vee}
\hookrightarrow 
\left( f^{(s)}_* \left(\omega_{(X^{(s)}, E^{(s)})/(Y, D)}
^{\otimes N}\right)\right)^{\vee\vee}
\end{equation} 
for every positive integer $N$. 
Note that $X^{(s)}$ is smooth, $f^{(s)}$ is smooth over $Y\setminus D$, 
$E^{(s)}$ is a simple normal crossing 
divisor on $X^{(s)}$ and is relatively 
normal crossing over $Y\setminus D$, and $(f^{(s)})^{-1}(D)\subset E^{(s)}$.  
\end{cor}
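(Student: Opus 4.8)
The plan is to derive Corollary \ref{p-cor1.2} from Theorem \ref{p-thm1.1} by an induction on the number of factors $s$, applying the base change theorem at each step with the $(s-1)$-fold fiber product playing the role of the second family $g\colon Y'\to Y$. Let me think through how the induction should go and where the technical friction lies.

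Let me set this up carefully. For $s=1$ there is nothing to prove. Assume the claim for $s-1$, so I have a smooth model $X^{(s-1)}\to Y$ with boundary $E^{(s-1)}$ satisfying all the stated properties, together with the injection for the $(s-1)$-fold product. Now I want to apply Theorem \ref{p-thm1.1} with $f\colon (X,E)\to(Y,D)$ in the role of $f$ and with $g\colon (X^{(s-1)},E^{(s-1)})\to(Y,D)$ (i.e. $f^{(s-1)}$) in the role of $g\colon (Y',D')\to(Y,D)$. The hypotheses of the theorem are met: both morphisms are projective surjective and smooth over $Y\setminus D$, both boundaries contain the preimage of $D$ and are relatively normal crossing over $Y\setminus D$. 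The theorem then produces a smooth model $X''$ of (the dominant part of) $X\times_Y X^{(s-1)}$ with boundary $E''$, an inclusion
\begin{equation}
\left(f_*\omega_{(X,E)/(Y,D)}^{\otimes N}\otimes f^{(s-1)}_*\omega_{(X^{(s-1)},E^{(s-1)})/(Y,D)}^{\otimes N}\right)^{\vee\vee}\subset\left(h_*\omega_{(X'',E'')/(Y,D)}^{\otimes N}\right)^{\vee\vee},
\end{equation}
and I combine this with the inductive inclusion (tensored with $f_*\omega_{(X,E)/(Y,D)}^{\otimes N}$ and double-dualized) to obtain the map from the $s$-fold tensor power into $(h_*\omega_{(X'',E'')/(Y,D)}^{\otimes N})^{\vee\vee}$. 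Since every inclusion here is generically an isomorphism over $Y\setminus D$, the composite is too.

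The remaining point is bookkeeping: I must check that the $X''$ produced by the theorem is a legitimate choice of $X^{(s)}$, i.e. that it can be taken to be a resolution of the dominant part of $X^s=X\times_Y\cdots\times_Y X$ rather than of $X\times_Y X^{(s-1)}$. By induction $X^{(s-1)}$ is already a resolution of the dominant part of $X\times_Y\cdots\times_Y X$ ($s-1$ factors) and is an isomorphism over $Y\setminus D$, so over $Y\setminus D$ the fiber product $X\times_Y X^{(s-1)}$ agrees with $X^s$; hence a common resolution $X''=X^{(s)}$, isomorphic to both over $Y\setminus D$, serves simultaneously, and its boundary $E''$ restricts over $Y\setminus D$ to the relatively normal crossing divisor $E^s$ as required. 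One verifies that $E''$ is simple normal crossing, relatively normal crossing over $Y\setminus D$, and contains $(f^{(s)})^{-1}(D)$ — all of which are exactly the properties guaranteed by the construction of $E''$ in Theorem \ref{p-thm1.1}.

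The main obstacle I anticipate is not the homological input but the compatibility of the resolution and boundary choices across the induction: one must ensure that at each stage the model $X^{(s)}$ and divisor $E^{(s)}$ can be chosen to satisfy \emph{all} of the listed properties simultaneously, and that the generic isomorphism over $Y\setminus D$ is preserved so that the final inclusion is again generically an isomorphism. This is essentially a matter of invoking the freedom in the choice of resolution in Theorem \ref{p-thm1.1} (it holds for \emph{any} such $\mu$) and tracking the open locus $Y\setminus D$ throughout; once that is set up cleanly, the tensor-power inclusion follows formally. The statement about $X^{(s)}$ being smooth with the asserted properties is then automatic from the corresponding conclusions of the theorem.
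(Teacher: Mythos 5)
Your proof is correct and takes essentially the same route as the paper: the paper's own proof simply applies Theorem \ref{p-thm1.1} repeatedly, which is exactly your induction on $s$ with $f^{(s-1)}\colon (X^{(s-1)},E^{(s-1)})\to (Y,D)$ playing the role of $g\colon (Y',D')\to (Y,D)$. The bookkeeping you flag (the freedom in the choice of the resolution in Theorem \ref{p-thm1.1} and tracking that each inclusion is an isomorphism over a nonempty Zariski open subset of $Y$) is likewise left implicit in the paper.
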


We make a small remark on Corollary \ref{p-cor1.2}. 

\begin{rem}\label{p-rem1.3} 
If $f\colon X \to Y$ has connected fibers in Corollary \ref{p-cor1.2}, 
then $X^{(s)}$ is a smooth variety, that is, $X^{(s)}$ is connected, by 
construction. In general, $X^{(s)}$ may have some connected components. 
\end{rem}

In this paper, we will establish the following 
theorem, 
which is a slight generalization of 
\cite[Section 3]{park}, 
as an application of Corollary \ref{p-cor1.2} and 
the theory of variations of 
mixed Hodge structure. 
In \cite{park}, Theorem \ref{p-thm1.4} was 
treated under the assumption that 
$f^{-1}(D)=E$ holds. 

\begin{thm}[{see \cite[Section 3]{park}}]\label{p-thm1.4}
Let $f\colon X\to Y$ be a surjective morphism 
of smooth projective varieties and let $E$ and $D$ be simple 
normal crossing divisors on $X$ and $Y$, respectively. 
Assume that $f$ is smooth over $Y\setminus D$, 
$E$ is relatively normal crossing over $Y\setminus D$, 
and $f^{-1}(D)\subset E$. 
Let $\mathscr L$ be a line bundle on $Y$ such that 
there exists a nonzero homomorphism 
\begin{equation}
\mathscr L^{\otimes N} \to 
\left(f_*\omega_{(X, E)/(Y, D)}^{\otimes N}\right)^{\vee\vee}
\end{equation} 
for some positive integer $N$. 
Then there exists a pseudo-effective line bundle 
$\mathscr P$ on $Y$ and a nonzero homomorphism 
\begin{equation}
\mathscr L^{\otimes r} \otimes \mathscr P \to 
\left(\Omega^1_Y(\log D)\right)^{\otimes kr}
\end{equation} 
for some $r>0$ and $k\geq 0$. 
\end{thm}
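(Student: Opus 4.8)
The plan is to follow the strategy of Viehweg--Zuo, reorganized around the theory of variations of mixed Hodge structure: from the family one manufactures a logarithmic Higgs bundle on $(Y,D)$ whose top Hodge piece receives a power of $\mathscr L$, and then one iterates the nilpotent Higgs field until it dies, pushing the image of $\mathscr L$ into the kernel of the Higgs field, tensored with a power of $\Omega^1_Y(\log D)$. The pseudo-effective bundle $\mathscr P$ will emerge from the semi-negativity of the Hodge metric on that kernel.

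First I would replace the given family by a more convenient one using the logarithmic fiber product trick. Tensoring together $s$ copies of the hypothesis $\mathscr L^{\otimes N}\to (f_*\omega_{(X,E)/(Y,D)}^{\otimes N})^{\vee\vee}$ and composing with the generically isomorphic injection of Corollary~\ref{p-cor1.2} produces a nonzero homomorphism
\begin{equation}
\mathscr L^{\otimes Ns}\to \left( f^{(s)}_*\omega_{(X^{(s)}, E^{(s)})/(Y,D)}^{\otimes N}\right)^{\vee\vee}
\end{equation}
for the $s$-fold fiber product family $f^{(s)}\colon X^{(s)}\to Y$, which again satisfies all the hypotheses of the theorem. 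Taking $s$ large enlarges the positive subsheaf as needed, and lets me assume from the start that I work with a single such family. Combining this with a cyclic covering of degree dividing $N$ adapted to a section realizing the above map, I pass from the relative pluricanonical sheaf to the relative \emph{canonical} sheaf of an auxiliary family, so that the $N$-th powers are absorbed into a genuine weight-$n$ variation, where $n=\dim X-\dim Y$ is the relative dimension. Attaching to the middle cohomology of the fibers of the compactified complement its variation of mixed Hodge structure over $Y\setminus D$ and taking its canonical logarithmic extension across $D$, I obtain a logarithmic Higgs bundle $(E=\bigoplus_{p+q=n}E^{p,q},\theta)$ with nilpotent field $\theta\colon E^{p,q}\to E^{p-1,q+1}\otimes\Omega^1_Y(\log D)$, arranged so that an appropriate power $\mathscr L'$ of $\mathscr L$ embeds into the top piece $E^{n,0}$, the Hodge-theoretic incarnation of the pluricanonical pushforward.

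With the Higgs bundle in hand the comparison map is extracted formally. I consider the iterated field
\begin{equation}
\mathscr L'\hookrightarrow E^{n,0}\xrightarrow{\ \theta\ } E^{n-1,1}\otimes \Omega^1_Y(\log D)\to\cdots\to E^{n-k,k}\otimes\left(\Omega^1_Y(\log D)\right)^{\otimes k}
\end{equation}
and choose $k\geq 0$ maximal with this composite nonzero; such $k$ exists because $\theta$ is nilpotent and the $k=0$ map is the nonzero inclusion. By maximality the image lands in $\mathscr K\otimes(\Omega^1_Y(\log D))^{\otimes k}$, where $\mathscr K:=\ker\theta\cap E^{n-k,k}$, yielding a nonzero homomorphism $\mathscr L'\to\mathscr K\otimes(\Omega^1_Y(\log D))^{\otimes k}$. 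A standard manipulation---saturating the rank-one image and passing to a suitable tensor power $r$ to clear denominators and to pass to a determinant---then converts this into a homomorphism of line bundles
\begin{equation}
\mathscr L^{\otimes r}\otimes\mathscr P\to\left(\Omega^1_Y(\log D)\right)^{\otimes kr},
\end{equation}
in which $\mathscr P$ is the corresponding power of the dual of the saturated rank-one piece of $\mathscr K$. Since $\mathscr K$ is the kernel of the Higgs field on a lower Hodge piece, its Hodge metric has Griffiths-seminegative curvature, so that this dual, and hence $\mathscr P$, is pseudo-effective; its logarithmic extension across $D$ preserves this positivity. This is the asserted conclusion.

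I expect the main obstacle to be the construction carried out in the second paragraph: producing a genuine polarized variation of mixed Hodge structure whose associated logarithmic Higgs bundle has the \emph{reflexive pluricanonical} sheaf $(f_*\omega_{(X,E)/(Y,D)}^{\otimes N})^{\vee\vee}$---rather than the ordinary canonical pushforward---realized in its top graded piece, and controlling the canonical extension so that the seminegativity invoked in the last step remains valid up to and across the boundary divisor $D$. Here the fiber product trick of Corollary~\ref{p-cor1.2}, the cyclic covering handling the $N$-th power, and the full functoriality of the variation of mixed Hodge structure must be combined with care, and the generically-isomorphic nature of the inclusions must be tracked. Once this Hodge-theoretic package is correctly assembled, the iteration of $\theta$ and the passage to a power are formal.
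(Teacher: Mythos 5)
Your proposal follows the same overall route as the paper: convert the hypothesis into a section of $\mathscr N^{\otimes N}$, $\mathscr N=\omega_{(X,E)/(Y,D)}\otimes f^{*}\mathscr L^{\otimes -1}$, take a cyclic covering and a resolution to get an auxiliary family, use Hodge theory to produce a logarithmic Higgs object containing $\mathscr L$, and then your third paragraph is essentially the paper's Lemma \ref{p-lem4.4} (iterate the Higgs field, land in the kernel, pass to the determinant of the image, and invoke Hodge-theoretic negativity). However, there is a genuine gap in your second paragraph, and it is not the obstacle you name at the end. After the cyclic covering, the auxiliary family $h\colon Z\to Y$ is in general \emph{not} smooth over $Y\setminus D$: the zero divisor of the chosen section degenerates over points of $Y\setminus D$, so $h$ is smooth, with boundary relatively normal crossing, only over $Y\setminus D'$ for some divisor $D'\geq D$ that is typically strictly larger. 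Hence the variation of mixed Hodge structure attached to $h$ lives only on $Y\setminus D'$, and the Higgs field of its canonical extension has log poles along $D'$. Running your iteration on that bundle only yields a nonzero homomorphism $\mathscr L^{\otimes r}\otimes\mathscr P\to \bigl(\Omega^1_Y(\log D')\bigr)^{\otimes kr}$, which is strictly weaker than the assertion (and $D'$ is not even part of the data). The missing idea, which is the actual content of Theorem \ref{p-thm4.2} (following \cite[Theorem 3.2]{park} and Viehweg--Zuo), is a comparison with the \emph{original} family: one defines, with $d=\dim X-\dim Y$,
\begin{equation}
\mathscr F_k:=\Bigl(\image\Bigl(\rho_k\colon R^kf_*\bigl(\Omega^{d-k}_{X/Y}(\log E)\otimes\mathscr N^{\otimes -1}\bigr)\to R^kh_*\bigl(\Omega^{d-k}_{Z/Y}(\log E_Z)\bigr)\Bigr)\Bigr)^{\vee\vee}.
\end{equation}
Because the source carries connecting (Kodaira--Spencer) maps with poles only along $D$ --- the original $f$ is smooth over $Y\setminus D$ --- and the maps $\rho_k$ intertwine the two Higgs structures, $\mathscr F_\bullet$ is a graded logarithmic Higgs sheaf with poles along $D$ only, it contains $\mathscr L$ in degree $0$, and every subsheaf of $\ker\phi$ is also a subsheaf of the kernel of the Hodge-theoretic Higgs field, so that the weak positivity theorem (Theorem \ref{q-thm3.1}) applies. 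Your own diagnosis of the difficulty (realizing the full reflexive pluricanonical pushforward in the top graded piece) points in a different, and in fact unnecessary, direction: only $\mathscr L$ itself needs to sit in degree $0$.

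Two smaller points. First, the fiber-product step is superfluous for this theorem (the paper uses Corollary \ref{p-cor1.2} in the proof of Theorem \ref{p-thm1.5}, not here), and it damages the bookkeeping: replacing $\mathscr L$ by $\mathscr L^{\otimes s}$, the argument produces $\mathscr L^{\otimes sr}\otimes\mathscr P\to\bigl(\Omega^1_Y(\log D)\bigr)^{\otimes kr}$, which has the required shape $\mathscr L^{\otimes r'}\otimes\mathscr P\to\bigl(\Omega^1_Y(\log D)\bigr)^{\otimes k'r'}$ with integers $r'>0$, $k'\geq 0$ only when $s$ divides $k$; the same remark applies to your ``appropriate power $\mathscr L'$ of $\mathscr L$''. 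Second, your appeal to Griffiths semi-negativity of the Hodge metric on $\ker\theta$ is exactly the delicate point in the \emph{mixed} setting: the statement one would like to quote (Brunebarbe's Theorem 4.5) is false as stated, and the paper must prove a corrected semipositivity theorem (Theorem \ref{thm:1}), combined with unipotent reduction and flattening, to obtain Theorem \ref{q-thm3.1}; this can be used as a black box only once it is formulated correctly, with the admissibility and the lower canonical extension properly controlled.
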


By using Theorem \ref{p-thm1.4}, 
we will prove the following results. 

\begin{thm}[{see \cite[Theorem 1.5]{park}}]\label{p-thm1.5}
Let $f\colon X\to Y$ be a surjective morphism 
of smooth projective varieties with connected fibers. 
Let $E$ and $D$ be simple normal crossing divisors on 
$X$ and $Y$, respectively. 
Assume that $f^{-1}(D)\subset E$, $f$ is smooth over 
$Y\setminus D$, and $E$ is relatively normal crossing 
over $Y\setminus D$. 
We further assume that $\kappa (F, (K_X+E)|_F)\geq 0$ 
holds, where $F$ is a sufficiently general fiber of 
$f\colon X\to Y$. 
Then $\kappa (Y, K_Y+D)=\dim Y$ holds if and only if 
$\kappa (X, K_X+E)=\kappa (F, (K_X+E)|_F)+\dim Y$. 
\end{thm}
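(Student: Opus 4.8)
The plan is to combine the unconditional upper bound for the Kodaira dimension of a fiber space with the two positivity inputs at our disposal: Viehweg's weak positivity together with the fiber product trick (Corollary \ref{p-cor1.2}) for one implication, and Theorem \ref{p-thm1.4} for the other. Throughout put $n:=\dim Y$ and $\kappa_F:=\kappa(F,(K_X+E)|_F)$, and use the notation $\omega_{(X,E)/(Y,D)}$, $\omega_{(Y,D)}$ of Theorem \ref{p-thm1.1}, so that $\omega_{(X,E)}^{\otimes N}\simeq \omega_{(X,E)/(Y,D)}^{\otimes N}\otimes f^*\omega_{(Y,D)}^{\otimes N}$ and hence $H^0(X,\omega_{(X,E)}^{\otimes N})\simeq H^0\left(Y,(f_*\omega_{(X,E)/(Y,D)}^{\otimes N})\otimes \omega_{(Y,D)}^{\otimes N}\right)$ for every $N$. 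Since $f$ is smooth over $Y\setminus D$ and $E$ is relatively normal crossing there, adjunction gives $(K_X+E)|_F=K_F+E|_F$, so $\kappa(F,(K_X+E)|_F)=\kappa_F$, and $\rank f_*\omega_{(X,E)/(Y,D)}^{\otimes N}=h^0(F,N(K_F+E|_F))$ grows like $N^{\kappa_F}$ along a suitable sequence of $N$. First I would record that Iitaka's easy addition theorem gives $\kappa(X,K_X+E)\le \kappa_F+n$ unconditionally; it then remains to prove that equality holds if and only if $K_Y+D$ is big.

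For the implication ``$K_Y+D$ big $\Rightarrow$ equality'', I would run Viehweg's argument in the logarithmic setting. The sheaf $(f_*\omega_{(X,E)/(Y,D)}^{\otimes N})^{\vee\vee}$ is weakly positive over $Y$, and the fiber product trick of Corollary \ref{p-cor1.2} identifies, generically, a large tensor power of it with the pushforward attached to the $s$-fold fiber product $X^{(s)}$; this is what lets one reach high tensor powers while retaining a relative log canonical form. Tensoring weak positivity against the big line bundle $\omega_{(Y,D)}=\mathscr O_Y(K_Y+D)$ and feeding the result through the fiber product trick produces, for $N$ in the chosen sequence, a matching lower bound of order $N^{\kappa_F+n}$ for $h^0(X,\omega_{(X,E)}^{\otimes N})$, so that $\kappa(X,K_X+E)\ge \kappa_F+n$. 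Combined with easy addition this gives the desired equality; the hypothesis $\kappa_F\ge 0$ is used only to guarantee $f_*\omega_{(X,E)/(Y,D)}^{\otimes N}\ne 0$.

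For the converse, ``equality $\Rightarrow$ $K_Y+D$ big'', I would use Theorem \ref{p-thm1.4}. Assume $\kappa(X,K_X+E)=\kappa_F+n$. Because $\rank (f_*\omega_{(X,E)/(Y,D)}^{\otimes N})^{\vee\vee}\sim N^{\kappa_F}$ while $h^0(X,\omega_{(X,E)}^{\otimes N})\sim N^{\kappa_F+n}$, the pushforward sheaf attains the maximal possible Iitaka dimension; from this I would extract a line bundle $\mathscr L$ on $Y$ with $\kappa(Y,\mathscr L)=n$, i.e. $\mathscr L$ big, together with a nonzero homomorphism $\mathscr L^{\otimes N}\to (f_*\omega_{(X,E)/(Y,D)}^{\otimes N})^{\vee\vee}$ for some $N$. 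Theorem \ref{p-thm1.4} then produces a pseudo-effective line bundle $\mathscr P$ and a nonzero homomorphism $\mathscr L^{\otimes r}\otimes\mathscr P\to (\Omega^1_Y(\log D))^{\otimes kr}$ for some $r>0$, $k\ge 0$. As $\mathscr L$ is big and $\mathscr P$ is pseudo-effective, $\mathscr L^{\otimes r}\otimes \mathscr P$ is big, so $(\Omega^1_Y(\log D))^{\otimes kr}$ contains a big line bundle; such a Viehweg--Zuo sheaf forces $(Y,D)$ to be of log general type, that is, $K_Y+D$ big, by the logarithmic Bogomolov--Sommese vanishing and the theorem of Campana--Paun.

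I expect the main obstacle to lie in this converse, and specifically in the extraction step: converting the purely numerical statement that $f_*\omega_{(X,E)/(Y,D)}^{\otimes N}$ has maximal Iitaka dimension into an honest big line bundle $\mathscr L$ mapping into it, so that Theorem \ref{p-thm1.4} applies. I would handle this by passing to the Iitaka fibration of $(X,K_X+E)$, which dominates $Y$ precisely because $\kappa(X,K_X+E)=\kappa_F+n$, together with a Viehweg-type saturation argument inside $(f_*\omega_{(X,E)/(Y,D)}^{\otimes N})^{\vee\vee}$; the delicate points are to ensure that the resulting $\mathscr L$ is genuinely big rather than merely of large Iitaka dimension, and that the nonzero map persists after taking double duals.
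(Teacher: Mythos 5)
Your forward implication is fine in outline: it is Maehara's theorem, which the paper simply cites (\cite{maehara}, \cite{fujino-notes}), and your Viehweg-style sketch with weak positivity and the fiber product trick is essentially the standard proof of that known result. The genuine problem is your converse, and it lies exactly at the step you flagged as delicate: the extraction step is not merely delicate, it is impossible. Under the hypotheses of the converse there is in general \emph{no} big line bundle $\mathscr L$ admitting a nonzero homomorphism $\mathscr L^{\otimes N}\to \left(f_*\omega_{(X, E)/(Y, D)}^{\otimes N}\right)^{\vee\vee}$. Take the product $X=Y\times F$ with $E=D\times F+Y\times E_F$, where $K_Y+D$ is big and $\kappa (F, K_F+E_F)\geq 0$. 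Then $\kappa (X, K_X+E)=\kappa (F, (K_X+E)|_F)+\dim Y$ holds by the K\"unneth decomposition of $H^0(X, \omega_{(X, E)}^{\otimes N})$, but $f_*\omega_{(X, E)/(Y, D)}^{\otimes N}\simeq H^0(F, \omega_{(F, E_F)}^{\otimes N})\otimes \mathscr O_Y$ is a trivial bundle, so any nonzero map $\mathscr L^{\otimes N}\to \left(f_*\omega_{(X, E)/(Y, D)}^{\otimes N}\right)^{\vee\vee}$ yields a nonzero map $\mathscr L^{\otimes N}\to \mathscr O_Y$, that is, a nonzero section of $\mathscr L^{\otimes -N}$, which is impossible for $\mathscr L$ big (and $\dim Y\geq 1$). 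The source of the trouble is that $\omega_{(X, E)/(Y, D)}$ has the base positivity $f^*\omega_{(Y, D)}$ divided out; precisely in the situation you are trying to reach, where $K_Y+D$ is big, that division destroys bigness. Your rank/growth heuristic conflates $f_*\omega_{(X, E)}^{\otimes N}$, which does have many sections, with $f_*\omega_{(X, E)/(Y, D)}^{\otimes N}$, which need not contain any big line bundle.

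The paper's proof shows the correct way around this. By \cite[Proposition 1.14]{mori}, the equality hypothesis produces a positive integer $N$ and an \emph{ample} Cartier divisor $A$ on $Y$ with $f^*\mathscr O_Y(A)\subset \omega_{(X, E)}^{\otimes N}$; pushing forward (using $f_*\mathscr O_X=\mathscr O_Y$) gives $\mathscr L:=\mathscr O_Y(A)\otimes \omega_{(Y, D)}^{\otimes -N}\subset f_*\omega_{(X, E)/(Y, D)}^{\otimes N}$. This $\mathscr L$ is \emph{not} required to be big, and in general it is not. Since Theorem \ref{p-thm1.4} needs the same exponent on both sides, one then invokes Corollary \ref{p-cor1.2} with $s=N$ to obtain $\mathscr L^{\otimes N}\subset \left(\bigotimes ^N f_*\omega_{(X, E)/(Y, D)}^{\otimes N}\right)^{\vee\vee}\hookrightarrow \left(f^{(N)}_*\omega_{(X^{(N)}, E^{(N)})/(Y, D)}^{\otimes N}\right)^{\vee\vee}$ --- a use of the fiber product trick that is absent from your converse --- and applies Theorem \ref{p-thm1.4} to $f^{(N)}$. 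The resulting nonzero map $\mathscr L^{\otimes r}\otimes \mathscr P\to \left(\Omega^1_Y(\log D)\right)^{\otimes kr}$ is rewritten as a nonzero map $\mathscr O_Y(A)^{\otimes r}\otimes \mathscr P\to \left(\Omega^1_Y(\log D)\right)^{\otimes kr}\otimes \omega^{\otimes Nr}_{(Y, D)}$: the bigness you wanted to impose at the start is reinstated only now, by moving the $\omega_{(Y, D)}$-twist back to the target. From here \cite[Theorem 3.9]{park} (Campana--P\u{a}un) gives that $K_Y+D$ is pseudo-effective, and, after absorbing $\omega_{(Y, D)}\subset \left(\Omega^1_Y(\log D)\right)^{\otimes \dim Y}$ into a larger tensor power, \cite[Theorem 3.8]{park} applied to the genuinely big line bundle $\mathscr O_Y(A)^{\otimes r}\otimes \mathscr P$ gives that $K_Y+D$ is big. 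So the fix for your gap is Mori's Proposition 1.14 together with this bookkeeping of the $\omega_{(Y, D)}$-twist, not a saturation argument inside the relative pushforward.
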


\begin{rem}\label{p-rem1.6}
In Theorem \ref{p-thm1.5}, it is well known that 
\begin{equation}
\kappa (X, K_X+E)=\kappa (F, (K_X+E)|_F)+\dim Y
\end{equation} 
holds 
under the assumption that $\kappa (Y, K_Y+D)=\dim Y$. 
This is due to Maehara (see \cite{maehara} and 
\cite{fujino-notes}). 
Hence the opposite implication is new and nontrivial. 
\end{rem}

Corollary \ref{p-cor1.7} is an obvious consequence of 
Theorem \ref{p-thm1.5}.
 
\begin{cor}\label{p-cor1.7} 
Let $f\colon X\to Y$, $D$, and $E$ be as in Theorem \ref{p-thm1.5}, 
that is, $f^{-1}(D)\subset E$, $f$ is smooth over $Y\setminus D$,  
and $E$ is relatively normal crossing over $Y\setminus D$. 
If $\kappa (X, K_X+E)=\dim X$, then 
$\kappa (Y, K_Y+D)=\dim Y$ and $\kappa (F, (K_X+E)|_F)=\dim F$ hold, 
where $F$ is a general fiber of $f\colon X\to Y$. 
\end{cor}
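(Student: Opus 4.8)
The plan is to deduce this corollary directly from Theorem \ref{p-thm1.5} together with Iitaka's easy addition theorem. The key observation is that the hypothesis $\kappa(X, K_X+E)=\dim X$ forces, via easy addition, that the restricted Kodaira dimension on a general fiber is maximal; once this is known, the extra hypothesis of Theorem \ref{p-thm1.5} is automatically satisfied, and the nontrivial equivalence there produces the assertion about the base.

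First I would treat the fiber. Writing $F$ for a general fiber of $f$, Iitaka's easy addition theorem gives
\begin{equation}
\kappa(X, K_X+E)\leq \kappa(F, (K_X+E)|_F)+\dim Y.
\end{equation}
Since by assumption $\kappa(X, K_X+E)=\dim X=\dim F+\dim Y$, this yields $\kappa(F, (K_X+E)|_F)\geq \dim F$. As the Kodaira dimension of any divisor on $F$ is bounded above by $\dim F$, we conclude $\kappa(F, (K_X+E)|_F)=\dim F$. This already proves the second assertion, and in particular it shows $\kappa(F, (K_X+E)|_F)\geq 0$, so that the extra hypothesis needed to invoke Theorem \ref{p-thm1.5} is indeed met.

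It then remains to apply Theorem \ref{p-thm1.5}. With $\kappa(F, (K_X+E)|_F)=\dim F$ in hand, we compute
\begin{equation}
\kappa(F, (K_X+E)|_F)+\dim Y=\dim F+\dim Y=\dim X=\kappa(X, K_X+E),
\end{equation}
so the equality appearing on the right-hand side of the equivalence in Theorem \ref{p-thm1.5} holds. The ``if'' direction of that equivalence then gives $\kappa(Y, K_Y+D)=\dim Y$, which completes the proof.

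I do not expect a genuine obstacle here: this really is an ``obvious consequence'' as claimed. The only points requiring any care are checking that the setup of the corollary matches the hypotheses of Theorem \ref{p-thm1.5}---in particular that $f$ has connected general fibers, so that easy addition applies in the stated form, and that the condition $\kappa(F, (K_X+E)|_F)\geq 0$ is not being assumed but is derived in the first step---and recalling that $K_X+E$ is an integral divisor, so that all the Kodaira dimensions involved are the usual ones.
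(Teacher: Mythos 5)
Your proposal is correct and follows essentially the same route as the paper: the paper's proof likewise applies the easy addition formula to force $\kappa(F,(K_X+E)|_F)=\dim F$ and hence $\kappa(X,K_X+E)=\kappa(F,(K_X+E)|_F)+\dim Y$, and then invokes the nontrivial direction of Theorem \ref{p-thm1.5} to conclude $\kappa(Y,K_Y+D)=\dim Y$. Your version simply spells out the details (the bound $\kappa\leq\dim$, the verification of the hypothesis $\kappa(F,(K_X+E)|_F)\geq 0$, and connectedness of fibers) that the paper leaves implicit.
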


\begin{thm}[{see \cite[Theorem 1.7 (1)]{park}}]\label{p-thm1.8}
Let $f\colon X\to Y$ be a surjective morphism of 
smooth projective varieties and let $E$ and $D$ be 
simple normal crossing divisors on $X$ and $Y$, respectively. 
Assume that $f$ is smooth over $Y\setminus D$, 
$E$ is relatively normal crossing over $Y\setminus D$, 
and $f^{-1}(D)\subset E$. 
In this situation, if 
$\kappa (X, K_X+E-\varepsilon f^*D)\geq 0$ holds 
for some positive rational 
number $\varepsilon$, then there exists 
some positive rational number $\delta$ such that 
$K_Y+(1-\delta)D$ is pseudo-effective.   
\end{thm}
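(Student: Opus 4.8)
The plan is to feed the hypothesis into Theorem \ref{p-thm1.4} and then convert the resulting homomorphism into a tensor power of $\Omega^1_Y(\log D)$ into pseudo-effectivity on the base, using the generic semi-positivity of the logarithmic cotangent sheaf. First I would reformulate the hypothesis. Choose a positive integer $N$, divisible enough that $N\varepsilon\in\mathbb Z$ and that $N(K_Y+(1-\varepsilon)D)$ is Cartier, and take a nonzero section of $\mathscr O_X(N(K_X+E-\varepsilon f^*D))$ coming from $\kappa(X,K_X+E-\varepsilon f^*D)\ge 0$. The identity
\begin{equation}
\mathscr O_X\bigl(N(K_X+E-\varepsilon f^*D)\bigr)\cong\omega_{(X,E)/(Y,D)}^{\otimes N}\otimes f^*\mathscr O_Y\bigl(N(K_Y+(1-\varepsilon)D)\bigr)
\end{equation}
together with the projection formula turns this section into a nonzero section of $f_*\bigl(\omega_{(X,E)/(Y,D)}^{\otimes N}\bigr)\otimes\mathscr O_Y(N(K_Y+(1-\varepsilon)D))$, hence into a nonzero homomorphism $\mathscr L^{\otimes N}\to\bigl(f_*\omega_{(X,E)/(Y,D)}^{\otimes N}\bigr)^{\vee\vee}$, where $\mathscr L$ is the $\mathbb Q$-line bundle determined by $\mathscr L^{\otimes N}=\mathscr O_Y(-N(K_Y+(1-\varepsilon)D))$, i.e.\ $c_1(\mathscr L)=-(K_Y+(1-\varepsilon)D)$. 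Theorem \ref{p-thm1.4} is literally stated for an honest line bundle, but only $\mathscr L^{\otimes N}$ and, in its conclusion, $\mathscr L^{\otimes r}$ occur, so I may take $r$ divisible by $N$ and work throughout with the genuine line bundle $\mathscr L^{\otimes N}$.

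Then I would apply Theorem \ref{p-thm1.4}, obtaining a pseudo-effective line bundle $\mathscr P$, integers $r>0$ and $k\ge 0$, and a nonzero homomorphism $\mathscr L^{\otimes r}\otimes\mathscr P\to(\Omega^1_Y(\log D))^{\otimes kr}$. The key rewriting is $c_1(\mathscr L)=-(K_Y+D)+\varepsilon D$. Since $D$ is effective, $\varepsilon D$ is pseudo-effective, so $\mathscr Q:=\mathscr O_Y(r\varepsilon D)\otimes\mathscr P$ is pseudo-effective, and the map becomes a nonzero homomorphism
\begin{equation}
\mathscr O_Y\bigl(-r(K_Y+D)\bigr)\otimes\mathscr Q\longrightarrow(\Omega^1_Y(\log D))^{\otimes kr},\qquad c_1(\mathscr Q)\cdot\alpha\ge r\varepsilon\,(D\cdot\alpha)
\end{equation}
for every movable class $\alpha$. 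As the source is a line bundle and the target is torsion-free, this homomorphism is injective, so the source is a rank-one subsheaf.

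Finally I would invoke the generic semi-positivity of the logarithmic cotangent sheaf (Campana--Paun). Testing the inclusion against a movable class $\alpha$, the slope of the subsheaf is bounded by $kr\,\mu^{\max}_\alpha(\Omega^1_Y(\log D))$, so with $c_1(\mathscr Q)\cdot\alpha\ge r\varepsilon\,(D\cdot\alpha)$ I get
\begin{equation}
-(K_Y+D)\cdot\alpha+\varepsilon\,(D\cdot\alpha)\le k\,\mu^{\max}_\alpha\bigl(\Omega^1_Y(\log D)\bigr).
\end{equation}
The Campana--Paun theorem bounds $\mu^{\max}_\alpha(\Omega^1_Y(\log D))$ along movable $\alpha$ in terms of $(K_Y+D)\cdot\alpha$; feeding this in and using $D\cdot\alpha\ge 0$, I would deduce $(K_Y+(1-\delta)D)\cdot\alpha\ge 0$, uniformly in $\alpha$, for some $\delta>0$ of the shape $\delta=\varepsilon/(k+1)$. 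By the duality between the pseudo-effective and movable cones (Boucksom--Demailly--Paun--Peternell), this is exactly the pseudo-effectivity of $K_Y+(1-\delta)D$.

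The hard part will be this last step. One needs the precise logarithmic Campana--Paun bound on the maximal slope of $\Omega^1_Y(\log D)$ along movable classes, and one must track how the \emph{reduced} boundary $D$ appearing in $\Omega^1_Y(\log D)$ interacts with the fractional coefficient $(1-\delta)$ in the conclusion. It is exactly this mismatch --- reduced $D$ in the cotangent sheaf against a boundary of coefficient strictly less than $1$ --- that forces the loss $\delta>0$ and consumes the room supplied by $\varepsilon$; absorbing the effective twist $\varepsilon D$ into $\mathscr Q$ is what makes the numerology close, and attempting $\delta=0$ would require a strictly stronger positivity statement. A secondary, purely formal obstacle is the $\mathbb Q$-line bundle issue in applying Theorem \ref{p-thm1.4}, which I handle by arranging $N\mid r$ as above.
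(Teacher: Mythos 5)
Your plan has the same skeleton as the paper's proof (which follows Park's proof of his Theorem 1.7\,(1)): reformulate the hypothesis as a nonzero map into $f_*\omega_{(X,E)/(Y,D)}^{\otimes N}$, apply Theorem \ref{p-thm1.4}, and close with the Campana--P\u{a}un bound on slopes of rank-one subsheaves of $(\Omega^1_Y(\log D))^{\otimes m}$ together with the duality between the pseudo-effective and movable cones; your $\varepsilon$-bookkeeping, giving $\delta$ of the shape $\varepsilon/(k+1)$, is also sound. The genuine gap is the step you dismiss as ``purely formal'': Theorem \ref{p-thm1.4} cannot be applied to your $\mathbb Q$-line bundle $\mathscr L$ with $c_1(\mathscr L)=-(K_Y+(1-\varepsilon)D)$, and your workaround does not repair this. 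You cannot ``take $r$ divisible by $N$'': in Lemma \ref{p-lem4.4}, which produces the conclusion of Theorem \ref{p-thm1.4}, the integer $r$ is an \emph{output} --- the rank of the image sheaf $\mathscr Q\subset(\Omega^1_Y(\log D))^{\otimes k}\otimes\mathscr L^{\otimes -1}$ --- over which you have no control. More fundamentally, the proof of Theorem \ref{p-thm1.4} (through Theorem \ref{p-thm4.2}) uses the honest line bundle $\mathscr L$ itself, not merely its $N$-th power: it forms a cyclic cover from a section of a power of $\mathscr N=\omega_{(X,E)/(Y,D)}\otimes f^*\mathscr L^{\otimes -1}$, takes direct images twisted by $\mathscr N^{\otimes -1}$, and builds a Higgs sheaf with $\mathscr L\subset\mathscr F_0$; none of these constructions makes sense when $\mathscr L$ and $\mathscr N$ are fractional. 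What you actually possess is a nonzero map $M\to f_*\omega_{(X,E)/(Y,D)}^{\otimes N}$ for the genuine line bundle $M=\mathscr O_Y(-N(K_Y+(1-\varepsilon)D))$, whose exponents ($1$ on the left, $N$ on the right) do not match the hypothesis of Theorem \ref{p-thm1.4}. So the obstacle you rank as secondary is in fact the main one, while the final Campana--P\u{a}un step you worry about is exactly the black box the paper also cites (Park's Theorem 3.9).

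The missing idea is Park's fiber product trick, Corollary \ref{p-cor1.2}, which is what the paper's proof means by ``as in the proof of Theorem \ref{p-thm1.5}'': from the inclusion $M\subset f_*\omega_{(X,E)/(Y,D)}^{\otimes N}$ one obtains a nonzero homomorphism $M^{\otimes N}\to \bigl(\bigotimes^N f_*\omega_{(X,E)/(Y,D)}^{\otimes N}\bigr)^{\vee\vee}\hookrightarrow \bigl(f^{(N)}_*\omega_{(X^{(N)},E^{(N)})/(Y,D)}^{\otimes N}\bigr)^{\vee\vee}$, and Theorem \ref{p-thm1.4} then applies to the honest bundle $M$ and the morphism $f^{(N)}\colon X^{(N)}\to Y$, which satisfies all the standing hypotheses by Corollary \ref{p-cor1.2}. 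With this repair the rest of your argument runs as you wrote it, yielding $\delta$ of order $N\varepsilon/(N+k)$. For comparison, the paper instead works with the honest bundle $\mathscr L=\mathscr O_Y(D)\otimes\omega_{(Y,D)}^{\otimes -N}$, using $N\varepsilon\ge 1$ to discard the excess $(N\varepsilon-1)f^*D$, so that the room for $\delta$ comes from the coefficient-one copy of $D$ rather than from $\varepsilon D$; both bookkeepings close the numerics once Theorem \ref{p-thm1.4} has been legitimately invoked.
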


If $f^{-1}(D)=E$ in Theorems \ref{p-thm1.5} and 
\ref{p-thm1.8}, then they are nothing but \cite[Theorem 1.5]{park} and 
\cite[Theorem 1.7]{park}, respectively. 
In Theorem \ref{p-thm1.8}, if $\kappa (X, K_X+E)\geq 0$, then 
we can prove that $K_Y+D$ is pseudo-effective without 
using Theorem \ref{p-thm1.4}. 
It will be treated in Theorem \ref{p-thm4.5}. 

Let us consider a conjecture on 
the behavior of the (logarithmic) Kodaira dimension 
under smooth morphisms. 

\begin{conj}[{see \cite[Conjecture 3.6]{popa} and 
\cite[Conjecture 5.1]{park}}]\label{p-conj1.9}
Let $f\colon X\to Y$ be a surjective morphism of 
smooth projective varieties with connected fibers. 
Let $E$ and $D$ be simple normal crossing divisors on 
$X$ and $Y$, respectively. 
Assume that $f$ is smooth over $Y\setminus D$, 
$E$ is relatively normal crossing over $Y\setminus D$, 
and $f^{-1}(D)\subset E$. 
Then 
\begin{equation}
\kappa (X, K_X+E)=\kappa (Y, K_Y+D)+\kappa (F, (K_X+E)|_F)
\end{equation} 
holds, where $F$ is a sufficiently general fiber of $f\colon X\to Y$. 
\end{conj}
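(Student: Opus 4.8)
The plan is to reduce the conjecture to the single case in which $(Y,K_Y+D)$ is of log general type, which is already settled by Theorem~\ref{p-thm1.5}, and to carry out this reduction by induction on $\dim Y$ using the Iitaka fibration of the log pair $(Y,K_Y+D)$. I adopt the convention that $-\infty$ added to anything is $-\infty$, so the two degenerate cases are disposed of first. If $\kappa(F,(K_X+E)|_F)=-\infty$, then restricting pluri-log-canonical forms to a general fiber forces every section of $N(K_X+E)$ to vanish on the generic fiber and hence identically, so $\kappa(X,K_X+E)=-\infty$, matching the right-hand side. If $\kappa(Y,K_Y+D)=-\infty$, one wants $\kappa(X,K_X+E)=-\infty$ as well; Theorem~\ref{p-thm4.5} shows that $\kappa(X,K_X+E)\geq 0$ makes $K_Y+D$ pseudo-effective, so upgrading pseudo-effectivity to $\kappa(Y,K_Y+D)\geq 0$ via a log nonvanishing statement would close this case. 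I record that this upgrade already requires abundance-type input and flag it as auxiliary.

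Assume now $0\leq \kappa(Y,K_Y+D)=m\leq \dim Y$ and $\kappa(F,(K_X+E)|_F)\geq 0$. Let $\rho\colon Y\dashrightarrow Z$ be the Iitaka fibration of $(Y,K_Y+D)$. After a log resolution and a flattening, chosen compatibly so that the simple normal crossing and relative normal crossing hypotheses of Corollary~\ref{p-cor1.2} and Theorem~\ref{p-thm1.5} survive, I may take $\rho$ to be a morphism of smooth projective varieties with $\dim Z=m$ whose general fiber $Y_z$ satisfies $\kappa\!\left(Y_z,(K_Y+D)|_{Y_z}\right)=0$. The base of the induction is $\dim Y=0$, which is trivial, while $m=\dim Y$ is exactly Theorem~\ref{p-thm1.5}; so the content is the range $0\leq m<\dim Y$.

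For the lower bound $\kappa(X,K_X+E)\geq \kappa(Y,K_Y+D)+\kappa(F,(K_X+E)|_F)$, the input is the weak positivity of $f_*\omega_{(X,E)/(Y,D)}^{\otimes N}$ coming from the variation of mixed Hodge structure underlying Theorem~\ref{p-thm1.4}, together with Corollary~\ref{p-cor1.2}. Viehweg's covering and fiber-product technique should then propagate the log-general-type case (Theorem~\ref{p-thm1.5}, applied over $Z$) to the Iitaka base $Z$: twisting the weakly positive direct image by a sufficiently ample line bundle on $Z$ manufactures enough pluricanonical sections, while the fiberwise contribution over $Y_z$ supplies the summand $\kappa(F,(K_X+E)|_F)$. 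For the upper bound, easy addition gives only $\kappa(X,K_X+E)\leq \kappa(F,(K_X+E)|_F)+\dim Y$; to sharpen $\dim Y$ to $m$ I would restrict to $X_z:=f^{-1}(Y_z)$ and, when $m>0$ so that $\dim Y_z<\dim Y$, apply the induction hypothesis to $f\colon X_z\to Y_z$ to get $\kappa\!\left(X_z,(K_X+E)|_{X_z}\right)=\kappa(F,(K_X+E)|_F)$; easy addition for $X\to Z$ then yields the bound with $\dim Z=m$, and the two bounds meet. Throughout, Theorem~\ref{p-thm1.4} controls the discrepancy between the positivity of $f_*\omega^{\otimes N}_{(X,E)/(Y,D)}$ and the log cotangent sheaf $\Omega^1_Y(\log D)$, which is what makes the fiberwise restrictions behave.

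The main obstacle is the case $m=0$, that is $\kappa(Y,K_Y+D)=0$, where the Iitaka fibration is trivial and the induction on $\dim Y$ fails to drop the dimension. Here one must prove $\kappa(X,K_X+E)=\kappa(F,(K_X+E)|_F)$ directly, and neither easy addition (which only gives the weak bound $\kappa(F,(K_X+E)|_F)+\dim Y$) nor Theorem~\ref{p-thm1.5} (which needs a base of log general type) applies. This is the genuine subadditivity statement over a base of log Kodaira dimension zero, the log analogue of the hardest part of Iitaka's conjecture $C_{n,m}$, and closing it appears to require combining the full strength of the weak positivity coming from the log variation of mixed Hodge structure, the control of $\Omega^1_Y(\log D)$ from Theorem~\ref{p-thm1.4}, and an abundance-type nonvanishing input for $(Y,K_Y+D)$. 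I therefore expect the argument to split cleanly into the formal reduction above and this residual $\kappa=0$ case, where the real difficulty concentrates.
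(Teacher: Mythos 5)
Your attempt cannot be completed, and the reason is structural: the statement you are proving is Conjecture \ref{p-conj1.9}, which is \emph{open}, and the paper itself does not prove it. What the paper establishes is: the log general type case (Theorem \ref{p-thm1.5}), superadditivity under an abundance hypothesis (Theorem \ref{p-thm1.12}), the case $Y\setminus D$ affine (Corollary \ref{p-cor1.13}), and the full conjecture \emph{only under the assumption} that the generalized abundance conjecture (Conjecture \ref{p-conj1.10}) holds (Theorem \ref{p-thm1.14}). Your proposal, stripped of the optimistic language, reassembles exactly the skeleton of Theorem \ref{p-thm1.14} while leaving the dependence on Conjecture \ref{p-conj1.10} in place: every point you flag as ``auxiliary'' or ``residual'' is precisely where that conjecture is invoked, so nothing has been removed.

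Concretely, three of your steps fail without abundance-type input. First, the case $\kappa(Y,K_Y+D)=-\infty$: Theorem \ref{p-thm4.5} only gives pseudo-effectivity of $K_Y+D$ from $\kappa(X,K_X+E)\geq 0$, and upgrading this to $\kappa(Y,K_Y+D)\geq 0$ is the nonvanishing conjecture (Conjecture \ref{p-conj1.11}), itself a special case of Conjecture \ref{p-conj1.10}. Second, your ``lower bound'' (the paper's subadditivity $\kappa(X,K_X+E)\geq\kappa(Y,K_Y+D)+\kappa(F,(K_X+E)|_F)$) does not follow from weak positivity of $f_*\omega_{(X,E)/(Y,D)}^{\otimes N}$ plus Viehweg's fiber-product trick when the base is not of log general type; the known proofs require $\kappa_\sigma=\kappa$ on $X$ or on $F$ (see \cite{fujino-subadditivity}, \cite{fujino-corrigendum}, \cite{hashizume}), which is again Conjecture \ref{p-conj1.10}. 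Third, and most importantly, the residual case $\kappa(Y,K_Y+D)=0$: what the paper can prove unconditionally is Lemma \ref{q-lem5.1}, which assumes $\kappa_\sigma(Y,K_Y+D)=0$, i.e.\ \emph{numerical} dimension zero; bridging $\kappa(Y,K_Y+D)=0$ to $\kappa_\sigma(Y,K_Y+D)=0$ is exactly generalized abundance, and this is why Theorem \ref{p-thm1.12} assumes that conjecture for the fibers of the Iitaka fibration. Your induction for $0<m<\dim Y$ also bottoms out at this $m=0$ case (the induction hypothesis you invoke for $f\colon X_z\to Y_z$ is the conjecture over a base with $\kappa=0$), so it does not close either. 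You correctly located where the difficulty concentrates, but what sits there is not a technical step to be finished by the paper's methods; it is the open conjecture itself.
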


For the classical Iitaka 
subadditivity conjecture and some related topics, see \cite{fujino-iitaka}. 
For the details of relatively new conjectures on the behavior of 
the Kodaira dimension under morphisms 
of smooth complex varieties, see \cite{popa} and 
the references therein. 

We explain some related conjectures. 
Conjecture \ref{p-conj1.10} is a special 
case of the generalized abundance conjecture, which is 
one of the most important conjectures in the 
theory of minimal models. For the details 
of Conjecture \ref{p-conj1.10}, 
see \cite[Section 4.1]{fujino-foundations}. 

\begin{conj}[Generalized abundance conjecture for 
projective smooth pairs]\label{p-conj1.10}
Let $X$ be a smooth projective variety and let $E$ be 
a simple normal crossing divisor on $X$. 
Then 
\begin{equation}
\kappa (X, K_X+E)=\kappa_\sigma(X, K_X+E)
\end{equation} 
holds, where $\kappa _\sigma$ denotes 
Nakayama's numerical dimension.  
\end{conj}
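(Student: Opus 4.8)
The plan is to reduce the asserted identity to the log abundance conjecture through the minimal model program. One inequality is unconditional: by Nakayama's theory of the numerical dimension one always has
\[
\kappa(X, K_X+E)\le \kappa_\sigma(X, K_X+E),
\]
so the whole content lies in the reverse inequality. If $K_X+E$ is not pseudo-effective, then by convention $\kappa_\sigma(X,K_X+E)=-\infty$, and since no multiple of $K_X+E$ can then be effective we also have $\kappa(X,K_X+E)=-\infty$; thus we may assume that $K_X+E$ is pseudo-effective and reduce to proving $\kappa(X,K_X+E)\ge \kappa_\sigma(X,K_X+E)$.

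Because $(X,E)$ is log smooth, it is a dlt pair, and I would run a $(K_X+E)$-minimal model program. Granting the existence and termination of this program for pseudo-effective dlt pairs---unconditional in dimension at most three and, for big boundaries, a consequence of Birkar--Cascini--Hacon--McKernan---it terminates with a birational contraction $\phi\colon (X,E)\dashrightarrow (X',E')$ to a minimal model on which $K_{X'}+E'$ is nef. Each step is $(K_X+E)$-negative, so on a common resolution the pullbacks of $K_X+E$ and of $K_{X'}+E'$ differ by an effective exceptional divisor; consequently both the Iitaka dimension and Nakayama's numerical dimension of the log canonical divisor are preserved by $\phi$. It therefore suffices to prove the equality on the minimal model, namely
\[
\kappa(X',K_{X'}+E')=\kappa_\sigma(X',K_{X'}+E').
\]
On the minimal model the nefness of $K_{X'}+E'$ identifies the right-hand side with the usual numerical dimension: $\kappa_\sigma(X',K_{X'}+E')=\nu(K_{X'}+E')$, the largest integer $k$ with $(K_{X'}+E')^{k}\cdot H^{\dim X'-k}>0$ for an ample divisor $H$. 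The decisive remaining step is to prove that, for the log canonical pair $(X',E')$, the nef divisor $K_{X'}+E'$ is semiample; semiampleness at once gives $\kappa(X',K_{X'}+E')=\nu(K_{X'}+E')=\kappa_\sigma(X',K_{X'}+E')$ and finishes the reduction.

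The hard part is precisely this last step, which is the log abundance conjecture and is open in general; I do not expect to settle it here. The proposal is nonetheless meaningful because the surrounding reductions are available in important cases: abundance for log canonical pairs is a theorem in dimension at most three, the big case $\kappa_\sigma=\dim X$ is immediate, and the extremal case $\kappa_\sigma=0$---where one must show that a pseudo-effective $K_X+E$ of vanishing numerical dimension satisfies $\kappa=0$---is known from the numerically trivial abundance results of Nakayama and their logarithmic extensions. In short, modulo the standard existence-and-termination package of the minimal model program, the statement is equivalent to log abundance for log canonical pairs, and it is this equivalence that I would make precise and exploit.
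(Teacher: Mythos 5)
The statement you were asked to prove is not a theorem of the paper: it is Conjecture \ref{p-conj1.10} there, the paper explicitly remarks that it ``is still widely open,'' and no proof of it is offered anywhere in the text. The closest the paper comes is the special case recorded as Theorem \ref{q-thm5.2}, where $X\setminus E$ admits a projective birational morphism onto an affine variety; that case is proved by the minimal model program, in the same spirit as your reduction. So there is no proof in the paper against which to compare yours, and every main result that invokes Conjecture \ref{p-conj1.10} (for instance Theorems \ref{p-thm1.12} and \ref{p-thm1.14}) assumes it as a hypothesis rather than establishes it.

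Your proposal, as you yourself acknowledge, is also not a proof, and the gap is not a fixable one. The unconditional inequality $\kappa\le\kappa_\sigma$, the reduction to the pseudo-effective case, and the passage to a minimal model (granting existence and termination of the $(K_X+E)$-MMP, which is itself open for pseudo-effective non-big pairs in dimension at least four) are all standard and correctly stated; likewise the identification of $\kappa_\sigma$ with the numerical dimension $\nu$ on a nef minimal model. But the decisive step---semiampleness of the nef log canonical divisor on the minimal model---is the log abundance conjecture, open in dimension at least four, so your argument trades one open conjecture for two others. This reduction is precisely why the statement is called the generalized abundance conjecture, and making the equivalence precise has genuine expository value, but it does not prove the statement; to be clear-eyed about the logical status, what you have written is a conditional reduction, and under the stated rubric it must be classified as having a genuine, and at present unavoidable, gap.
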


Conjecture \ref{p-conj1.10} is still widely open. 
Conjecture \ref{p-conj1.10} contains 
Conjecture \ref{p-conj1.11} as a special case. 
For the details of the nonvanishing conjecture, 
see, for example, \cite[Section 4.8]{fujino-foundations} and 
\cite{hashizume-non}. 

\begin{conj}[Nonvanishing conjecture for projective smooth 
pairs]\label{p-conj1.11} 
Let $X$ be a smooth projective variety and let $E$ be 
a simple normal crossing divisor on $X$. 
Assume that $K_X+E$ is pseudo-effective. 
Then 
\begin{equation}
\kappa (X, K_X+E)\geq 0 
\end{equation} 
holds. 
\end{conj}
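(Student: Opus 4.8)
The natural plan is to reduce the statement to the log minimal model program together with the log abundance conjecture. Since $X$ is smooth and $E$ is reduced, the pair $(X,E)$ is log smooth, hence log canonical, and $K_X+E$ is its log canonical divisor. Assuming the existence and termination of a $(K_X+E)$-minimal model program, I would run such a program; because $K_X+E$ is pseudo-effective, no step can terminate with a Mori fiber space, so the output is a log minimal model $(X',E')$ on which $K_{X'}+E'$ is nef. As the Kodaira dimension of the pair is preserved under the divisorial contractions and flips of this program, it suffices to prove $\kappa(X',K_{X'}+E')\geq 0$ for the nef divisor $K_{X'}+E'$; that is, the problem is reduced to the nonvanishing (weak abundance) statement on the minimal model.

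A second, more intrinsic route is through Nakayama's numerical dimension, and it explains the relationship with Conjecture \ref{p-conj1.10}. Since $K_X+E$ is pseudo-effective we have $\kappa_\sigma(X,K_X+E)\geq 0$, so the desired inequality would follow at once from the generalized abundance equality $\kappa=\kappa_\sigma$. Along the way I would first dispose of the two extremal values of $\kappa_\sigma$. If $\kappa_\sigma(X,K_X+E)=\dim X$, then $K_X+E$ is big and $\kappa(X,K_X+E)=\dim X\geq 0$. If $\kappa_\sigma(X,K_X+E)=0$, then the conclusion is known: by Nakayama's abundance theorem for numerical dimension zero (and its extensions to log canonical pairs) a pseudo-effective log canonical divisor of numerical dimension zero already satisfies $\kappa\geq 0$. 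The genuinely difficult range is $0<\kappa_\sigma(X,K_X+E)<\dim X$, where one must convert numerical positivity into an actual nonzero section.

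I expect this conversion to be the essential obstacle. In low dimensions it can be carried out: for $\dim X\leq 3$ the statement holds because log abundance for log canonical pairs is a theorem in that range, which together with the big case exhausts all possibilities. In general, however, both the termination of the log minimal model program and the abundance step on the resulting nef model remain open from dimension four onward, and the nonvanishing asserted here is essentially equivalent to the log abundance conjecture. This is precisely why the statement is recorded as a conjecture rather than proved; a complete argument would require genuinely new input beyond the positivity-of-direct-images and Hodge-theoretic techniques used elsewhere in this paper.
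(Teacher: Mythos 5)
You have correctly identified the essential point: this statement is recorded in the paper as a \emph{conjecture} (Conjecture \ref{p-conj1.11}), and the paper offers no proof of it, so there is nothing to compare your argument against. Your assessment of its status is accurate and consistent with how the paper frames it. In particular, your second route is exactly the paper's own framing: since pseudo-effectivity gives $\kappa_\sigma(X,K_X+E)\geq 0$, the generalized abundance equality $\kappa=\kappa_\sigma$ of Conjecture \ref{p-conj1.10} immediately yields $\kappa(X,K_X+E)\geq 0$, which is why the paper says Conjecture \ref{p-conj1.10} ``contains Conjecture \ref{p-conj1.11} as a special case,'' and this is precisely the implication invoked in the proof of Theorem \ref{p-thm1.14}. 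Your partial results are also correct: the big case is trivial, the case $\kappa_\sigma=0$ follows from Nakayama-type abundance for numerical dimension zero, log abundance is known in dimension at most three, and the paper itself records (Theorem \ref{q-thm5.2}, due to Gongyo/Fujino) that generalized abundance --- hence nonvanishing --- holds when $X\setminus E$ admits a projective birational morphism onto an affine variety. Your MMP reduction (no Mori fiber space output by pseudo-effectivity, invariance of $\kappa$ under $(K_X+E)$-negative contractions and flips, reduction to nonvanishing on a nef model) is the standard and correct reduction, and your conclusion that the remaining step is genuinely open from dimension four onward, requiring input beyond the Hodge-theoretic and positivity techniques of this paper, is exactly right.
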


On Conjecture \ref{p-conj1.9}, we have a 
partial result, which is obviously a 
generalization of \cite[Theorem 1.12]{park}. 

\begin{thm}[{Superadditivity, 
see \cite[Theorem 1.12]{park}}]\label{p-thm1.12}
Let $f\colon X\to Y$ be a surjective morphism of 
smooth projective varieties with connected fibers. 
Let $E$ and $D$ be simple normal crossing divisors on 
$X$ and $Y$, respectively. 
Assume that $f$ is smooth over $Y\setminus D$, 
$E$ is relatively normal crossing over $Y\setminus D$, 
and $f^{-1}(D)\subset E$. 
We further assume that $\kappa (Y, K_Y+D)\geq 0$ and 
that the generalized abundance conjecture holds for sufficiently 
general fibers of the Iitaka fibration of $Y$ with respect 
to $K_Y+D$. 
Then 
\begin{equation}
\kappa (X, K_X+E)\leq \kappa (Y, K_Y+D)+\kappa (F, (K_X+E)|_F)
\end{equation} 
holds, where $F$ is a sufficiently general fiber of $f\colon X\to Y$. 
\end{thm}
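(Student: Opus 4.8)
The plan is to reduce, via the Iitaka fibration of $(Y,K_Y+D)$ together with the easy addition theorem, to the case in which $K_Y+D$ is numerically trivial, and then to settle that base case using Corollary \ref{p-cor1.2} and Theorem \ref{p-thm1.4}. First I would take the Iitaka fibration $\pi\colon Y\dashrightarrow Z$ of $K_Y+D$ and, after replacing $X$ and $Y$ by suitable birational models (resolving $\pi$ to a morphism of smooth projective varieties while preserving the simple normal crossing, relative normal crossing, and $f^{-1}(D)\subset E$ conditions, none of which changes any of the Kodaira dimensions involved), assume that $\pi$ is a morphism with $\dim Z=\kappa(Y,K_Y+D)$. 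For a sufficiently general point $z\in Z$ the fibre $Y_z$ satisfies $\kappa(Y_z,(K_Y+D)|_{Y_z})=0$ by the defining property of the Iitaka fibration, and the hypothesis that the generalized abundance conjecture holds for $Y_z$ upgrades this to $\kappa_\sigma(Y_z,(K_Y+D)|_{Y_z})=0$.

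Applying the easy addition theorem to $h:=\pi\circ f\colon X\to Z$ gives
\[
\kappa(X,K_X+E)\le \kappa(X_z,(K_X+E)|_{X_z})+\dim Z=\kappa(X_z,(K_X+E)|_{X_z})+\kappa(Y,K_Y+D),
\]
where $X_z=h^{-1}(z)=f^{-1}(Y_z)$. For general $z$ the subvariety $X_z$ is smooth, $E_z:=E|_{X_z}$ is simple normal crossing, and adjunction (choosing a representative of $K_Z$ away from $z$) gives $(K_X+E)|_{X_z}=K_{X_z}+E_z$. The restriction $f_z\colon X_z\to Y_z$ of $f$ again satisfies all the hypotheses, its general fibre is a general fibre $F$ of $f$, and $(K_{X_z}+E_z)|_F=(K_X+E)|_F$. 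Thus it suffices to treat the \emph{base case}: when $\kappa_\sigma(Y,K_Y+D)=0$, show that $\kappa(X,K_X+E)\le\kappa(F,(K_X+E)|_F)$.

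For the base case I would argue by contradiction, starting from the identity
\[
h^0(X,m(K_X+E))=h^0\big(Y,\ \mathscr F_m\otimes\mathcal O_Y(m(K_Y+D))\big),\qquad \mathscr F_m:=f_*\omega_{(X,E)/(Y,D)}^{\otimes m},
\]
valid since $f$ has connected fibres, where $\mathscr F_m$ is torsion-free, weakly positive in the sense of Viehweg--Fujino, and of rank the log plurigenus $h^0(F,m(K_X+E)|_F)\sim m^{\kappa(F,(K_X+E)|_F)}$. If $\kappa(X,K_X+E)>\kappa(F,(K_X+E)|_F)$, then the left-hand side grows strictly faster than $\rank\mathscr F_m$, which (using weak positivity to split off the positive part) should force a saturated line subsheaf $\mathscr L\subset\mathscr F_N^{\vee\vee}$, for some $N$ with $\mathscr L^{\otimes N}\hookrightarrow\mathscr F_N^{\vee\vee}$, that is numerically nontrivial, i.e. $\kappa_\sigma(Y,\mathscr L)>0$ after the twist by $\mathcal O_Y(N(K_Y+D))$. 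I would then amplify $\mathscr L$ through the logarithmic fiber product trick: by Corollary \ref{p-cor1.2}, $(\mathscr L^{\otimes s})^{\otimes N}\hookrightarrow(\bigotimes^s\mathscr F_N)^{\vee\vee}\hookrightarrow\big(f^{(s)}_*\omega_{(X^{(s)},E^{(s)})/(Y,D)}^{\otimes N}\big)^{\vee\vee}$, and apply Theorem \ref{p-thm1.4} to $f^{(s)}$ and the line bundle $\mathscr L^{\otimes s}$ to obtain a pseudo-effective line bundle $\mathscr P$ and a nonzero homomorphism $\mathscr L^{\otimes sr}\otimes\mathscr P\to(\Omega^1_Y(\log D))^{\otimes kr}$.

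The final step is to convert this last homomorphism into the numerical bound $\kappa_\sigma(Y,\mathscr L)\le \kappa_\sigma(Y,K_Y+D)=0$, contradicting the previous paragraph and thereby forcing $\kappa(X,K_X+E)\le\kappa(F,(K_X+E)|_F)$. This is where I expect the main difficulty to lie: one must show that a line subsheaf realized, after twisting by a pseudo-effective bundle, inside a tensor power of $\Omega^1_Y(\log D)$ cannot be numerically more positive than is permitted by $K_Y+D=\det\Omega^1_Y(\log D)$ — in other words, that the positive part of the logarithmic cotangent bundle is governed by the numerical dimension of $K_Y+D$. I would establish this through the semipositivity and generic nefness properties of $\Omega^1_Y(\log D)$ when $K_Y+D$ is pseudo-effective (of Campana--P\u{a}un / Miyaoka type), transporting the estimate down along the map produced by Theorem \ref{p-thm1.4}; the delicate points are absorbing the pseudo-effective twist $\mathscr P$ and controlling the exponents $s,r,k$ uniformly so that the conclusion $\kappa_\sigma(Y,\mathscr L)=0$ is genuinely forced. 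Granting this, the base case follows, and combined with the reduction above we obtain $\kappa(X,K_X+E)\le\kappa(Y,K_Y+D)+\kappa(F,(K_X+E)|_F)$.
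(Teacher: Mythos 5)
Your first paragraph and the easy-addition step reproduce the paper's own reduction exactly: the paper likewise passes to a birational model on which the Iitaka fibration $\Phi$ of $K_Y+D$ is a morphism, uses the generalized abundance hypothesis to get $\kappa_\sigma(G,K_G+D|_G)=\kappa(G,K_G+D|_G)=0$ on a sufficiently general fiber $G$ of $\Phi$, applies the $\kappa_\sigma=0$ case to the restricted morphism $f|_H\colon H\to G$, and concludes by easy addition for $\Phi\circ f\colon X\to Z$. That part of your proposal is correct. The paper isolates your ``base case'' as Lemma \ref{q-lem5.1}, and proves it by checking that the proof of \cite[Proposition 5.2]{park} goes through under the weaker hypothesis $f^{-1}(D)\subset E$.

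The genuine gap is that your argument for the base case is not a proof: it defers exactly the two steps that carry all the weight. First, the extraction step. From the growth comparison $h^0(X,m(K_X+E))\gg \rank \mathscr F_m$ you assert that weak positivity ``should force'' a line subsheaf $\mathscr L$ of $\mathscr F_N^{\vee\vee}$ whose twist by $\mathcal O_Y(N(K_Y+D))$ has positive Iitaka (hence numerical) dimension. No argument is given, and the naive one fails: filtering $\mathscr F_N\otimes\omega_{(Y,D)}^{\otimes N}$ by saturated subsheaves and counting sections produces a rank-one \emph{subquotient} with many sections, not a subsheaf, whereas Corollary \ref{p-cor1.2} and Theorem \ref{p-thm1.4} require a nonzero homomorphism \emph{from} a line bundle \emph{into} $\mathscr F_N^{\vee\vee}$; moreover, weak positivity provides no splittings (the paper's Remark \ref{q-rem2.7} is precisely a warning that weak positivity behaves badly in extensions). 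Second, the numerical step: you need that a nonzero homomorphism $\mathscr L^{\otimes sr}\otimes\mathscr P\to(\Omega^1_Y(\log D))^{\otimes kr}$, with $\mathscr P$ pseudo-effective, forces $\kappa_\sigma(Y,\mathscr L)\le\kappa_\sigma(Y,K_Y+D)=0$, and you explicitly grant this rather than prove it. It does not follow from the Campana--P\u{a}un results quoted elsewhere in the paper (Park's Theorems 3.8 and 3.9, i.e., \cite[Theorems 1.3 and 7.6]{campana-paun}): those produce bigness, respectively pseudo-effectivity, of $K_Y+D$ from such homomorphisms, not an upper bound on the positivity of a line subsheaf of $(\Omega^1_Y(\log D))^{\otimes kr}$ in terms of $\kappa_\sigma(Y,K_Y+D)$. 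Supplying these two steps is precisely the content of \cite[Proposition 5.2]{park}; as written, your proposal is a correct reduction of Theorem \ref{p-thm1.12} to the statement of Lemma \ref{q-lem5.1}, together with a heuristic for that lemma, so the theorem itself remains unproven.
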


We have already known that Conjecture \ref{p-conj1.10} holds 
true when $X\setminus E$ is affine. 
More generally, Conjecture \ref{p-conj1.10} 
holds true under the assumption that 
there exists a projective birational morphism 
$X\setminus E\to V$ onto an affine variety $V$. 
Hence we have: 

\begin{cor}\label{p-cor1.13}
Let $f\colon X\to Y$, $E$, and $D$ be as in Conjecture \ref{p-conj1.9}. 
We assume that $Y\setminus D$ is affine. 
Then the following superadditivity 
\begin{equation}\label{p-eq1.3}
\kappa (X, K_X+E)\leq \kappa (Y, K_Y+D)+\kappa (F, (K_X+E)|_F)
\end{equation} 
holds. 
\end{cor}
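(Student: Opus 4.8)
The plan is to reduce Corollary \ref{p-cor1.13} to the superadditivity Theorem \ref{p-thm1.12}. Over the hypotheses already present in Conjecture \ref{p-conj1.9}, Theorem \ref{p-thm1.12} asks for two extra inputs: that $\kappa(Y, K_Y+D)\geq 0$, and that Conjecture \ref{p-conj1.10} holds for a sufficiently general fiber of the Iitaka fibration of $(Y, D)$ with respect to $K_Y+D$. Accordingly I would argue by cases on the value of $\kappa(Y, K_Y+D)$, using throughout that Conjecture \ref{p-conj1.10} is already known for a smooth projective pair whose open part admits a projective birational morphism onto an affine variety.

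If $\kappa(Y, K_Y+D)=-\infty$, then since $Y\setminus D$ is affine the pair $(Y, D)$ satisfies Conjecture \ref{p-conj1.10}, so $\kappa_\sigma(Y, K_Y+D)=\kappa(Y, K_Y+D)=-\infty$; in other words $K_Y+D$ is not pseudo-effective. By the contrapositive of Theorem \ref{p-thm4.5}, which asserts that $\kappa(X, K_X+E)\geq 0$ forces $K_Y+D$ to be pseudo-effective, we get $\kappa(X, K_X+E)=-\infty$. Hence both sides of \eqref{p-eq1.3} equal $-\infty$ and there is nothing to prove.

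If $\kappa(Y, K_Y+D)\geq 0$, it remains only to check the abundance hypothesis of Theorem \ref{p-thm1.12}. Fix a birational model $g\colon Y^*\to Z$ realizing the Iitaka fibration of $(Y, D)$, with boundary $D^*\supseteq \pi^{-1}(D)$ for $\pi\colon Y^*\to Y$, and let $(G, D_G)$ denote a sufficiently general fiber with its induced boundary. Since both $\kappa(G, K_G+D_G)$ and $\kappa_\sigma(G, K_G+D_G)$ depend only on the birational class of the open variety $G\setminus D_G$, it is enough to verify Conjecture \ref{p-conj1.10} on any convenient log smooth model of this pair. On a dense open subset $U_0\subseteq Y\setminus D$ the Iitaka fibration restricts to a morphism $U_0\to Z$, and $G\setminus D_G$ is birational to a general fiber $U_{0,z}$ of this morphism; the closure of $U_{0,z}$ in the affine variety $Y\setminus D$ is closed, hence affine. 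Thus $G\setminus D_G$ is birational to an affine variety $A$. Choosing a projective resolution $A''\to A$ and then a smooth projective compactification $(G', E')$ with $A''=G'\setminus E'$ and $E'$ simple normal crossing --- possible after blowing up the boundary of an arbitrary compactification of $A''$ to make it divisorial and log-resolving --- the composite $G'\setminus E'=A''\to A$ is a projective birational morphism onto the affine variety $A$. Therefore Conjecture \ref{p-conj1.10} holds for $(G', E')$ by the known case, and hence for $(G, D_G)$ by birational invariance, and Theorem \ref{p-thm1.12} delivers \eqref{p-eq1.3}. The main obstacle is precisely this last point: producing, from the affineness of $Y\setminus D$, a smooth projective model of the general log Iitaka fiber whose divisor complement maps properly and birationally onto an affine variety, so that the established instances of the generalized abundance conjecture become applicable.
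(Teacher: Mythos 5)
Your overall route is the same as the paper's: supply the two hypotheses of Theorem \ref{p-thm1.12} by combining Theorem \ref{p-thm4.5} with Gongyo's theorem (Theorem \ref{q-thm5.2}). Your first case is just the contrapositive of the paper's reduction step (the paper assumes $\kappa(X,K_X+E)\geq 0$ and deduces $\kappa(Y,K_Y+D)\geq 0$ from Theorems \ref{p-thm4.5} and \ref{q-thm5.2}), and that part is correct.

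The second case, however, contains a genuine gap, sitting exactly at the point you yourself flag as ``the main obstacle.'' The claim that $\kappa(G,K_G+D_G)$ and $\kappa_\sigma(G,K_G+D_G)$ ``depend only on the birational class of the open variety $G\setminus D_G$'' is false: logarithmic Kodaira and numerical dimensions are invariants of the open part only up to \emph{proper} birational equivalence. For instance, $\mathbb{C}^2$ and $(\mathbb{C}^*)^2$ are birational open surfaces whose logarithmic Kodaira dimensions are $-\infty$ and $0$, respectively. Since your argument produces only a birational equivalence between $G\setminus D_G$ and the affine variety $A=\overline{U_{0,z}}$, the concluding transfer of Conjecture \ref{p-conj1.10} from the auxiliary pair $(G',E')$ back to $(G,D_G)$ ``by birational invariance'' is unjustified as written. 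The repair is to upgrade birational to proper birational, and it renders $(G',E')$ unnecessary: take the boundary on the model to be exactly $D^*=\pi^{-1}(D)$, as in the paper's proof of Theorem \ref{p-thm1.12} (if $D^*$ strictly contains $\pi^{-1}(D)$, the properness below can fail, since the extra components may lie over $Y\setminus D$). Then $G\setminus D_G=G\cap\pi^{-1}(Y\setminus D)$ is closed in $\pi^{-1}(Y\setminus D)$, which is projective over $Y\setminus D$; hence $\pi$ restricts to a projective morphism $G\setminus D_G\to Y\setminus D$ whose image is closed in the affine variety $Y\setminus D$, so that image is affine and equals $A$, and for sufficiently general $z$ the map is birational onto $A$ because $G$ is not contained in the exceptional locus of $\pi$. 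Thus the pair $(G,D_G)$ itself satisfies the hypothesis of Theorem \ref{q-thm5.2}---which is deliberately stated for complements admitting a projective birational morphism onto an affine variety, rather than for affine complements, precisely to cover this situation---and that theorem applies directly to the sufficiently general Iitaka fibers, which is exactly how the paper argues.
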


In Conjecture \ref{p-conj1.9}, 
we have already known that the subadditivity 
\begin{equation}
\kappa (X, K_X+E)\geq \kappa (Y, K_Y+D)+\kappa (F, (K_X+E)|_F)
\end{equation} 
follows from the generalized abundance conjecture 
(see Conjecture \ref{p-conj1.10}). 
For the details, see \cite{fujino-subadditivity}, 
\cite{fujino-corrigendum}, and \cite{hashizume}. 
Hence, if the generalized abundance conjecture 
holds true, then Conjecture \ref{p-conj1.9} 
is also true. Roughly speaking, we have: 

\begin{thm}\label{p-thm1.14}
Let $f\colon X\to Y$, $E$, and $D$ be as in Conjecture \ref{p-conj1.9}. 
We assume that Conjecture \ref{p-conj1.10} holds true. 
Then we have 
\begin{equation}
\kappa (X, K_X+E)=\kappa (Y, K_Y+D)+\kappa (F, (K_X+E)|_F), 
\end{equation} 
where $F$ is a sufficiently general fiber of $f\colon X\to Y$, 
that is, Conjecture \ref{p-conj1.9} holds true.  
\end{thm}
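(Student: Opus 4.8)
The plan is to obtain the asserted equality by playing the two opposite inequalities against each other, while treating separately the degenerate cases in which one of the Kodaira dimensions is $-\infty$. For brevity write $\kappa_X:=\kappa(X,K_X+E)$, $\kappa_Y:=\kappa(Y,K_Y+D)$, and $\kappa_F:=\kappa(F,(K_X+E)|_F)$ for a sufficiently general fiber $F$, with the usual convention $-\infty+n=-\infty$. First I would recall the \emph{subadditivity} direction: under Conjecture~\ref{p-conj1.10} it is already known that
\[
\kappa_X\ \geq\ \kappa_Y+\kappa_F
\]
in our setting (see \cite{fujino-subadditivity}, \cite{fujino-corrigendum}, \cite{hashizume}). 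Thus, granting Conjecture~\ref{p-conj1.10}, the whole task reduces to establishing the reverse (\emph{superadditivity}) inequality $\kappa_X\leq \kappa_Y+\kappa_F$, and then reading off equality.

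Next I would dispose of the case $\kappa_F=-\infty$, where the right-hand side is $-\infty$, by showing directly that $\kappa_X=-\infty$; this step requires no conjecture. For a sufficiently general fiber $F$ the morphism $f$ is smooth along $F$ and $E$ is relatively normal crossing, so by adjunction $(K_X+E)|_F=K_F+E|_F$ with $E|_F$ simple normal crossing on $F$. Hence restriction carries $H^0\!\left(X,\mathscr O_X(m(K_X+E))\right)$ into $H^0\!\left(F,\mathscr O_F(m(K_F+E|_F))\right)=0$ for every $m>0$. Since the sufficiently general fibers sweep out a dense open subset of $X$, any pluri-log-canonical section that vanishes on all of them vanishes identically, so $H^0\!\left(X,\mathscr O_X(m(K_X+E))\right)=0$ for all $m>0$ and therefore $\kappa_X=-\infty$, giving equality.

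It then remains to treat $\kappa_F\geq 0$, and here I would split on $\kappa_Y$. If $\kappa_Y\geq 0$, then (since we assume the full Conjecture~\ref{p-conj1.10}) generalized abundance holds in particular for sufficiently general fibers of the Iitaka fibration of $Y$ with respect to $K_Y+D$, so the hypotheses of Theorem~\ref{p-thm1.12} are met and yield $\kappa_X\leq \kappa_Y+\kappa_F$; combined with subadditivity this forces equality. If instead $\kappa_Y=-\infty$, the right-hand side is $-\infty$ and I would argue via the numerical dimension: applying Conjecture~\ref{p-conj1.10} to the pair $(Y,D)$ gives $\kappa_\sigma(Y,K_Y+D)=\kappa_Y=-\infty$, so $K_Y+D$ is not pseudo-effective; were $\kappa_X\geq 0$, the pseudo-effectivity descent of Theorem~\ref{p-thm4.5} would force $K_Y+D$ to be pseudo-effective, a contradiction. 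Hence $\kappa_X=-\infty$ and equality holds once more.

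The genuine difficulty of the statement is not in this assembly but is quarantined in its inputs: the analytic substance sits in the superadditivity Theorem~\ref{p-thm1.12}, in the pseudo-effectivity descent Theorem~\ref{p-thm4.5}, and in the known implication ``generalized abundance $\Rightarrow$ subadditivity''. Within the present argument the only delicate point is precisely the case $\kappa_Y=-\infty$, where neither subadditivity nor Theorem~\ref{p-thm1.12} is directly available; there one must detour through Nakayama's numerical dimension on $Y$ to reduce to the non-pseudo-effectivity of $K_Y+D$ and then descend it to conclude $\kappa_X=-\infty$.
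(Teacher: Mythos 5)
Your proof is correct, and it relies on exactly the same key inputs as the paper's own proof: the subadditivity references, Theorem \ref{p-thm1.12}, Theorem \ref{p-thm4.5}, and the nonvanishing consequence of Conjecture \ref{p-conj1.10}. The only real difference is the case decomposition. The paper splits on the left-hand side: if $\kappa(X,K_X+E)=-\infty$ the superadditivity inequality is vacuous, and otherwise Theorem \ref{p-thm4.5} makes $K_Y+D$ pseudo-effective, Conjecture \ref{p-conj1.11} (a special case of Conjecture \ref{p-conj1.10}) gives $\kappa(Y,K_Y+D)\geq 0$, and Theorem \ref{p-thm1.12} finishes. You instead split on the two terms of the right-hand side, which has two consequences: your case $\kappa(Y,K_Y+D)=-\infty$ is precisely the contrapositive of the paper's chain (Conjecture \ref{p-conj1.10} turns $\kappa(Y,K_Y+D)=-\infty$ into non-pseudo-effectivity of $K_Y+D$, and the contrapositive of Theorem \ref{p-thm4.5} then forces $\kappa(X,K_X+E)=-\infty$); and you are obliged to handle the extra case $\kappa(F,(K_X+E)|_F)=-\infty$, which the paper's organization renders unnecessary, by the elementary restriction-of-sections argument --- that argument is fine, since sections of $m(K_X+E)$ restrict to sections on sufficiently general fibers, and over the uncountable field $\mathbb C$ such fibers sweep out a dense subset of $X$, so vanishing on all of them forces vanishing. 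In short, the paper's split is slightly more economical; yours is logically equivalent and costs one additional, but correct and conjecture-free, step.
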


We look at the organization of this paper. 
In Section \ref{q-sec2}, we collect some basic definitions 
and results necessary for this paper for the reader's convenience. 
Subsection \ref{q-subsec2.1} collects some basic definitions. 
In Subsection \ref{q-subsec2.2}, we recall various notions of 
positivities. 
In Subsection \ref{q-subsec2.3}, we explain systems of 
Hodge bundles. 
In Section \ref{q-sec3}, we discuss Hodge theoretic weak 
positivity results. 
They seem to be more general than the usual 
ones slightly. 
In Section \ref{p-sec4}, 
we construct graded logarithmic Higgs sheaves and 
prove Theorem \ref{p-thm1.4} following \cite[Section 3.1]{park} 
closely. 
Here we use variations of mixed Hodge structure. 
In Section \ref{q-sec5}, we prove results 
explained in Section \ref{p-sec1}. 
In Section \ref{x-sec4}, we discuss variations 
of mixed Hodge structure necessary for Section \ref{p-sec4} for 
the sake of completeness. 
In the final section:~Section \ref{sec-fujisawa}, 
we prove a variant of semipositivity theorems
of Fujita--Zucker--Kawamata necessary for Section \ref{q-sec3}. 
Although we could not find any easily accessible accounts 
of the topics in Sections 6 and 7, the results may be more or less 
known to the experts. 

\begin{ack}\label{p-ack}
The first author was partially 
supported by JSPS KAKENHI Grant Numbers 
JP19H01787, JP20H00111, JP21H00974, JP21H04994, JP23K20787. 
The second author was partially
supported by JSPS KAKENHI Grant Number 
JP20K03542.
\end{ack}

\section{Preliminaries}\label{q-sec2}
In this section, we will collect some basic definitions and 
properties necessary for this paper. 

\subsection{Basic definitions}\label{q-subsec2.1} 
We will work over $\mathbb C$, the field of complex numbers. 
A {\em{variety}} means an irreducible and reduced 
separated scheme of finite type over $\mathbb C$. 

\begin{say-s}[$\kappa$ and $\kappa_\sigma$]\label{q-say2.1} 
Let $X$ be a smooth 
projective variety. 
Then $\kappa (X, \bullet)$ and $\kappa _\sigma(X, \bullet)$ denote 
{\em{the Iitaka dimension}} 
and {\em{Nakayama's numerical dimension}} of $\bullet$, respectively, 
where $\bullet$ is a $\mathbb Q$-Cartier divisor or a line bundle on $X$. 
For the details of $\kappa$ and $\kappa _\sigma$, see \cite{ueno}, 
\cite{mori}, \cite{nakayama}, and so on. 
\end{say-s}

\begin{say-s}[Canonical bundles and log canonical bundles]\label{q-say2.2}
Let $X$ be a smooth variety and let $E$ be a simple normal crossing 
divisor on $X$. 
Then we put 
\begin{equation}
\omega_X:=\det \Omega^1_X
\end{equation} 
and 
\begin{equation}
\omega_{(X, E)}:=\omega_X\otimes \mathscr O_X(E) =
:\omega_X(E). 
\end{equation} 
We note that 
\begin{equation}
\omega_{(X, E)}=\det \Omega^1_X(\log E). 
\end{equation}
Let $f\colon X\to Y$ be a surjective morphism 
of smooth varieties and let $D$ be a simple normal crossing 
divisor on $Y$. 
Then we put 
\begin{equation}
\omega_{(X, E)/(Y, D)}:=\omega_{(X, E)}\otimes 
f^*\omega^{\otimes -1}_{(Y, D)}= 
\omega_X(E)\otimes \left(f^*\omega_Y(D)\right)^{\otimes -1}. 
\end{equation}
Let $K_X$ be a Cartier divisor with $\mathscr O_X(K_X)\simeq 
\omega_X$. Then it is obvious that 
$\omega_{(X, E)}\simeq \mathscr O_X(K_X+E)$ holds. 
\end{say-s}

\begin{say-s}[Duals and double duals]\label{q-say2.3}
Let $\mathscr F$ be a coherent sheaf on a smooth 
variety $X$. 
Then we put 
\begin{equation}
\mathscr F^{\vee} :=\mathscr H\!om_{\mathscr O_X}(\mathscr F, \mathscr O_X)
\end{equation}
and 
\begin{equation}
\mathscr F^{\vee\vee} 
:=\mathscr H\!om_{\mathscr O_X}(\mathscr F^{\vee}, \mathscr O_X). 
\end{equation} 
We further assume that $\mathscr F$ is torsion-free. 
Then 
$\widehat \det \mathscr F$ and $\widehat S^\alpha(\mathscr F)$
denote $\left(\det \mathscr F\right) ^{\vee\vee}$ and 
$S^\alpha(\mathscr F)^{\vee\vee}$, respectively, where 
$S^\alpha(\mathscr F)$ is the $\alpha$-symmetric product of $\mathscr F$. 
\end{say-s}

\begin{say-s}[Sufficiently general fibers and 
general fibers]\label{q-say2.4} 
Let $f\colon X\to Y$ be a surjective morphism 
between varieties. 
Then a {\em{sufficiently general fiber}} (resp.~{\em{general fiber}}) 
$F$ of $f\colon X\to Y$ 
means that $F=f^{-1}(y)$, 
where $y$ is any closed point contained in a countable intersection 
of nonempty Zariski open sets (resp.~a nonempty Zariski open set) 
of $Y$. 
A sufficiently general fiber is sometimes called a {\em{very 
general fiber}} in the literature. 
\end{say-s}

\subsection{Weakly positive sheaves}\label{q-subsec2.2}
Let us recall the necessary definitions around various positivity. 
The following definition is well known and standard 
in the study of higher-dimensional algebraic varieties. 

\begin{defn-s}\label{q-def2.5}
Let $X$ be a projective variety and let $\mathscr L$ be a line bundle 
on $X$. 
\begin{itemize}
\item[(i)] $\mathscr L$ is {\em{big}} if 
$\mathscr L^{\otimes k}\simeq \mathscr H\otimes \mathscr O_X(B)$ for 
some positive integer $k$, 
an ample line bundle $\mathscr H$, and 
an effective Cartier divisor $B$ on $X$. 
\item[(ii)] $\mathscr L$ is {\em{nef}} if 
$\mathscr L\cdot C\geq 0$ holds 
for every curve $C$ on $X$. 
\item[(iii)] $\mathscr L$ is {\em{pseudo-effective}} 
if 
$\mathscr L^{\otimes m} \otimes \mathscr H$ is big 
for every positive integer $m$ and every ample 
line bundle $\mathscr H$ on $X$. 
\end{itemize} 
We note that a nef line bundle is always pseudo-effective. 
Let $\mathscr E$ be a locally free sheaf of finite rank on a projective 
variety $X$. 
\begin{itemize}
\item[(iv)] $\mathscr E$ is {\em{nef}} if 
$\mathscr O_{\mathbb P_X(\mathscr E)}(1)$ is a nef line bundle 
on $\mathbb P_X(\mathscr E)$.
\end{itemize}
We note that a nef locally free sheaf is sometimes 
called a {\em{semipositive}} locally free sheaf. 
\end{defn-s}

We recall the definition of weakly positive sheaves, 
which was first introduced by Viehweg. 
For the basic properties of weakly positive sheaves, 
see \cite{fujino-iitaka}. 

\begin{defn-s}[Weakly positive sheaves]\label{q-def2.6} 
Let $X$ be a normal quasi-projective variety and let $\mathscr A$ 
be a torsion-free coherent sheaf on $X$. 
We say that $\mathscr A$ is {\em{weakly positive}} 
if, for every positive integer $\alpha$ and every ample 
line bundle $\mathscr H$ on $X$, there exists a positive integer $\beta$ 
such that $\widehat S^{\alpha\beta}(\mathscr A) \otimes 
\mathscr H^{\otimes \beta}$ 
is generically generated by global sections, where 
$\widehat S^{\alpha\beta}(\mathscr A)$ denotes the 
double dual of the $\alpha\beta$-symmetric product of 
$\mathscr A$. 
\end{defn-s}

It is well known that $\widehat \det\mathscr A$ is weakly 
positive when $\mathscr A$ is a torsion-free weakly positive 
coherent sheaf on a normal quasi-projective variety. Moreover,  
a line bundle on a normal projective variety is 
pseudo-effective if and only if it is weakly positive. 
In general, the weak positivity does not behave well under extensions. 

\begin{rem-s}\label{q-rem2.7}
In \cite{ejiri-fujino-iwai}, we constructed a short exact sequence 
\begin{equation*}
0\to \mathscr E'\to \mathscr E\to \mathscr E''\to 0
\end{equation*} 
of locally free sheaves on a smooth projective surface such that 
$\mathscr E'$ and $\mathscr E''$ are pseudo-effective line bundles 
but 
$\mathscr E$ is not weakly positive. 
\end{rem-s} 

We make a small remark on \cite{ejiri-fujino-iwai} 
for the reader's convenience. 
Professor Robert Lazarsfeld pointed out that 
the following example answers \cite[Question 3.2]{ejiri-fujino-iwai} 
negatively. 

\begin{ex-s}[Gieseker]\label{q-ex2.8} 
There exists a rank two ample vector bundle 
$E$ on $\mathbb P^2$ sitting in an exact sequence 
\begin{equation*}
0\to \mathscr O_{\mathbb P^2}(-d)^{\oplus 2}
\to \mathscr O_{\mathbb P^2}(-1)^{\oplus 4}\to E\to 0, 
\end{equation*} 
where $d$ is a sufficiently large positive integer $d$ (see, 
for example, \cite[Example 6.3.67]{lazarsfeld}). Let $\pi\colon 
X\to \mathbb P^2$ be any generically finite surjective morphism 
from a smooth variety $X$. 
Then we see that $H^0(X, \pi^*E)=0$ since 
$H^1(X, \pi^*\mathscr O_{\mathbb P^2}(-d)^{\oplus 2})=0$ 
by the Kawamata--Viehweg vanishing theorem. 
In particular, $\pi^*E$ is not generically globally generated. 
\end{ex-s}

\subsection{Systems of Hodge bundles}\label{q-subsec2.3} 

Let $V_0=(\mathbb V_0, W_{0}, F_0)$ be a graded 
polarizable admissible variation of 
$\mathbb R$-mixed Hodge structure on a complex manifold $X_0$, 
where $\mathbb V_0$ is a local system of finite-dimensional 
$\mathbb R$-vector spaces on $X_0$, 
$W_{0}$ is an increasing filtration of $\mathbb V_{0}$ 
by local subsystems, and 
$F_{0}=\{F^p_0\}$ is the Hodge filtration. 
Then we obtain a Higgs bundle $(E_0, \theta_0)$ on $X_0$ by 
setting 
\begin{equation}
E_0=\Gr_{F_0}^\bullet \mathscr V_0=\bigoplus _p F^p_0/F^{p+1}_0
\end{equation} 
where $\mathscr V_0=\mathbb V_0\otimes \mathscr O_{X_0}$. 
Note that $\theta_0$ is induced by the Griffiths transversality 
\begin{equation}
\nabla\colon F^p_0\to \Omega^1_{X_0}\otimes _{\mathscr O_{X_0}}
F^{p-1}_0. 
\end{equation}
More precisely, $\nabla$ induces 
\begin{equation}
\theta^p_0\colon F^p_0/F^{p+1}_0\to \Omega^1_{X_0}\otimes 
_{\mathscr O_{X_0}}\left (F^{p-1}_0/F^p_0\right)
\end{equation} 
for every $p$. 
Then 
\begin{equation}
\theta_0=\bigoplus_p \theta^p_0\colon E_0\to \Omega^1_{X_0}\otimes 
_{\mathscr O_{X_0}}E_0.
\end{equation} 
The pair $(E_0, \theta_0)$ is usually called 
the {\em{system of Hodge bundles}} associated to $V_0=(\mathbb V_0, W_{0}, 
F_0)$ and $\theta_0$ is called the {\em{Higgs field}} of $(E_0, \theta_0)$. 

We further assume that $X_0$ is a Zariski open subset of 
a complex manifold $X$ such that 
$D=X\setminus X_0$ is a normal crossing divisor on $X$. 
We note that the local monodromy of $\mathbb V_0$ around $D$ is 
quasi-unipotent because $V_0$ is admissible. 
Let ${}^\ell \mathscr V$ be the lower canonical extension of 
$\mathscr V_0$ on $X$, 
that is, the {\em{Deligne extension}} of $\mathscr V_0$ on $X$ 
such that the eigenvalues of the residue of the connection 
are contained in $[0, 1)$. 
Let ${}^\ell F^p$ be the lower canonical extension of $F^p_0$, that is,  
\begin{equation}
{}^\ell F^p=j_*F^p_0\cap {}^\ell \mathscr V, 
\end{equation}  
where $j\colon X_0\hookrightarrow X$ is the natural open immersion, 
for every $p$. 
Since $V_{0}$ is admissible, 
${}^\ell F^p$ is a subbundle of ${}^\ell \mathscr V$ for 
every $p$, and 
we can extend $(E_0, \theta_0)$ to $(E, \theta)$ on $X$, 
where 
\begin{equation}
E=\Gr_F^\bullet {}^\ell \mathscr V =\bigoplus _p {}^\ell F^p/
{}^\ell F^{p+1}
\end{equation}  
and 
\begin{equation}
\theta\colon E\to \Omega^1_X(\log D)\otimes _{\mathscr O_X}E. 
\end{equation} 
When all the local monodromies of $V_0$ around $D$ are 
unipotent, we simply write $\mathscr V$ and $F^p$ to denote 
${}^\ell \mathscr V$ and ${}^\ell F^p$, respectively. 
We say that $\mathscr V$ (resp.~$F^p$) is the canonical extension of 
$\mathscr V_0$ (resp.~$F^p_0$). 

\begin{rem-s}\label{q-rem2.9} 
Although we formulated systems of Hodge bundles for graded 
polarizable {\em{admissible}} variations of $\mathbb R$-mixed Hodge structure,  
we do not need the relative monodromy weight filtration 
in this paper. We will only use Hodge bundles and their extensions. 
For the details of the necessary conditions, see \ref{item:pvhs} and 
\ref{item:extension} in \ref{7474} below and Remark \ref{7575}. 
\end{rem-s}

We will discuss a variant of semipositivity theorems 
of Fujita--Zucker--Kawamata in Section \ref{sec-fujisawa} below 
(see Theorem \ref{thm:1}). 

\section{Hodge theoretic weak positivity theorem}\label{q-sec3}
In this section, we will prove Theorem \ref{q-thm3.1}. 
For related topics, see \cite{fujita}, 
\cite{Zucker}, \cite{kawamata1}, \cite{zuo}, \cite{fujino-fujisawa1}, 
\cite{fujino-fujisawa-saito}, 
\cite{brunebarbe1}, 
\cite{popa-wu}, 
\cite{popa-schnell1}, 
\cite{fujisawa2}, 
\cite{fujino-fujisawa2},  
\cite{brunebarbe}, and so on. 
In this section, we will freely 
use the notation in Subsection \ref{q-subsec2.3}. 
Let us start with the following theorem:~Theorem \ref{thm-fujisawa}. 
As far as we know, there is no rigorous published proof 
of Theorem \ref{thm-fujisawa}. 
The reader can find an approach to Theorem \ref{thm-fujisawa} 
in Section \ref{sec-fujisawa} below, where 
we will give a Hodge theoretic semipositivity theorem which 
implies Theorem \ref{thm-fujisawa}. 
The approach in Section \ref{sec-fujisawa} is traditional and classical 
(see \cite{fujino-fujisawa1}, \cite{fujino-fujisawa2}, 
\cite{fujino-fujisawa-saito}, and 
\cite{fujisawa2}). 

\begin{thm}\label{thm-fujisawa}
Let $X$ be a smooth projective variety and 
let $X_0\subset X$ be 
a Zariski open subset such that $D=
X\setminus X_0$ is a simple normal crossing divisor on $X$. 
Let $V_0$ be a graded polarizable admissible 
variation of $\mathbb R$-mixed Hodge 
structure over $X_0$. 
We assume that all the local monodromies 
of $V_0$ around $D$ are unipotent. 
We consider the logarithmic Higgs field 
\begin{equation}
\theta\colon \Gr_F^\bullet\mathscr 
V\to \Omega^1_X(\log D)\otimes \Gr _F^\bullet \mathscr V.  
\end{equation} 
If $\mathscr E$ is a subbundle of $\Gr_F^\bullet\mathscr 
V$ and $\mathscr E$ is contained in the kernel of 
$\theta$, 
then $\mathscr E^{\vee}$ is a nef locally free sheaf on $Y$. 
\end{thm}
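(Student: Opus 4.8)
The plan is to reduce the nefness of $\mathscr E^{\vee}$ to the semipositivity theorem of Fujita--Zucker--Kawamata type alluded to in Remark~\ref{q-rem2.9} and established in Section~\ref{sec-fujisawa}. The key structural fact to exploit is that $\mathscr E$ lies in the kernel of the Higgs field $\theta$. Since $(\Gr_F^\bullet\mathscr V,\theta)$ is the system of Hodge bundles coming from a variation of mixed Hodge structure, the graded pieces $\Gr_F^p=\,{}^\ell F^p/{}^\ell F^{p+1}$ carry a relative monodromy weight filtration, and $\theta$ decreases the Hodge index $p$ by one. A subbundle annihilated by $\theta$ therefore behaves, Hodge-theoretically, like a piece on which the connection $\nabla$ preserves the filtration; concretely, over $X_0$ such a flat subbundle corresponds to a sub-VHS (or a direct summand of Hodge type confined to a single weight), whose dual is precisely the kind of object the semipositivity theorem controls.

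First I would localize the problem on $X_0$. The condition $\theta|_{\mathscr E}=0$ means that the induced map $\mathscr E\to\Omega^1_X(\log D)\otimes(\Gr_F^\bullet\mathscr V)/\mathscr E$ vanishes, so by Griffiths transversality the flat connection $\nabla$ on $\mathscr V_0$ restricts, modulo the relevant graded quotient, to a connection preserving the Hodge filtration on the corresponding subsystem. The first step is thus to identify $\mathscr E|_{X_0}$ with (the associated graded of) the lowest nonzero Hodge piece $F^p_0/F^{p+1}_0$ of a sub-object $V_0'\subset V_0$ of the variation of mixed Hodge structure, so that $\mathscr E$ is the canonical extension of $\Gr_F^p\mathscr V_0'$ with $\theta^p_0=0$ on it. The unipotence hypothesis on the local monodromies guarantees that the canonical (upper/lower) extension is well behaved and that $\mathscr E$ is genuinely a subbundle extending across $D$, so no eigenvalue bookkeeping is needed.

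The second step is to invoke the semipositivity theorem (Theorem~\ref{thm:1}, yielding Theorem~\ref{thm-fujisawa}): for a polarized variation of Hodge structure with unipotent local monodromies, the bottom graded Hodge bundle of the canonical extension has a nef dual. Here the vanishing of $\theta$ on $\mathscr E$ is exactly what puts $\mathscr E$ in the position of such a bottom piece, so that the curvature estimate coming from the Hodge metric (the Fujita--Zucker--Kawamata semipositivity, extended across $D$ via the unipotent Deligne extension) applies to $\mathscr E^{\vee}$. I would then check that restricting to the unipotent, pure case loses no generality: the weight filtration $W_0$ is split by polarizable pure pieces on the graded level, and $\theta$ respects this grading, so the kernel condition descends to each pure graded summand where the classical theorem is available.

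The main obstacle is the passage from the analytic semipositivity statement over $X_0$ to the algebraic nefness of $\mathscr E^{\vee}$ as a locally free sheaf on all of $X$, which is precisely the content encapsulated in Theorem~\ref{thm:1}. The delicate point is that $\theta|_{\mathscr E}=0$ controls only the behavior of $\mathscr E$ relative to its neighbors in the Hodge filtration, whereas nefness of $\mathscr E^{\vee}$ requires a genuine curvature (semipositivity) estimate for the Hodge metric on $\mathscr E$ near the boundary $D$; verifying that the metric's singularities along $D$ are mild enough (logarithmic growth, guaranteed by unipotence and admissibility) to conclude nefness of the extended bundle is where the real work lies, and I would discharge it by citing the Hodge theoretic semipositivity theorem proved in Section~\ref{sec-fujisawa} rather than reproving the estimate.
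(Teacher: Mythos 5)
There is a genuine gap, and it sits at the center of your argument. Your first step claims that a subbundle $\mathscr E\subset\ker\theta$ can be identified with (the canonical extension of) the lowest nonzero Hodge graded piece of a sub-object $V_0'\subset V_0$. This is false in general, and the strength of the theorem lies precisely in the fact that no such structure is available. Take the weight-one geometric case: $\Gr_F^\bullet\mathscr V=f_*\omega_{Z/X}\oplus R^1f_*\mathscr O_Z$ for a family of curves $f\colon Z\to X$. The Higgs field vanishes identically on the summand $R^1f_*\mathscr O_Z$ (it maps to $\Gr_F^{-1}\mathscr V=0$), so \emph{every} subbundle of $R^1f_*\mathscr O_Z$, e.g.\ an arbitrary line subbundle, is contained in $\ker\theta$; likewise the kernel of the Kodaira--Spencer map inside $f_*\omega_{Z/X}$ lies in $\ker\theta$. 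None of these is the Hodge piece of a sub-variation: if $\mathbb V_0$ is irreducible, its only sub-objects are $0$ and $\mathbb V_0$ itself. So the proposed reduction to "$\mathscr E$ is a bottom piece of a sub-VHS, now quote Fujita--Zucker--Kawamata" cannot be carried out, and the subsequent claim that "$W_0$ is split by polarizable pure pieces" is also unavailable (the weight filtration of a mixed variation does not split).

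The missing idea is a dualization, and it is in fact the paper's entire proof. Theorem \ref{thm:1} is not the statement you quote ("the bottom graded Hodge bundle has a nef dual"); it asserts nefness of a locally free \emph{quotient} $\mathscr F$ of $\Gr_F^\bullet\mathscr V$ on which the induced composite
\begin{equation}
\Gr_F^\bullet\Gr_m^W\mathscr V\to \Omega^1_X(\log D)\otimes \Gr_m^W\mathscr F
\end{equation}
vanishes for every $m$. To apply it to $\mathscr E^\vee$ one passes to the \emph{dual} variation of $\mathbb R$-mixed Hodge structure: the inclusion $\mathscr E\hookrightarrow\Gr_F^\bullet\mathscr V$ transposes to a surjection from the graded Hodge bundle of the dual variation onto $\mathscr E^\vee$, and $\theta|_{\mathscr E}=0$ transposes exactly to the required vanishing condition for this quotient; Theorem \ref{thm:1} then gives nefness of $\mathscr E^\vee$ directly, with no case analysis, no reduction to pure pieces, and no identification of $\mathscr E$ with any Hodge-theoretic sub-object. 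Your proposal gestures at this ("whose dual is precisely the kind of object the semipositivity theorem controls") but never sets up the dual variation or verifies that the kernel condition becomes the quotient condition, and instead routes the argument through the false structural claim above. Repairing the proof means replacing your first step entirely by this duality argument.
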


\begin{proof} 
We consider the dual variation of $\mathbb R$-mixed Hodge 
structure (see the description in 
\cite[2.5]{fujisawa2}). Then, by Theorem \ref{thm:1} below (see 
also \cite[Theorem 4.2]{fujisawa2}), 
we see that $\mathscr E^{\vee}$ is a nef locally free 
sheaf on $Y$. 
\end{proof} 

\begin{rem}\label{rem-new} 
When $V_0$ is pure in Theorem \ref{thm-fujisawa}, 
a slightly better result was proved in \cite[Theorem 1.5]{fujino-fujisawa2}. 
\end{rem}

The main result of this section is as follows. 

\begin{thm}[Hodge theoretic weak positivity theorem]\label{q-thm3.1}
Let $X$ be a smooth projective variety and 
let $X_0\subset X$ be 
a Zariski open subset such that $D=
X\setminus X_0$ is a simple normal crossing divisor on $X$. 
Let $V_0$ be a graded polarizable admissible 
variation of $\mathbb R$-mixed Hodge 
structure over $X_0$. 
If $\mathscr A$ is a coherent sheaf on $X$ 
such that $\mathscr A$ 
is contained in the kernel of the logarithmic Higgs field 
\begin{equation}
\theta\colon \Gr_F^\bullet {}^\ell\mathscr 
V\to \Omega^1_X(\log D)\otimes \Gr _F^\bullet {}^\ell\mathscr V, 
\end{equation} 
then the dual coherent sheaf $\mathscr A^{\vee}$ is weakly positive. 
\end{thm}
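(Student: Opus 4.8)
The plan is to reduce Theorem \ref{q-thm3.1} to the unipotent-monodromy case already handled by Theorem \ref{thm-fujisawa}, and then pass from the nef conclusion there to weak positivity. The statement of Theorem \ref{q-thm3.1} concerns the \emph{lower canonical} extension ${}^\ell\mathscr V$ and a general admissible variation whose local monodromies around $D$ need only be quasi-unipotent, whereas Theorem \ref{thm-fujisawa} assumes unipotent monodromy and yields a locally free, \emph{nef} dual. So two gaps must be bridged: the quasi-unipotent-to-unipotent reduction, and the nef-to-weakly-positive passage. A further subtlety is that $\mathscr A$ in Theorem \ref{q-thm3.1} is only assumed coherent and contained in $\ker\theta$, not a subbundle, so I must first extract a subbundle from it.

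First I would perform a finite base change to reduce to unipotent monodromy. By the quasi-unipotent monodromy theorem one can choose a finite cover $\pi\colon X'\to X$, ramified along $D$, after replacing $X'$ by a resolution so that $D'=\pi^{-1}(D)$ is again simple normal crossing, such that the pulled-back variation $V_0'$ on $X_0'=\pi^{-1}(X_0)$ has unipotent local monodromies. The graded Higgs bundle $\Gr_F^\bullet {}^\ell\mathscr V$ pulls back compatibly to $\Gr_F^\bullet \mathscr V'$ (here the canonical extension of the unipotent variation), and $\ker\theta$ pulls back into $\ker\theta'$; this is the step where I would need to check that the lower canonical extension is compatible with the base change and Kawamata covering construction, a standard but technical point. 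Next, from the coherent subsheaf $\mathscr A\subset\ker\theta$ I would pass to its image in, or saturation inside, $\Gr_F^\bullet{}^\ell\mathscr V$: replacing $\mathscr A$ by the subbundle it generically generates changes $\mathscr A^\vee$ only on a set of codimension at least two, which does not affect weak positivity of the reflexive hull. Pulling this subbundle back and applying Theorem \ref{thm-fujisawa} on $X'$ gives that $(\mathscr A')^\vee$ is nef, hence pseudo-effective, on $X'$.

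The final step is to descend and convert. Since a nef locally free sheaf is weakly positive, $(\mathscr A')^\vee$ is weakly positive on $X'$. Weak positivity descends under the finite surjective morphism $\pi$: if a sheaf becomes weakly positive after a finite pullback, the original is weakly positive, using the trace/norm argument together with the behaviour of symmetric powers and global generation under finite covers (as in Viehweg's theory, see \cite{fujino-iitaka}). Applying this to $\mathscr A^\vee$, whose pullback agrees with $(\mathscr A')^\vee$ up to the subbundle replacement above, I conclude that $\mathscr A^\vee$ is weakly positive on $X$.

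The hard part will be the base-change compatibility in the first step: controlling how the lower canonical extension, the Hodge filtration $\Gr_F^\bullet$, and the kernel of $\theta$ behave under the Kawamata cover and the subsequent resolution, so that a subbundle of $\ker\theta$ on $X$ produces a genuine subbundle of $\ker\theta'$ on $X'$ to which Theorem \ref{thm-fujisawa} applies verbatim. The descent of weak positivity under $\pi$ is comparatively routine given Viehweg's machinery, and the nef-implies-weakly-positive implication is stated already in the excerpt; so I expect the technical heart to be the Hodge-theoretic bookkeeping of extensions under ramified base change.
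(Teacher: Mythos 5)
Your overall strategy (unipotent reduction via a Kawamata cover, then Theorem \ref{thm-fujisawa}, then descent of weak positivity under the finite cover) matches the paper's, but there is a genuine gap at the step where you produce the object to which Theorem \ref{thm-fujisawa} is applied. You replace $\mathscr A$ by ``the subbundle it generically generates,'' i.e.\ its saturation inside $\Gr_F^\bullet{}^\ell\mathscr V$. On a variety of dimension $\ge 2$, a saturated subsheaf of a vector bundle is a subbundle only outside a closed subset of codimension $\ge 2$: for instance, the image of $\mathscr O\xrightarrow{(x,y)}\mathscr O^{\oplus 2}$ near a point of a surface is saturated (its quotient is the ideal sheaf of the point, which is torsion-free) but is not a subbundle. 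Theorem \ref{thm-fujisawa} requires an honest subbundle, and its conclusion---nefness of the dual---cannot be checked off a codimension-two set, so you can neither apply it to the saturation nor argue that the discrepancy ``does not affect weak positivity'': the problem is not in transferring the conclusion across a codimension-two set but in obtaining it at all. The same difficulty reappears when you pull back along the cover $\pi$: since the lower canonical extension does not pull back to the canonical extension of $\pi^{-1}V_0$ (there is only an inclusion $\pi^*({}^\ell\mathscr V)\subset \mathscr V'$ with cokernel supported on the boundary), the pullback of even a genuine subbundle of $\ker\theta$ need not be a subbundle, or even saturated, in $\Gr_F^\bullet\mathscr V'$.

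The missing idea is the flattening theorem. The paper first performs the unipotent reduction---at that stage it only needs the robust fact that $f^*\mathscr A$ is \emph{contained in} the kernel of $\theta_Y$, with no subbundle condition---and only afterwards applies flattening to $\mathscr G/\mathscr A$, where $\mathscr G=\Gr_F^\bullet\mathscr V$. This produces a projective \emph{birational} modification $Y\to X$ with $f^{-1}D$ simple normal crossing on which $f^*(\mathscr G/\mathscr A)/\mathrm{torsion}$ is locally free, so that the saturation of $f^*\mathscr A$ becomes a genuine subbundle $\mathscr E$ contained in $\ker(f^*\theta)$, and Theorem \ref{thm-fujisawa} applies on $Y$ (the canonical extension is compatible with this pullback because the monodromy is already unipotent). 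Nefness of $\mathscr E^\vee$ on $Y$, together with the generically isomorphic injection $\mathscr E^\vee\hookrightarrow(f^*\mathscr A)^\vee$, then yields weak positivity of $\mathscr A^\vee$ off a codimension-two set, which suffices because weak positivity, unlike nefness, is insensitive to codimension two. Your proposal contains no birational modification at all, and without it the reduction to Theorem \ref{thm-fujisawa} does not go through.
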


\begin{proof}[Proof of Theorem \ref{q-thm3.1}]
Let $U$ be the largest Zariski open subset of $X$ such that 
$\mathscr A|_U$ is locally free. 
Since $\mathscr A$ is torsion-free, we have $\codim _X(X\setminus U)\geq 2$. 
By Kawamata's unipotent reduction theorem, 
we have a finite surjective flat morphism 
$f\colon Y\to X$ from a smooth projective variety 
such that $f^{-1}D$ is a simple normal crossing divisor 
on $Y$, $f^{-1}V_0$ is a graded polarizable 
admissible variation of 
$\mathbb R$-mixed Hodge structure on 
$Y_0:=Y\setminus f^{-1}D$, and all the 
local momodromies of $f^{-1}V_0$ around $f^{-1}D$ 
are unipotent. 
By considering 
the canonical extension of the system of Hodge bundles 
associated to $f^{-1}V_0$, we have  
\begin{equation}
\theta_Y\colon \Gr_F^\bullet\mathscr 
V_Y\to \Omega^1_Y(\log f^{-1}D)\otimes \Gr _F^\bullet \mathscr V_Y,  
\end{equation} 
where $\mathscr V_Y$ is the canonical 
extension of $f^{-1}\mathbb V_0\otimes \mathscr O_{Y_0}$. 
Then $f^*\mathscr A$ is 
contained in the kernel of $\theta_Y$. 
If $(f^*\mathscr A)^{\vee}$ is weakly positive, then 
it is obvious that $f^*(\mathscr A^{\vee}|_U)$ is weakly positive. 
Hence we see that $\mathscr A^{\vee}$ is also 
weakly positive. Therefore, by replacing $\mathscr A$ and $V_0$ with 
$f^*\mathscr A$ and 
$f^{-1}V_0$, respectively, we may assume that 
$V_0$ has unipotent monodromies around $D$. 
We apply the flattening theorem to 
$\mathscr G/\mathscr A$, where $\mathscr G:=\Gr^{\bullet}_F\mathscr V$. 
Then we get a projective birational morphism 
$f\colon Y\to X$ from a smooth projective variety $Y$ such that 
$f^*(\mathscr G/\mathscr A)/\mathrm{torsion}$ is locally free 
and that $f^{-1}D$ is a simple normal crossing divisor on $Y$. 
By construction, we obtain a subbundle 
$\mathscr E$ of $f^*\mathscr G$ such that 
there exists a generically isomorphic injection 
$f^*\mathscr A\hookrightarrow 
\mathscr E$ on $f^{-1}(U)$. Thus $\mathscr E$ is contained in 
the kernel of $f^*\theta$, where 
\begin{equation}
f^*\theta\colon \Gr^{\bullet} _Ff^*\mathscr V
\to \Omega^1_Y(\log f^{-1}D)\otimes 
_{\mathscr O_Y}\Gr^{\bullet}_Ff^*\mathscr V. 
\end{equation} 
Hence $\mathscr E^{\vee}$ is a nef locally free sheaf on 
$Y$ by Theorem \ref{thm-fujisawa}. 
In particular, $\mathscr E^{\vee}$ is weakly positive. 
Since we have a generically isomorphic injection  
$\mathscr E^{\vee}\hookrightarrow (f^*\mathscr A)^{\vee}$ on $f^{-1}(U)$, 
$(f^*\mathscr A)^{\vee}$ is weakly positive on $f^{-1}(U)$. 
This implies that $\mathscr A^{\vee}$ is weakly positive 
on $U$. 
Hence, $\mathscr A^{\vee}$ is a weakly positive reflexive 
sheaf on $X$ since $\codim _X(X\setminus U)\geq 2$. 
This is what we wanted.  
\end{proof}

\section{Graded logarithmic Higgs sheaves}\label{p-sec4}

This section is a direct generalization of \cite[Section 3.1]{park}. 
The original idea goes back to Viehweg and Zuo (see \cite{viehweg-zuo1} 
and \cite{viehweg-zuo2}). 
Let us recall the definition of graded logarithmic 
Higgs sheaves following \cite{park} (see also \cite{viehweg-zuo1}, 
\cite{viehweg-zuo2}, \cite{popa-schnell1}, and so on). 

\begin{defn}[Graded logarithmic Higgs sheaves]\label{p-def4.1} 
Let $Y$ be a smooth variety and let $D$ be a simple 
normal crossing divisor on $Y$. 
A graded $\mathcal O_Y$-module $\mathscr F_\bullet =\bigoplus 
_k \mathscr F_k$ is a {\em{graded logarithmic 
Higgs sheaf}} with poles along $D$ if there exists a 
logarithmic Higgs structure 
\begin{equation}
\phi \colon \mathscr F_{\bullet}\to \mathscr F_{\bullet}
\otimes_{\mathscr O_{Y}} \Omega^1_Y(\log D)
\end{equation}
such that $\phi=\bigoplus _k \phi_k$, 
\begin{equation}
\phi_k \colon \mathscr F_k\to \mathscr F_{k+1}
\otimes_{\mathscr O_{Y}} \Omega^1_Y(\log D)
\end{equation} 
for every $k$, and 
\begin{equation}
\phi\wedge \phi\colon \mathscr F_{\bullet}\to
\mathscr F_{\bullet}\otimes _{\mathscr O_{Y}}\Omega^2_Y(\log D)
\end{equation} 
is zero. 
We put 
\begin{equation}
\mathscr K_k(\phi):=\ker\left(\phi_k\colon 
\mathscr F_{k}\to \mathscr F_{k+1}\otimes _{\mathscr O_{Y}}
\Omega^1_{Y}(\log D)\right), 
\end{equation} 
that is, $\mathscr K_{k}(\phi)$ is the kernel of the generalized 
Kodaira--Spencer map $\phi_k$ for every $k$. 
\end{defn}

Theorem \ref{p-thm4.2} is a slight generalization of \cite[Theorem 3.2]{park}. 
One of the motivations of this paper is to understand \cite[Theorem 3.2]{park}. 

\begin{thm}\label{p-thm4.2}
Let $f\colon X\to Y$ be a projective surjective 
morphism of smooth quasi-projective varieties. 
Let $D$ {\em{(}}resp.~$E${\em{)}} be 
a simple normal crossing divisor 
on $Y$ {\em{(}}resp.~$X${\em{)}}. 
Assume that $f$ is smooth over $Y\setminus D$, 
$E$ is relatively normal crossing over $Y\setminus D$, 
and $f^{-1}(D)\subset E$. 
Let $\mathscr L$ be a line bundle on $Y$ such that 
\begin{equation}
\kappa (X, \omega_{(X, E)/(Y, D)}\otimes 
f^*\mathscr L^{\otimes -1})=
\kappa (X, \omega_X(E)\otimes f^{*}(\omega_{Y}(D))^{\otimes -1}
\otimes f^{*}\mathscr L^{\otimes -1})\geq 0. 
\end{equation}  
Then there exists a graded logarithmic 
Higgs sheaf $\mathscr F_{\bullet}$ with poles along $D$ satisfying 
the following properties: 
\begin{itemize}
\item[{\em{(i)}}] $\mathscr L\subset \mathscr F_{0}$ and 
$\mathscr F_{k}=0$ for every $k<0$.
\item[{\em{(ii)}}] There exists a positive integer $d$ such that 
$\mathscr F_{k}=0$ for every $k>d$. 
\item[{\em{(iii)}}] $\mathscr F_{k}$ is a reflexive 
coherent sheaf on $Y$ for every $k$. 
\item[{\em{(iv)}}] 
Let $\mathscr A$ be a coherent subsheaf 
of $\mathscr F_\bullet$ contained in the kernel of $\phi$. 
Then $\mathscr A^{\vee}$ is weakly positive. 
In particular, 
$\mathscr K_k(\phi)^{\vee}$ is weakly positive. 
\end{itemize}
\end{thm}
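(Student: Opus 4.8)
The plan is to construct the graded logarithmic Higgs sheaf $\mathscr F_\bullet$ from a variation of mixed Hodge structure obtained via the cyclic-covering trick of Viehweg--Zuo, then deduce property (iv) from the Hodge theoretic weak positivity theorem (Theorem \ref{q-thm3.1}). Since $\kappa(X, \omega_{(X,E)/(Y,D)}\otimes f^*\mathscr L^{\otimes -1})\geq 0$, after passing to a suitable multiple $\mathscr L^{\otimes N}\hookrightarrow (f_*\omega_{(X,E)/(Y,D)}^{\otimes N})^{\vee\vee}$ one obtains, over $Y\setminus D$, an effective divisor whose associated cyclic cover produces a new family. First I would apply Corollary \ref{p-cor1.2}, the logarithmic fiber product trick, to replace $f$ by a fiber power $f^{(s)}\colon X^{(s)}\to Y$ so that the relevant direct image becomes large enough to absorb the chosen section; this is what makes the Hodge-theoretic input applicable.

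\emph{Next I would build the system of Hodge bundles.} Over $Y\setminus D$ the smooth family $f^{(s)}$ carries a polarizable variation of Hodge structure on the cohomology of its fibers (with coefficients twisted by the cyclic cover determined by the section of $\omega^{\otimes N}_{(X,E)/(Y,D)}\otimes f^*\mathscr L^{\otimes -N}$). By Kawamata's semistable reduction and the admissibility of the resulting variation, this extends to a graded polarizable admissible variation of $\mathbb R$-mixed Hodge structure $V_0$ over $Y\setminus D$, whose associated graded $\Gr^\bullet_F{}^\ell\mathscr V$ with its logarithmic Higgs field
\begin{equation}
\theta\colon \Gr_F^\bullet {}^\ell\mathscr V\to \Omega^1_Y(\log D)\otimes \Gr_F^\bullet {}^\ell\mathscr V
\end{equation}
is exactly the graded logarithmic Higgs sheaf $\mathscr F_\bullet$ we want. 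The grading by Hodge level gives the decomposition $\phi=\bigoplus_k\phi_k$ with $\phi\wedge\phi=0$ automatically, the boundedness of the Hodge filtration yields (ii), and the fact that lower canonical extensions of subbundles are subbundles (as recalled in Subsection \ref{q-subsec2.3}) gives reflexivity in (iii). The inclusion $\mathscr L\subset\mathscr F_0$ in (i) is arranged so that $\mathscr L$ sits in the top Hodge piece $\Gr^0$, coming from the section used to define the cover.

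\emph{For the key conclusion (iv)}, I would observe that any coherent subsheaf $\mathscr A\subset\mathscr F_\bullet$ contained in $\ker\phi$ is, by the very definition of $\mathscr F_\bullet=\Gr^\bullet_F{}^\ell\mathscr V$, a subsheaf of the graded Higgs bundle annihilated by its logarithmic Higgs field $\theta$. Theorem \ref{q-thm3.1} then applies verbatim and yields that $\mathscr A^{\vee}$ is weakly positive on $Y$. Specializing to $\mathscr A=\mathscr K_k(\phi)=\ker\phi_k$, which is certainly contained in $\ker\phi$, gives the weak positivity of $\mathscr K_k(\phi)^{\vee}$.

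\textbf{The hard part} will be the construction of the admissible variation of mixed Hodge structure over $Y\setminus D$ with the correct extension behavior across $D$, and the verification that the chosen section of $\omega^{\otimes N}_{(X,E)/(Y,D)}\otimes f^*\mathscr L^{\otimes -N}$ produces a graded Higgs sheaf in which $\mathscr L$ genuinely embeds into $\mathscr F_0$. The cyclic-covering construction must be performed after resolving the cover and compactifying so that the monodromy is quasi-unipotent and the local structure of the degeneration is controlled; this requires the theory of variations of mixed Hodge structure developed in Section \ref{x-sec4}, together with careful bookkeeping of how $\omega_{(X,E)/(Y,D)}$ appears as a Hodge-theoretic graded piece. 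Once this geometric input is in place, properties (i)--(iv) follow formally, with (iv) being a direct appeal to Theorem \ref{q-thm3.1}.
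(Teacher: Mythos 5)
Your proposal assembles the right ingredients (cyclic covering trick, a variation of mixed Hodge structure, and Theorem \ref{q-thm3.1}), but the central identification is wrong: you take $\mathscr F_\bullet$ to be the \emph{full} graded Higgs bundle $\Gr_F^\bullet{}^\ell\mathscr V$ of the variation attached to the resolved cyclic cover family $h\colon Z\to Y$. That family is smooth only over $Y\setminus D'$, where $D'$ contains $D$ \emph{together with} the image of the branch and singular locus of the cover determined by the chosen section of $\mathscr N^{\otimes m}$, $\mathscr N=\omega_{(X,E)/(Y,D)}\otimes f^*\mathscr L^{\otimes -1}$; in general $D'\supsetneq D$. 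Consequently the Higgs field of $\Gr_F^\bullet{}^\ell\mathscr V$ has log poles along $D'$, not along $D$, so your $\mathscr F_\bullet$ is not a graded logarithmic Higgs sheaf with poles along $D$, which is exactly what Definition \ref{p-def4.1} and the statement require. This is not cosmetic: Lemma \ref{p-lem4.4} and every downstream application need maps into $\left(\Omega^1_Y(\log D)\right)^{\otimes kr}$; poles along $D'$ would only produce maps into $\left(\Omega^1_Y(\log D')\right)^{\otimes kr}$, which gives no information about $K_Y+D$. The missing idea is the image construction: twist the relative log de Rham complex of $(X,E)\to (Y,D)$ by $\mathscr N^{\otimes -1}$, use the Koszul filtration \eqref{p-eq4.1} to obtain connecting homomorphisms whose poles lie only along $D$, and define $\mathscr F_k$ as the double dual of the image of the comparison map $\rho_k\colon R^kf_*\left(\Omega^{d-k}_{X/Y}(\log E)\otimes \mathscr N^{\otimes -1}\right)\to R^kh_*\left(\Omega^{d-k}_{Z/Y}(\log E_Z)\right)$. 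The commutative diagram relating the two connecting homomorphisms through the injection $\Omega^1_Y(\log D)\hookrightarrow \Omega^1_Y(\log D')$ is what forces the induced field on the image to land in $\mathscr F_{k+1}\otimes \Omega^1_Y(\log D)$; it also yields $\mathscr F_0=\left(\mathscr L\otimes f_*\mathscr O_X\right)^{\vee\vee}\supset \mathscr L$, because $\Omega^{d}_{X/Y}(\log E)\otimes \mathscr N^{\otimes -1}\simeq f^*\mathscr L$, and it still allows (iv) to follow from Theorem \ref{q-thm3.1}, since a subsheaf of $\mathscr F_\bullet$ killed by $\phi$ is then a subsheaf of the system of Hodge bundles killed by its Higgs field.

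A secondary, smaller point: your opening appeal to Corollary \ref{p-cor1.2} is misplaced. The hypothesis of Theorem \ref{p-thm4.2} already gives $\kappa(X,\mathscr N)\geq 0$, hence an honest global section of $\mathscr N^{\otimes m}$ on $X$ itself; no fiber power is needed, and the fiber product trick does not ``absorb'' a section in the way you describe. That trick belongs to the \emph{applications} of this theorem (for instance the proof of Theorem \ref{p-thm1.5}, where $\mathscr L^{\otimes N}\subset \left(\bigotimes^N f_*\omega^{\otimes N}_{(X,E)/(Y,D)}\right)^{\vee\vee}$ is converted into the hypothesis of Theorem \ref{p-thm1.4} for the fiber power $f^{(N)}$), while the weaker hypothesis of a nonzero homomorphism $\mathscr L^{\otimes N}\to \left(f_*\omega^{\otimes N}_{(X,E)/(Y,D)}\right)^{\vee\vee}$ is handled by Remark \ref{p-rem4.3} by deleting a codimension-two subset of $Y$, again with no fiber products.
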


Although the proof of Theorem \ref{p-thm4.2} is 
essentially the same as that of \cite[Theorem 3.2]{park}, 
we explain it in detail for the sake of completeness. 

\begin{proof}[Proof of Theorem \ref{p-thm4.2}]
We will closely follow the argument in \cite{park}. 
Since it is sufficient to construct 
$\mathscr F_{\bullet}$ on the complement 
of a codimension two closed 
subset in $Y$, 
we will freely remove suitable codimension 
two closed subsets from $Y$ throughout 
this proof. We put 
\begin{equation}
\mathscr N:=\omega_X(E)\otimes f^{*}\left(\omega_{Y}(D)\right)^{\otimes -1}
\otimes f^{*}\mathscr L^{\otimes -1}. 
\end{equation} 
Since $\kappa (X, \mathscr N)\geq 0$ by assumption, 
we can take a section $s$ of $\mathscr N^{\otimes m}$ for 
some positive integer $m$. 
Let $X'\to X$ be the cyclic cover of $X$ associated 
to $s$ and let $Z\to X'$ be a suitable 
resolution of singularities. We put $\psi\colon Z\to X$ and $h:=f\circ \psi\colon Z\to Y$. 
Hence we have the following commutative 
diagram:  
\begin{equation}
\xymatrix{
&Z\ar[dl]\ar[d]_-\psi\ar[dr]^-h& \\ 
X' \ar[r]& X\ar[r]_-f & Y. 
}
\end{equation}
Then there exists a natural inclusion $\psi^{*}\mathscr N^{\otimes -1}
\hookrightarrow \mathscr O_{Z}$. 
Let $E_Z$ be a simple 
normal crossing divisor on $Z$ such that $\psi^{-1}(E)\subset E_Z$ 
and that $\psi^{-1}(E)=E_Z$ holds over the generic point of $Y$. 
After removing a suitable codimension two 
closed subset from $Y$ and 
taking a birational modification of $Z$ suitably, 
we may further assume that 
there exists a smooth divisor $D'$ on $Y$ such that 
$h\colon Z\to Y$ is smooth over $Y\setminus D'$, 
$E_Z$ is a relatively normal crossing over $Y\setminus D'$, 
and $h^{-1}(D')\subset 
E_Z$. As usual, we put 
\begin{equation}
\Omega^{1}_{X/Y}(\log E):=
\coker\left(f^{*}\Omega^{1}_{Y}(\log D)\to \Omega^{1}_{X}(\log E)\right)
\end{equation}
and 
\begin{equation}
\Omega^{1}_{Z/Y}(\log E_{Z}):=
\coker\left(h^{*}\Omega^{1}_{Y}(\log D')\to \Omega^{1}_{Z}(\log E_{Z})\right). 
\end{equation} 
Without loss of generality, we may assume that 
$D$ is smooth and that $\Omega^{1}_{X/Y}(\log E)$ and 
$\Omega^{1}_{Z/Y}(\log E_{Z})$ are both locally free sheaves. 
By construction, we see that $D\leq D'$ holds. 
We consider 
the Koszul filtration 
\begin{equation}
\label{p-eq4.1}
\mathrm{Koz}^{q}\Omega^{i}_{X}(\log E):=
\image\left(f^{*}\Omega^{q}_{Y}(\log D)\otimes \Omega^{i-q}_{X}
(\log E)\to \Omega^{i}_{X}(\log E)\right). 
\end{equation} 
Then we have 
\begin{equation}
\mathrm{Koz}^{q}/\mathrm{Koz}^{q+1}\Omega^{i}_{X}(\log E)
\simeq f^{*}\Omega^{q}_{Y}(\log D)
\otimes \Omega^{i-q}_{X/Y}(\log E) 
\end{equation} 
and get the following short exact sequence: 
\begin{equation}
0\to f^{*}\Omega^{1}_{Y}(\log D)\otimes 
\Omega^{i-1}_{X/Y}(\log E)\to 
\mathrm{Koz}^{0}/\mathrm{Koz}^{2}\Omega^{i}_{X}(\log E)
\to \Omega^{i}_{X/Y}(\log E)\to 0, 
\end{equation} 
which is denoted by $\mathscr C^{i}_{X/Y}(\log E)$. 
Similarly, we can define 
$\mathrm{Koz}^{q}\Omega^{i}_{Z}(\log E_{Z})$ and 
obtain $\mathscr C^{i}_{Z/Y}(\log E_{Z})$. 
By construction, we have a natural map 
$\psi^{*}\mathscr C^{i}_{X/Y}(\log E)\to 
\mathscr C^{i}_{Z/Y}(\log E_{Z})$ for every $i$. 
By tensoring with the natural injection $\psi^*\mathscr N^{\otimes -1}
\hookrightarrow \mathscr O_Z$, we have 
\begin{equation}
\psi^*\left(\mathscr C^i_{X/Y}(\log E)\otimes 
\mathscr N^{\otimes -1}\right)\to 
\mathscr C^i_{Z/Y}(\log E_Z). 
\end{equation} 
Then, by using the edge homomorphism of the Leray spectral 
sequence, 
we obtain the natural homomorphism 
\begin{equation}
R^{d-i} f_*\left(\Omega^i_{X/Y}(\log E)\otimes \mathscr N^{\otimes -1}\right)
\to R^{d-i}h_*\left(\psi^*\left(\Omega^i_{X/Y}(\log E)
\otimes \mathscr N^{\otimes -1}\right)\right),  
\end{equation} 
where $d=\dim X-\dim Y$. Thus we get the following commutative 
diagram of the connecting homomorphisms.  
\begin{equation}
\xymatrix{
R^{d-i}f_*\left(\Omega^i_{X/Y}(\log E)
\otimes \mathscr N^{\otimes -1}\right)\ar[d]_-{\rho_{d-i}}\ar[r]& 
R^{d-i+1}f_*\left(\Omega^{i-1}_{X/Y}(\log E)
\otimes \mathscr N^{\otimes -1}\right)\otimes \Omega^1_Y(\log D)
\ar[d]^-{\rho_{d-i+1}\otimes \iota}\\ 
R^{d-i}h_*\left(\Omega^i_{Z/Y}(\log E_Z)
\right)\ar[r]_-{\phi'_{d-i}}& 
R^{d-i+1}h_*\left(\Omega^{i-1}_{Z/Y}(\log E_Z)
\right)\otimes \Omega^1_Y(\log D')
}
\end{equation}
We get a graded polarizable admissible variation of 
$\mathbb R$-mixed Hodge structure over $Y\setminus D'$ from 
$h\colon (Z, E_Z)\to (Y, D')$. 
By Theorem \ref{x-thm4.1} below, 
\begin{equation}
\bigoplus _{k=0}^d R^{k}h_*\left(\Omega^{d-k}_{Z/Y} (\log E_Z)\right)
\end{equation} 
is the lower canonical extension of the system of Hodge 
bundles associated to the above variation of $\mathbb R$-mixed 
Hodge structure. Then we put 
\begin{equation}
\mathscr F_k:=\left(\image \left(\rho_k \colon 
R^kf_*\left(\Omega^{d-k}_{X/Y}(\log E)\otimes 
\mathscr N^{\otimes -1}\right)\to 
R^kh_*\left(\Omega^{d-k}_{Z/Y}(\log E_Z)\right)\right)\right)^{\vee \vee}
\end{equation} 
for $0\leq k\leq d$. We put $\mathscr F_k=0$ if $k<0$ or $k>d=\dim 
X-\dim Y$. 
Then we see that $\mathscr F_\bullet$ is a graded logarithmic 
Higgs sheaf with poles along $D$. 
Since $\rho_0$ is the pushforward $f_*$ of the inclusion 
\begin{equation}
f^*\mathscr L=\omega_X(E)\otimes f^*\left(\omega_Y(D)\right)^{\otimes -1}
\otimes \mathscr N^{\otimes -1}\to 
\psi_* \left(\omega_Z(E_Z)
\otimes h^*\left(\omega_Y(D')\right)^{\otimes -1}\right). 
\end{equation} 
This implies that $\mathscr F_0=\left(\mathcal L\otimes f_*\mathscr 
O_X\right)^{\vee \vee}$. 
Hence $\mathscr L\subset \mathscr F_0$ holds. 
Therefore, (i), (ii), and (iii) hold. 
By taking a suitable compactification and applying 
Theorem \ref{q-thm3.1}, 
we obtain (iv). We finish the proof. 
\end{proof}

\begin{rem}[{see \cite[Remark 3.6]{park}}]\label{p-rem4.3} 
In Theorem \ref{p-thm4.2}, 
we can replace the assumption 
\begin{equation}
\kappa (X, \omega_X(E)\otimes f^{*}(\omega_{Y}(D))^{\otimes -1}
\otimes f^{*}\mathscr L^{\otimes -1})\geq 0  
\end{equation} 
with the existence of a nonzero homomorphism 
\begin{equation}\label{p-eq4.2}
\mathscr L^{\otimes N} \to 
\left( f_*\omega ^{\otimes N}_{(X, E)/(Y, D)}\right)^{\vee\vee}
\end{equation} 
for some positive integer $N$. Note that 
\eqref{p-eq4.2} implies the existence of a nonzero section 
of 
\begin{equation}
\left(
\omega_X(E)\otimes f^{*}(\omega_{Y}(D))^{\otimes -1}
\otimes f^{*}\mathscr L^{\otimes -1}\right)^{\otimes N}
\end{equation}
over the complement of some codimension two closed subset $\Sigma$ 
in $Y$. 
Hence we can construct a desired graded 
logarithmic Higgs sheaf $\mathscr F_\bullet$ on $Y\setminus \Sigma$. 
By taking the reflexive hull of $\mathscr F_\bullet$, 
we can extend $\mathscr F_\bullet$ over $Y$. 
\end{rem}

For geometric applications, the following lemma is crucial. 

\begin{lem}[{\cite[Lemma 3.7]{park}}]\label{p-lem4.4}
Let $Y$ be a smooth projective variety and let $D$ be a simple 
normal crossing divisor on $Y$. 
Let $\mathscr F_\bullet$ be a graded logarithmic 
Higgs sheaf with poles along $D$ satisfying {\em{(i)}}, {\em{(ii)}}, 
{\em{(iii)}}, and 
{\em{(iv)}} in Theorem \ref{p-thm4.2}. Then we have a pseudo-effective 
line bundle $\mathscr P$ and a nonzero homomorphism 
\begin{equation}
\mathscr L^{\otimes r}\otimes \mathscr P
\to \left(\Omega^1_Y(\log D)\right)^{\otimes kr}
\end{equation} 
for some $r>0$ and $k\geq 0$. 
\end{lem}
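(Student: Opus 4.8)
The plan is to exploit the graded logarithmic Higgs structure $\phi=\bigoplus_k\phi_k$ together with the line bundle $\mathscr L\subset\mathscr F_0$ by iterating the Higgs field and using the weak positivity in (iv) at each stage where $\phi$ vanishes. Starting from the inclusion $\mathscr L\hookrightarrow\mathscr F_0$, I would feed $\mathscr L$ into the sequence of maps $\phi_k\colon\mathscr F_k\to\mathscr F_{k+1}\otimes\Omega^1_Y(\log D)$ and track how far $\mathscr L$ propagates before landing in a kernel.

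More precisely, first I would compose the inclusion with $\phi_0$ to get $\mathscr L\to\mathscr F_1\otimes\Omega^1_Y(\log D)$, then iterate: composing $\phi_1,\phi_2,\dots$ produces maps $\mathscr L\to\mathscr F_k\otimes\left(\Omega^1_Y(\log D)\right)^{\otimes k}$. Because $\mathscr F_k=0$ for $k>d$ by (ii), after finitely many steps the iterated composite must vanish. Let $k\geq 0$ be the \emph{smallest} integer for which the composite $\mathscr L\to\mathscr F_k\otimes\left(\Omega^1_Y(\log D)\right)^{\otimes k}$ has image landing in $\mathscr K_k(\phi)\otimes\left(\Omega^1_Y(\log D)\right)^{\otimes k}$, i.e.\ in the kernel of $\phi_k$; such $k$ exists since eventually the target is zero. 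This yields a nonzero homomorphism
\begin{equation}
\mathscr L\to \mathscr K_k(\phi)\otimes\left(\Omega^1_Y(\log D)\right)^{\otimes k},
\end{equation}
hence, twisting by the $k$-th tensor power of the cotangent dual, a nonzero map $\mathscr L\otimes\left(\Omega^1_Y(\log D)^{\vee}\right)^{\otimes k}\to\mathscr K_k(\phi)$.

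Next I would invoke property (iv): since $\mathscr K_k(\phi)$ is the kernel of the generalized Kodaira--Spencer map, $\mathscr K_k(\phi)^{\vee}$ is weakly positive. Dualizing the nonzero homomorphism above and taking an appropriate symmetric or tensor power to convert weak positivity into a concrete pseudo-effective line bundle, I would extract a line bundle $\mathscr P$ sitting inside (a power of) $\mathscr K_k(\phi)^{\vee}$ or its determinant, which is therefore pseudo-effective since $\widehat{\det}$ of a weakly positive torsion-free sheaf is weakly positive, and a line bundle is weakly positive if and only if it is pseudo-effective. Combining the map into $\mathscr K_k(\phi)$ with the pseudo-effectivity of $\mathscr K_k(\phi)^{\vee}$, after passing to a suitable power $r>0$ to clear denominators and make the rank-one quotient visible, produces the desired nonzero homomorphism
\begin{equation}
\mathscr L^{\otimes r}\otimes\mathscr P\to\left(\Omega^1_Y(\log D)\right)^{\otimes kr}.
\end{equation}

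The main obstacle will be the bookkeeping that converts the weak positivity of the \emph{sheaf} $\mathscr K_k(\phi)^{\vee}$ into an honest pseudo-effective \emph{line} bundle $\mathscr P$ together with the correct power $r$: one must pair the nonzero map $\mathscr L\to\mathscr K_k(\phi)\otimes(\Omega^1_Y(\log D))^{\otimes k}$ against sections of high symmetric powers of $\mathscr K_k(\phi)^{\vee}\otimes\mathscr H^{\otimes\beta}$ (for ample $\mathscr H$) and let $\beta\to\infty$ so that the ample twist disappears in the limiting pseudo-effective class. Handling the double-dual/reflexive subtleties away from the codimension-two locus, and ensuring the minimality of $k$ so that the composite is genuinely nonzero into the kernel, are the delicate points; the rest is formal manipulation of the graded Higgs complex.
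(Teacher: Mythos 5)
The first half of your argument is exactly the paper's: iterate $\phi_k\otimes\id$ on the twists $\mathscr F_k\otimes\left(\Omega^1_Y(\log D)\right)^{\otimes k}$; since $\mathscr F_k=0$ for $k>d$ and $\mathscr L\subset \mathscr F_0$, at the first $k$ where the next composite vanishes you obtain a nonzero map $\mathscr L\to \mathscr K_k(\phi)\otimes\left(\Omega^1_Y(\log D)\right)^{\otimes k}$, equivalently a nonzero homomorphism $\mathscr K_k(\phi)^{\vee}\to \left(\Omega^1_Y(\log D)\right)^{\otimes k}\otimes\mathscr L^{\otimes -1}$. Up to here there is nothing to change.

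The gap is in how you produce $\mathscr P$, and it is a real one, not bookkeeping. A line bundle ``sitting inside (a power of) $\mathscr K_k(\phi)^{\vee}$'' need not be pseudo-effective: weak positivity passes to generically surjective quotients, never to subsheaves (think of $\mathscr O_{\mathbb P^1}(-1)\subset \mathscr O_{\mathbb P^1}^{\oplus 2}$). Taking the determinant of all of $\mathscr K_k(\phi)^{\vee}$ does give a pseudo-effective line bundle, but then the natural map into $\left(\left(\Omega^1_Y(\log D)\right)^{\otimes k}\otimes\mathscr L^{\otimes -1}\right)^{\otimes r}$ with $r=\rank \mathscr K_k(\phi)$ can be identically zero: the dualized homomorphism usually has a kernel, and the composite $\det \mathscr K_k(\phi)^{\vee}\hookrightarrow \left(\mathscr K_k(\phi)^{\vee}\right)^{\otimes r}\to(\cdots)^{\otimes r}$ factors generically through the $r$-th exterior power of the image, which vanishes as soon as that image has rank $<r$. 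Your ``pair against sections of high symmetric powers and let $\beta\to\infty$'' sketch does not repair this. The paper's fix is short and is the step you need: let $\mathscr Q$ be the image of $\mathscr K_k(\phi)^{\vee}\to \left(\Omega^1_Y(\log D)\right)^{\otimes k}\otimes\mathscr L^{\otimes -1}$ and set $r:=\rank \mathscr Q$, $\mathscr P:=\widehat\det \mathscr Q$. Then $\mathscr Q$ is weakly positive because it is a quotient of the weakly positive sheaf $\mathscr K_k(\phi)^{\vee}$ (this is precisely where (iv) enters, and where quotients rather than subsheaves are essential), so $\mathscr P$ is a pseudo-effective line bundle; and the generically split surjection $\mathscr Q^{\otimes r}\to \det \mathscr Q$ yields, outside a codimension-two closed subset, an inclusion $\det \mathscr Q\hookrightarrow \mathscr Q^{\otimes r}\subset \left(\left(\Omega^1_Y(\log D)\right)^{\otimes k}\otimes\mathscr L^{\otimes -1}\right)^{\otimes r}$, which extends to a nonzero homomorphism $\mathscr P\to \left(\Omega^1_Y(\log D)\right)^{\otimes kr}\otimes\mathscr L^{\otimes -r}$ on all of $Y$ because the target is locally free. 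Rearranging gives the desired $\mathscr L^{\otimes r}\otimes\mathscr P\to \left(\Omega^1_Y(\log D)\right)^{\otimes kr}$.
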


\begin{proof}[Proof of Lemma \ref{p-lem4.4}]
We have a sequence of homomorphisms 
\begin{equation}
\phi_k\otimes \id\colon 
\mathscr F_k\otimes \left(\Omega^1_Y(\log D)\right)^{\otimes k}
\to \mathscr F_{k+1}\otimes \left(\Omega^1_Y(\log D)\right)^{\otimes k+1}. 
\end{equation} 
Note that $\mathscr F_k$ is zero for $k\gg 0$ and 
that $\mathscr L\subset \mathscr F_0$. 
Hence, the line bundle $\mathscr L$ is contained in the kernel of 
$\phi_k\otimes \id$ for some 
$k\geq 0$, that is, 
\begin{equation}
\mathscr L\subset \mathscr K_k(\phi)
\otimes \left(\Omega^1_Y(\log D)\right)^{\otimes k}. 
\end{equation}
This implies the existence of a nonzero homomorphism 
\begin{equation}
\mathscr K_k(\phi)^{\vee} 
\to \left(\Omega^1_Y(\log D)\right)^{\otimes k} 
\otimes \mathscr L^{\otimes -1}. 
\end{equation}
Let $\mathscr Q$ be the image of the above homomorphism 
and let $r$ be the rank of $\mathscr Q$. 
By considering the split surjection 
\begin{equation}
\mathscr Q^{\otimes r}\to \det \mathscr Q
\end{equation} 
outside some suitable codimension 
two closed subset of $Y$, 
we have a nonzero homomorphism 
\begin{equation}
\widehat \det \mathscr Q\to \left(\left(\Omega^1_Y(\log D)\right)^{\otimes k} 
\otimes \mathscr L^{\otimes -1}\right)^{\otimes r}. 
\end{equation} 
Since $\mathscr P:=\widehat \det \mathscr Q$ is a pseudo-effective 
line bundle by Theorem \ref{p-thm4.2} (iv), we obtain a desired 
nonzero homomorphism. We finish the proof. 
\end{proof}

Let us prove Theorem \ref{p-thm1.4}, which is 
one of the main results of this paper. 

\begin{proof}[Proof of Theorem \ref{p-thm1.4}] 
By Remark \ref{p-rem4.3}, we can construct a graded logarithmic Higgs 
sheaf with poles along $D$ satisfying (i), (ii), (iii), and (iv) 
in Theorem \ref{p-thm4.2}. Then, by Lemma \ref{p-lem4.4}, 
we obtain a desired pseudo-effective 
line bundle and a nonzero homomorphism. 
We finish the proof. 
\end{proof}

As a direct consequence of Lemma \ref{p-lem4.4}, we have: 

\begin{thm}[{see \cite[Theorem 1.7 (2)]{park}}]\label{p-thm4.5}
Let $f\colon X\to Y$ be a surjective morphism of 
smooth projective varieties and let $E$ and $D$ be 
simple normal crossing divisors on $X$ and $Y$, respectively. 
Assume that $f$ is smooth over $Y\setminus D$, 
$E$ is relatively normal crossing over $Y\setminus D$, 
and $f^{-1}(D)\subset E$. 
In this situation, if 
$\kappa (X, K_X+E)\geq 0$ holds, then 
$K_Y+D$ is pseudo-effective.   
\end{thm}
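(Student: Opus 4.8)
The plan is to invoke Theorem~\ref{p-thm4.2} and Lemma~\ref{p-lem4.4} for the single carefully chosen line bundle
\begin{equation}
\mathscr L:=\omega_{(Y,D)}^{\otimes -1}=\mathscr O_Y\big(-(K_Y+D)\big),
\end{equation}
and then to turn the resulting sub-line-bundle of a tensor power of $\Omega^1_Y(\log D)$ into the pseudo-effectivity of $K_Y+D$ by a curve-positivity argument. First I would check the hypothesis of Theorem~\ref{p-thm4.2} for this $\mathscr L$. Since $\mathscr L^{\otimes -1}=\omega_{(Y,D)}$, the twist telescopes,
\begin{equation}
\omega_{(X,E)/(Y,D)}\otimes f^*\mathscr L^{\otimes -1}
=\omega_X(E)\otimes f^*\omega_Y(D)^{\otimes -1}\otimes f^*\omega_Y(D)=\omega_X(E),
\end{equation}
so $\kappa\big(X,\omega_{(X,E)/(Y,D)}\otimes f^*\mathscr L^{\otimes -1}\big)=\kappa(X,K_X+E)\geq 0$ by hypothesis. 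Thus Theorem~\ref{p-thm4.2} produces a graded logarithmic Higgs sheaf $\mathscr F_\bullet$ with poles along $D$ satisfying (i)--(iv) and with $\mathscr L\subset\mathscr F_0$, entirely bypassing Theorem~\ref{p-thm1.4}.

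Applying Lemma~\ref{p-lem4.4} to $\mathscr F_\bullet$ then yields a pseudo-effective line bundle $\mathscr P$ and a nonzero homomorphism
\begin{equation}
\omega_{(Y,D)}^{\otimes -r}\otimes\mathscr P\longrightarrow\left(\Omega^1_Y(\log D)\right)^{\otimes kr}
\end{equation}
for some $r>0$ and $k\geq 0$. Because $\Omega^1_Y(\log D)$ is locally free, the target is torsion-free, so a nonzero map out of a line bundle is automatically injective; hence $\omega_{(Y,D)}^{\otimes -r}\otimes\mathscr P$ is an invertible subsheaf of $\left(\Omega^1_Y(\log D)\right)^{\otimes kr}$.

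The final and hardest step is to deduce that $K_Y+D=\omega_{(Y,D)}$ is pseudo-effective. I would argue by contradiction. Assume $K_Y+D$ is not pseudo-effective. By the duality between the pseudo-effective cone of divisors and the movable cone of curves (Boucksom--Demailly--P\u{a}un--Peternell), there is a movable curve class $\alpha$ with $(K_Y+D)\cdot\alpha<0$; and by the orbifold generic semipositivity theorem of Campana--P\u{a}un one may further arrange that $T_Y(-\log D)$ is generically semipositive with respect to $\alpha$, equivalently $\mu_{\max,\alpha}\big(\Omega^1_Y(\log D)\big)\leq 0$. Since $\mu_{\max,\alpha}$ is additive under tensor product over $\mathbb C$, it follows that $\mu_{\max,\alpha}\big((\Omega^1_Y(\log D))^{\otimes kr}\big)\leq 0$, so every invertible subsheaf has nonpositive $\alpha$-degree. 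This contradicts
\begin{equation}
\deg_\alpha\!\left(\omega_{(Y,D)}^{\otimes -r}\otimes\mathscr P\right)=-r\,(K_Y+D)\cdot\alpha+\mathscr P\cdot\alpha>0,
\end{equation}
where the inequality uses $-(K_Y+D)\cdot\alpha>0$ together with $\mathscr P\cdot\alpha\geq 0$ (a pseudo-effective class pairs nonnegatively with a movable class). Hence $K_Y+D$ is pseudo-effective. I expect the crux to be this last step: one must secure a single movable class $\alpha$ carrying \emph{both} $(K_Y+D)\cdot\alpha<0$ and $\mu_{\max,\alpha}(\Omega^1_Y(\log D))\leq 0$ --- precisely the Campana--P\u{a}un input --- and set up the Harder--Narasimhan slope formalism (in particular multiplicativity of $\mu_{\max}$ under tensor powers) with respect to movable, rather than merely ample, polarizations.
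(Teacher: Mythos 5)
Your first three steps reproduce the paper's own proof of Theorem \ref{p-thm4.5} exactly: the paper also sets $\mathscr L=\omega_{(Y,D)}^{\otimes -1}$, observes that $\omega_{(X,E)/(Y,D)}\otimes f^*\mathscr L^{\otimes -1}=\omega_{(X,E)}$ so that the hypothesis of Theorem \ref{p-thm4.2} becomes $\kappa(X,K_X+E)\geq 0$, and then applies Lemma \ref{p-lem4.4} to get a pseudo-effective line bundle $\mathscr P$ and a nonzero homomorphism $\omega_{(Y,D)}^{\otimes -r}\otimes\mathscr P\to \left(\Omega^1_Y(\log D)\right)^{\otimes kr}$; bypassing Theorem \ref{p-thm1.4} is also what the paper does. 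The divergence is only in the final step: the paper concludes by quoting \cite[Theorem 3.9]{park}, which is due to \cite[Theorem 7.6]{campana-paun}, as a black box, whereas you attempt to prove that implication yourself.

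That final step contains a genuine gap. The statement you need --- that when $K_Y+D$ is not pseudo-effective there is a movable class $\alpha$ with \emph{both} $(K_Y+D)\cdot\alpha<0$ \emph{and} $\mu_{\max,\alpha}\bigl(\Omega^1_Y(\log D)\bigr)\leq 0$ --- is not the ``orbifold generic semipositivity theorem.'' Generic semipositivity (Miyaoka, and Campana--P\u{a}un in the orbifold setting) runs in the opposite direction: it \emph{assumes} $K_Y+D$ pseudo-effective (non-uniruledness in the classical case) and concludes that the \emph{cotangent} sheaf $\Omega^1_Y(\log D)$ is generically nef; it asserts nothing in the non-pseudo-effective case, and it is never a statement that $T_Y(-\log D)$ is generically semipositive. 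What you actually invoke is essentially the content of \cite[Theorem 7.6]{campana-paun} itself (equivalently \cite[Theorem 3.9]{park}): excluding positive-slope invertible subsheaves of tensor powers of $\Omega^1_Y(\log D)$ when $K_Y+D$ is not pseudo-effective is exactly where Campana--P\u{a}un's theory of foliations with positive slope (algebraic integrability, rationally connected leaves) enters. For $D=0$ your claim can indeed be proved elementarily: $K_Y$ not pseudo-effective makes $Y$ uniruled, and for $\alpha$ the class of a minimal covering family of free rational curves one has $K_Y\cdot\alpha<0$ and $\Omega^1_Y$ anti-nef on such curves, hence $\mu_{\max,\alpha}(\Omega^1_Y)\leq 0$. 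But for $D\neq 0$ no such direct argument exists (free rational curves meet $D$, and $\Omega^1_Y(\log D)$ restricted to them need not be anti-nef), and this log case is precisely the hard theorem being cited. So your argument is circular at its crux: the single ``Campana--P\u{a}un input'' you require is the result to be proven, under an incorrect name. The remaining ingredients (BDPP duality, $\mathscr P\cdot\alpha\geq 0$ for $\mathscr P$ pseudo-effective and $\alpha$ movable, additivity of $\mu_{\max,\alpha}$ under tensor products --- the last nontrivial but known) are fine; the repair is simply to conclude, as the paper does, by citing \cite[Theorem 3.9]{park}, due to \cite[Theorem 7.6]{campana-paun}.
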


\begin{proof}[Proof of Theorem \ref{p-thm4.5}]
The proof of \cite[Theorem 1.7 (2)]{park} works. 
We put $\mathscr L:=\omega^{\otimes -1}_{(Y, D)}$. 
By assumption, we have 
\begin{equation}
\kappa (X, \omega_{(X, E)/(Y, D)}\otimes f^*\mathscr L^{\otimes -1})
=\kappa (X, K_X+E)\geq 0. 
\end{equation} 
By Lemma \ref{p-lem4.4}, this implies that there exists a pseudo-effective 
line bundle $\mathscr P$ and a nonzero homomorphism 
\begin{equation}
\omega^{\otimes -r}_{(Y, D)}\otimes \mathscr P 
\to 
\left(\Omega^1_{Y}(\log D)\right)^{\otimes kr}
\end{equation} 
for some $r>0$ and $k\geq 0$. 
Thus $K_Y+D$ is pseudo-effective 
by \cite[Theorem 3.9]{park}, which is 
due to \cite[Theorem 7.6]{campana-paun}. 
We finish the proof. 
\end{proof}

In Section \ref{q-sec5}, we will use Theorem \ref{p-thm4.5} in the 
proofs of Corollary \ref{p-cor1.13} and 
Theorem \ref{p-thm1.14}. 

\section{Proofs}\label{q-sec5}

In this section, we will prove results in Section \ref{p-sec1}. 
Let us start with the proof of Theorem \ref{p-thm1.1}. 

\begin{proof}[Proof of Theorem \ref{p-thm1.1}]
In \cite[Section 2]{park}, this theorem 
is proved under the extra assumption that $f^{-1}(D)=E$ and $g^{-1}(D)=D'$ 
hold. However, we can easily see that 
\cite[Proposition 2.5]{park} implies 
the desired 
inclusion \eqref{p-eq1.1}. Moreover, by the proof in \cite[Section 2]{park}, 
it is easy to see that the inclusion 
\eqref{p-eq1.1} is an isomorphism over 
some nonempty Zariski open subset of $Y$. 
\end{proof}

\begin{proof}[Proof of Corollary \ref{p-cor1.2}]
This corollary is an easy consequence of Theorem \ref{p-thm1.1}. 
All we have to do is to apply Theorem \ref{p-thm1.1} 
repeatedly.  
Note that the inclusion \eqref{p-eq1.2} is an isomorphism 
over some nonempty Zariski open subset of $Y$. 
\end{proof}

We have already proved Theorem \ref{p-thm1.4} 
in Section \ref{p-sec4}. 
Thus, let us prove Theorem \ref{p-thm1.5}. 

\begin{proof}[Proof of Theorem \ref{p-thm1.5}]
If $\kappa (Y, K_Y+D)=\dim Y$, that is, $K_Y+D$ is big, 
then 
\begin{equation}
\kappa (X, K_X+E)=\kappa (F, (K_X+E)|_F)+\dim Y
\end{equation} 
holds by Maehara's 
theorem (see \cite{maehara} and \cite{fujino-notes}). 
On the other hand, if the equality 
\begin{equation}
\kappa (X, K_X+E)=\kappa (F, (K_X+E)|_F)+\dim Y
\end{equation} holds, then 
there exists a positive integer $N$ and 
an ample Cartier divisor $A$ on $Y$ such that 
\begin{equation}
f^*\mathscr O_Y(A)\subset \omega_{(X, E)}^{\otimes N}
\end{equation} 
by \cite[Proposition 1.14]{mori}. 
We put $\mathscr L:=\mathscr O_Y(A)\otimes \omega_{(Y, D)}^{\otimes -N}$. 
Hence we have 
\begin{equation}
\mathscr L^{\otimes N} 
\subset \left( \bigotimes ^N 
f_*\omega_{(X, E)/(Y, D)}^{\otimes N}\right)^{\vee\vee} 
\subset \left( f^{(N)}_*\omega_{(X^{(N)}, 
E^{(N)})/(Y, D)}^{\otimes N}\right)^{\vee\vee}.  
\end{equation} 
Here we used Corollary \ref{p-cor1.2} with $s=N$. 
By Theorem \ref{p-thm1.4}, 
there exist a pseudo-effective line bundle 
$\mathscr P$ on $Y$ and a nonzero homomorphism 
\begin{equation}
\mathscr L^{\otimes r}\otimes \mathscr P\to 
\left(\Omega^1_Y(\log D)\right)^{\otimes kr}
\end{equation} 
for some $r>0$ and $k\geq 0$. 
Hence we have a nonzero homomorphism 
\begin{equation}
\mathscr O_Y(A)^{\otimes r}\otimes \mathscr P\to 
\left(\Omega^1_Y (\log D)\right)^{\otimes kr}\otimes \omega^{\otimes Nr}_{(Y, D)}. 
\end{equation}
Then, by \cite[Theorem 3.9]{park}, which is due 
to \cite[Theorem 7.6]{campana-paun}, 
$K_Y+D$ is pseudo-effective. 
Since we have 
$\omega_{(Y, D)}\subset \left(\Omega^1_Y(\log D)\right)^{\otimes \dim Y}$ 
by definition, 
we have a nonzero homomorphism 
\begin{equation}
\mathscr O_Y(A)^{\otimes r}\otimes \mathscr P\to 
\left(\Omega^1_Y(\log D)\right)^{\otimes N'}
\end{equation} 
for some positive integer $N'$. 
Therefore, by \cite[Theorem 3.8]{park}, which is 
due to \cite[Theorem 1.3]{campana-paun}, $K_Y+D$ is the sum 
of an ample divisor and a pseudo-effective 
divisor, so it is big as desired. We finish the proof.  
\end{proof}

\begin{proof}[Proof of Corollary \ref{p-cor1.7}] 
By the easy addition formula, we have 
\begin{equation}
\dim X=\kappa (X, K_X+E)
=\kappa (F, (K_X+E)|_F)+\dim Y
\end{equation} 
and 
\begin{equation}
\kappa (F, (K_X+E)|_F)=\dim F, 
\end{equation}
where $F$ is a general fiber of $f\colon X\to Y$. 
By Theorem \ref{p-thm1.5}, we obtain 
$\kappa (Y, K_Y+D)=\dim Y$. We finish the proof. 
\end{proof}

\begin{proof}[Proof of Theorem \ref{p-thm1.8}]
We take a sufficiently large and divisible positive integer $N$ such that 
\begin{equation}
f^*\mathscr O_Y(D)\subset \omega^{\otimes N}_{(X, E)}. 
\end{equation} 
As in the proof of Theorem \ref{p-thm1.5} above, 
by Theorem \ref{p-thm1.4}, 
the proof of \cite[Theorem 1.7 (1)]{park} 
works. 
Then we obtain a positive rational number $\delta$ such that 
$K_X+(1-\delta)D$ is pseudo-effective. 
We finish the proof. 
\end{proof}

For the proof of Theorem \ref{p-thm1.12}, 
we prepare the following lemma. 
Note that we need Corollary \ref{p-cor1.2} 
in the proof of Lemma \ref{q-lem5.1}. 

\begin{lem}\label{q-lem5.1}
In Conjecture \ref{p-conj1.9}, we assume $\kappa_\sigma(Y, K_Y+D)=0$. 
Then we have 
\begin{equation}
\kappa (X, K_X+E)\leq \kappa (F, (K_X+E)|_F), 
\end{equation} 
where $F$ is a sufficiently general fiber of $f\colon X\to Y$. 
\end{lem}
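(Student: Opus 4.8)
The plan is to deduce the inequality from the positivity machinery of Theorem \ref{p-thm1.4}, using the fiber product trick of Corollary \ref{p-cor1.2} to manufacture a line bundle on $Y$ that detects the gap $\kappa(X,K_X+E)-\kappa(F,(K_X+E)|_F)$, and then to bound that line bundle by $\kappa_\sigma(Y,K_Y+D)=0$. First I would dispose of the trivial cases: if $\kappa(X,K_X+E)=-\infty$ there is nothing to prove, so assume $\kappa_X:=\kappa(X,K_X+E)\ge 0$, and restricting a pluricanonical section to a general fiber shows $\kappa_F:=\kappa(F,(K_X+E)|_F)\ge 0$ as well. Since all the hypotheses (smoothness of $f$ over $Y\setminus D$, relative normal crossing of $E$, and $f^{-1}(D)\subset E$) are preserved under the birational modifications we are allowed to make, I may pass to a convenient model and form the relative Iitaka fibration of $\omega_{(X,E)}$ over $Y$. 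After such modifications I obtain a tower $X\to V\to Y$ with $p\colon V\to Y$ surjective whose general fiber $V_y$ has dimension $\kappa_F$, together with a line bundle $\mathscr M$ on $V$ that is big on $V_y$ and satisfies $\kappa(X,\omega_{(X,E)})=\kappa(V,\mathscr M)$. Iitaka's easy addition applied to $p$ already yields the weak bound $\kappa_X\le \kappa_F+\dim Y$; the entire point is to replace $\dim Y$ by $\kappa_\sigma(Y,K_Y+D)$.

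The heart of the argument is to realize this replacement as a line bundle on the base. Suppose $\kappa_X>\kappa_F$. The relative Iitaka fibration varies over $Y$ precisely to this extent: the $\kappa_F$-dimensional fibers of the Iitaka model of $(V,\mathscr M)$ sweep out a variety of dimension $\kappa_X$, so the associated variation map on $Y$ has image of dimension at least $\kappa_X-\kappa_F$. Following the Viehweg--Zuo mechanism underlying Theorem \ref{p-thm4.2}, I would convert this variation into genuine positivity by passing to a high fiber power: using Corollary \ref{p-cor1.2} to embed $\left(\bigotimes^s f_*\omega_{(X,E)/(Y,D)}^{\otimes N}\right)^{\vee\vee}$ into $\left(f^{(s)}_*\omega_{(X^{(s)},E^{(s)})/(Y,D)}^{\otimes N}\right)^{\vee\vee}$, and taking determinants of the relevant image sheaves to pass from generic rank growth to an honest line bundle, I aim to produce a line bundle $\mathscr L$ on $Y$ with $\kappa_\sigma(Y,\mathscr L)\ge \kappa_X-\kappa_F>0$ admitting a nonzero homomorphism $\mathscr L^{\otimes N}\to \left(f^{(s)}_*\omega_{(X^{(s)},E^{(s)})/(Y,D)}^{\otimes N}\right)^{\vee\vee}$ for suitable $N$ and $s$. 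Since $f$ has connected fibers, $X^{(s)}$ is a smooth projective variety by Remark \ref{p-rem1.3}, and by Corollary \ref{p-cor1.2} the morphism $f^{(s)}\colon X^{(s)}\to Y$ again satisfies all the hypotheses of Theorem \ref{p-thm1.4}.

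With $\mathscr L$ in hand, I would invoke Theorem \ref{p-thm1.4} for $f^{(s)}$: there exist a pseudo-effective line bundle $\mathscr P$ on $Y$ and a nonzero homomorphism $\mathscr L^{\otimes r}\otimes \mathscr P\to \left(\Omega^1_Y(\log D)\right)^{\otimes kr}$ for some $r>0$ and $k\ge 0$. By the numerical-dimension form of the Campana--Paun results quoted as \cite[Theorem 3.8, Theorem 3.9]{park}, such a factorization forces $\kappa_\sigma(Y,\mathscr L)\le \kappa_\sigma(Y,K_Y+D)$. Combining, I obtain $0<\kappa_X-\kappa_F\le \kappa_\sigma(Y,\mathscr L)\le \kappa_\sigma(Y,K_Y+D)=0$, a contradiction, so $\kappa_X\le \kappa_F$ as claimed.

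The hard part will be the middle step: rigorously producing a line bundle $\mathscr L$ on $Y$ whose numerical dimension is at least $\kappa_X-\kappa_F$ and which genuinely embeds into the Hodge-theoretic pushforward on some fiber power. This is exactly the Viehweg--Zuo positivity-of-variation content, and it is where the easy-addition geometry, the fiber product trick of Corollary \ref{p-cor1.2}, and the weak positivity of Theorem \ref{p-thm4.2}(iv) must be woven together carefully; once $\mathscr L$ is constructed, the final comparison through Theorem \ref{p-thm1.4} and Campana--Paun is essentially formal.
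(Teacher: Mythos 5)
Your overall pipeline is the one the paper intends: the paper's own proof of Lemma \ref{q-lem5.1} simply notes that $K_Y+D$ is pseudo-effective and then cites the proof of \cite[Proposition 5.2]{park}, which runs exactly through the chain (detecting line bundle on $Y$) $\Rightarrow$ (fiber product trick, Corollary \ref{p-cor1.2}) $\Rightarrow$ (Theorem \ref{p-thm1.4}) $\Rightarrow$ (Campana--P\u aun). However, your proposal has a genuine gap at precisely the step you yourself defer as ``the hard part'': producing a line bundle $\mathscr L$ on $Y$ with positivity at least $\kappa(X,K_X+E)-\kappa(F,(K_X+E)|_F)$ together with a nonzero map $\mathscr L^{\otimes N}\to \left(f^{(s)}_*\omega^{\otimes N}_{(X^{(s)},E^{(s)})/(Y,D)}\right)^{\vee\vee}$. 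Leaving this as an aim means the proposal is not a proof, and the mechanism you sketch for it (Iitaka fibers ``sweeping out'' a variety, a ``variation map'' on $Y$, determinants of image sheaves converting generic rank growth into a line bundle) is not how this step works; no Viehweg--Zuo or Hodge-theoretic input is needed here at all. What is actually used, both in the proofs of Theorems \ref{p-thm1.5} and \ref{p-thm1.8} and in \cite[Proposition 5.2]{park}, is a Mori-type statement coming from \cite[Proposition 1.14]{mori} and its proof: if $\kappa(X,K_X+E)\geq \kappa(F,(K_X+E)|_F)+c$ with $c\geq 1$, there exist $N>0$ and a line bundle $M$ on $Y$ with $\kappa(Y,M)\geq c$ and $f^*M\subset \omega_{(X,E)}^{\otimes N}$; one then sets $\mathscr L:=M\otimes\omega_{(Y,D)}^{\otimes -N}$, so that $\mathscr L\subset f_*\omega^{\otimes N}_{(X,E)/(Y,D)}$, and only afterwards applies Corollary \ref{p-cor1.2} and Theorem \ref{p-thm1.4}. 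Note also that this lemma controls the Iitaka dimension $\kappa(Y,M)$, not $\kappa_\sigma(Y,\mathscr L)$ as you demand; the latter follows because $\kappa_\sigma\geq\kappa$, but requiring it directly obscures where the bound actually comes from.

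There is a second gap in your concluding step. You invoke ``the numerical-dimension form of the Campana--P\u aun results quoted as \cite[Theorems 3.8 and 3.9]{park}'' to get $\kappa_\sigma(Y,\mathscr L)\leq \kappa_\sigma(Y,K_Y+D)$. But as those theorems are quoted and used in this paper, they are qualitative: a \emph{big} line bundle mapping nontrivially into $\left(\Omega^1_Y(\log D)\right)^{\otimes m}$ forces $K_Y+D$ to be big, and a \emph{pseudo-effective} line bundle mapping nontrivially into $\left(\Omega^1_Y(\log D)\right)^{\otimes a}\otimes \omega_{(Y,D)}^{\otimes b}$ forces $K_Y+D$ to be pseudo-effective. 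Neither yields the inequality between numerical dimensions on which your contradiction rests, so the point where the hypothesis $\kappa_\sigma(Y,K_Y+D)=0$ is genuinely consumed is left uncovered by your citations. This can be repaired --- one route is the slope form of Campana--P\u aun together with pseudo-effective/movable duality and the monotonicity of $\kappa_\sigma$ in Nakayama's theory, which is in effect what \cite[Proposition 5.2]{park} carries out --- but it has to be argued, not asserted as a quoted theorem.
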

\begin{proof}[Proof of Lemma \ref{q-lem5.1}] 
Note that $K_Y+D$ is pseudo-effective 
by the assumption $\kappa _\sigma(Y, K_Y+D)=0$. 
As in the proofs of Theorems \ref{p-thm1.5} and 
\ref{p-thm1.8}, 
the proof of \cite[Proposition 5.2]{park} 
works. 
We finish the proof. 
\end{proof}

Let us prove Theorem \ref{p-thm1.12}. 

\begin{proof}[Proof of Theorem \ref{p-thm1.12}]
Let $\mu\colon Y'\to Y$ be a projective birational morphism 
from a smooth 
variety $Y'$ such that $D':=\mu^{-1}(D)$ is a simple 
normal crossing divisor on $Y'$. 
We replace $(Y, D)$ with $(Y', D')$ and take suitable 
birational modifications. Then we may assume that 
the Iitaka fibration $\Phi:=\Phi_{|m(K_Y+D)|}\colon Y\to Z$ 
is a morphism onto a normal projective variety $Z$ 
with connected fibers, where 
$m$ is a sufficiently large positive integer. 
Let $(G, D|_G)$ (resp.~$(H, E|_H)$) 
be a sufficiently general fiber of $\Phi$ (resp.~$\Phi\circ f$), 
that is, $G=\Phi^{-1}(z)$ and $H=(\Phi\circ f)^{-1}(z)$, 
where $z$ is a sufficiently general point of $Z$.
Note that $G$ and $H$ are smooth projective varieties and 
$D|_G$ and $E|_H$ are simple 
normal crossing divisors. 
By construction, we see that 
$f|_H\colon H\to G$ satisfies that 
$f|_H$ is smooth over $G\setminus (D|_G)$, $E|_H$ is relatively 
normal crossing over $G\setminus (D|_G)$, and $(f|_H)^{-1}(D|_G)\subset E|_H$. 
By assumption, 
\begin{equation} 
\kappa _\sigma(G, K_G+D|_G)=
\kappa (G, K_G+D|_G)=0. 
\end{equation}  
By Lemma \ref{q-lem5.1}, 
\begin{equation}
\kappa (H, K_H+E|_H)\leq \kappa (F, (K_X+E)|_F)
\end{equation}
holds. Therefore, by applying the easy addition formula to 
$\Phi\circ f\colon X\to Z$, 
\begin{equation}
\begin{split}
\kappa (X, K_X+E)&\leq \kappa (H, K_H+E|_H)+\dim Z \\ 
&\leq \kappa (F, (K_X+E)|_F)+\kappa (Y, K_Y+D). 
\end{split} 
\end{equation} 
We finish the proof. 
\end{proof}

We need Gongyo's theorem for the proof of Corollary \ref{p-cor1.13}. 

\begin{thm}[{see 
\cite[Proposition 4.1]{fujino-subadditivity}}]\label{q-thm5.2}
Let $X$ be a smooth projective variety and let $E$ be a simple 
normal crossing divisor on $X$. 
Assume that there exists a projective birational morphism 
$\varphi\colon X\setminus E\to V$ onto an affine variety $V$. 
Then $\kappa _\sigma(X, K_X+E)=\kappa (X, K_X+E)$ 
holds, that is, the generalized abundance conjecture holds 
for $(X, E)$. In particular, if $K_X+E$ is pseudo-effective, 
then $\kappa (X, K_X+E)\geq 0$. 
\end{thm}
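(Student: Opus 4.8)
The plan is to establish the full abundance equality $\kappa(X,K_X+E)=\kappa_\sigma(X,K_X+E)$; the ``in particular'' clause then follows immediately, since $K_X+E$ is pseudo-effective exactly when $\kappa_\sigma(X,K_X+E)\ge 0$, and abundance upgrades this to $\kappa(X,K_X+E)\ge 0$. As the inequality $\kappa\le\kappa_\sigma$ holds in general, I only need to prove $\kappa_\sigma\le\kappa$. The guiding principle is to use the morphism $\varphi$ to show that $E$ is a \emph{big} boundary, and then to invoke abundance for pairs with big boundary coming from the minimal model program.

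First I would convert the hypothesis into positivity of a divisor supported on $E$. Compactify $V$ by a normal projective variety $\overline V$ whose boundary $\overline V\setminus V$ is the support of an ample divisor $H$; this is possible because $V$ is affine. Pulling $H$ back along the rational map $X\dashrightarrow\overline V$ extending $\varphi$ --- a Goodman-type argument, for which only codimension-one information is needed --- I would obtain an effective divisor $A$ on $X$ that is big and semiample with $\Supp(A)\subseteq E$: bigness reflects that $\varphi$ is birational, and $\Supp(A)\subseteq E$ holds because $\varphi$ maps $X\setminus E$ into $V$, so the total transform of the boundary of $\overline V$ is supported in $E$. Since $\Supp(A)\subseteq E$ and $E$ is reduced, $E-\varepsilon A\ge 0$ for $0<\varepsilon\ll 1$; thus $E\ge\varepsilon A$ with $\varepsilon A$ big, and therefore $E$ is big.

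With $E$ big, the log smooth --- hence dlt --- pair $(X,E)$ has big boundary. A standard perturbation now reduces to the klt case: using Kodaira's lemma to write $E\sim_{\mathbb R}H'+G$ with $H'$ ample and $G\ge0$, and replacing $E$ by $\Delta:=(1-\delta)E+\delta(H'+G)$ for suitably general choices and $0<\delta\ll1$, I would obtain a klt pair $(X,\Delta)$ with $\Delta$ big and $\Delta\sim_{\mathbb R}E$, so that $K_X+\Delta\sim_{\mathbb R}K_X+E$. For a klt pair with big boundary the minimal model program yields a good minimal model, on which $K_X+\Delta$ is semiample; hence $K_X+\Delta$ is abundant and $\kappa(X,K_X+\Delta)=\kappa_\sigma(X,K_X+\Delta)$. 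Because $\kappa$ is invariant under $\mathbb R$-linear equivalence and $\kappa_\sigma$ under numerical equivalence, this gives $\kappa(X,K_X+E)=\kappa_\sigma(X,K_X+E)$, i.e.\ Conjecture \ref{p-conj1.10} for $(X,E)$. (Alternatively, one may apply the abundance theorem for lc pairs with big boundary directly to $(X,E)$ and skip the perturbation.)

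I expect the main obstacle to be the second step: making the passage from the abstract hypothesis on $\varphi$ to an honest big divisor supported on $E$ fully rigorous, since the rational map $X\dashrightarrow\overline V$ need not be a morphism. The cleanest route is probably to resolve the graph of $\varphi$, perform the pullback of $H$ there, and then push the resulting positivity down to $X$, checking throughout that the divisor produced has support inside $E$ and remains big. By contrast, the abundance input used in the final step is, at this point, a standard consequence of the minimal model program.
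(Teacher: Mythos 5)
Your plan --- use $\varphi$ and a compactification of $V$ to manufacture an effective big divisor supported on $E$, then conclude by the minimal model program for a pair with big boundary --- is the same outline as the argument the paper relies on (the paper's own proof is a single sentence deferring to the proof of \cite[Proposition 4.1]{fujino-subadditivity}, which is exactly of this Goodman-plus-MMP type). Your first step is essentially correct: resolving the graph of $\varphi$ to get $p\colon W\to X$, $q\colon W\to \overline V$ with $p$ an isomorphism over $X\setminus E$, the divisor $q^*H$ is big with $\Supp q^*H\subseteq p^{-1}(E)$ (in fact $\Supp q^*H=p^{-1}(E)$, because properness of $\varphi$ forces $q^{-1}(V)=p^{-1}(X\setminus E)$), so $A=p_*q^*H$ is effective, big, and supported on $E$; hence $E$ is big. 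One inaccuracy: $A$ is \emph{not} semiample in general, since pushforward under a birational morphism destroys semiampleness --- and this loss is precisely what undermines your second step.

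The genuine gap is the klt perturbation. For $E\sim_{\mathbb Q}H'+G$ from Kodaira's lemma, the pair $\Delta:=(1-\delta)E+\delta(H'+G)$ need not be klt --- indeed need not even be lc --- for any $\delta>0$ and any ``general'' choices. If $\nu$ is a divisorial valuation with $a(\nu;X,E)=-1$ (e.g.\ the blow-up of a stratum of $E$), then
\begin{equation}
a(\nu;X,\Delta)=-1+\delta\bigl(\mult_\nu E-\mult_\nu(H'+G)\bigr),
\end{equation}
which is $<-1$ as soon as $\mult_\nu(H'+G)>\mult_\nu E$, e.g.\ when $G$ has multiplicity $3$ at a point where only two branches of $E$ cross. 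Generality cannot prevent this: $G$ must be (a multiple of) a member of the linear system $|mE-H'|$, and a general member still carries the full multiplicity of that system's base locus, which may contain strata of $E$. This is the standard pitfall in perturbing lc (as opposed to klt) pairs via Kodaira's lemma. There are two honest repairs. (a) Do not push $q^*H$ down to $X$: after a further log resolution, assume $E_W:=p^{-1}(E)_{\red}$ is simple normal crossing; on $W$ the divisor $B:=q^*H$ is big \emph{and semiample} with $\Supp B=E_W$, so a general member $G'\in |mB|$ is transverse to $E_W$ by Bertini, the pair $\bigl(W,(E_W-\varepsilon B)+\tfrac{\varepsilon}{m}G'\bigr)$ is genuinely klt with big boundary, and its log canonical divisor is $\mathbb Q$-linearly equivalent to $K_W+E_W$; apply BCHM there and descend via $K_W+E_W=p^*(K_X+E)+F$ with $F\geq 0$ $p$-exceptional, which changes neither $\kappa$ nor $\kappa_\sigma$. (b) Your parenthetical alternative: since $(X,E)$ is log smooth, hence lc, and $E$ is big, invoke the existence of good minimal models or Mori fiber spaces for lc pairs with big boundary (Birkar; Hacon--Xu); this is a citable theorem and closes the proof at once, but it is much heavier machinery than the klt case actually needed here. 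Either repair also lets you work with $\mathbb Q$-linear equivalence throughout, avoiding the side issue that the naive $\kappa$ is not invariant under $\mathbb R$-linear equivalence.
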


\begin{proof}[Proof of Theorem \ref{q-thm5.2}]
This theorem is an easy application of the minimal model program. 
For the details, see the proof of 
\cite[Proposition 4.1]{fujino-subadditivity}. 
\end{proof}

\begin{proof}[Proof of Corollary \ref{p-cor1.13}]
If $\kappa (X, K_X+E)=-\infty$, then \eqref{p-eq1.3} is 
obvious. Hence we may assume that 
$\kappa (X, K_X+E)\geq 0$. By Theorems \ref{p-thm4.5} and 
\ref{q-thm5.2}, we have $\kappa (Y, K_Y+D)\geq 0$. 
We can apply Theorem \ref{q-thm5.2} 
to a sufficiently general fiber of the Iitaka fibration 
of $Y$ with respect to $K_Y+D$. 
Therefore, by Theorem \ref{p-thm1.12}, 
we obtain the desired inequality \eqref{p-eq1.3}.  
\end{proof}

Finally, we prove Theorem \ref{p-thm1.14}. 

\begin{proof}[Proof of Theorem \ref{p-thm1.14}]
The subadditivity 
\begin{equation}
\kappa (X, K_X+E) \geq \kappa (Y, K_Y+D)+\kappa (F, (K_X+E)|_F)
\end{equation} 
follows from $\kappa _\sigma(X, K_X+E)=\kappa (X, K_X+E)$ 
(see \cite{fujino-subadditivity} and \cite{fujino-corrigendum}) 
or $\kappa _\sigma(F, (K_X+E)|_F)=\kappa (F, (K_X+E)|_F)$ 
(see \cite{hashizume}). 
Therefore, from now on, we will prove 
the superadditivity 
\begin{equation}\label{q-eq5.1}
\kappa (X, K_X+E) \leq \kappa (Y, K_Y+D)+\kappa (F, (K_X+E)|_F). 
\end{equation} 
If $\kappa (X, K_X+E)=-\infty$, then \eqref{q-eq5.1} is obviously 
true. Hence we may assume that $\kappa (X, K_X+E)\geq 0$ holds. 
By Theorem \ref{p-thm4.5}, $K_Y+D$ is pseudo-effective. 
By Conjecture \ref{p-conj1.11}, which is a special case 
of Conjecture \ref{p-conj1.10}, 
we have $\kappa (Y, K_Y+D)\geq 0$. 
Thus, by Theorem \ref{p-thm1.12} and Conjecture \ref{p-conj1.10}, 
we obtain the desired superadditivity \eqref{q-eq5.1}. 
We finish the proof. 
\end{proof}

\section{On variations of mixed Hodge structure}\label{x-sec4}

In this section, for the sake of 
completeness, we will explain the following theorem, 
which is more or less well known to the experts 
(see \cite{steenbrink-zucker}, \cite{elzein}, and so on). 
Theorem \ref{x-thm4.1} has already been used in the proof of Theorem 
\ref{p-thm4.2} and is one of the main ingredients of 
Theorem \ref{p-thm4.2}. 

\begin{thm}\label{x-thm4.1}
Let $f\colon X\to Y$ be a proper surjective morphism
from a K\"ahler manifold $X$ to a complex manifold $Y$ with
$d=\dim X-\dim Y$ and let $\Sigma_X$ and 
$\Sigma_Y$ be
reduced simple normal crossing divisors on $X$ and $Y$, respectively. 
We set $Y_0=Y \setminus \Sigma_Y, X_0=f^{-1}(Y_0), 
f_0=f|_{X_0} \colon X_0 \to Y_0$ and $U=X \setminus \Sigma_X$
and assume that $f_0$ is a smooth morphism,
$\Sigma_X \cap X_0$ is relatively normal crossing over $Y_0$,
and $\Supp f^*\Sigma_Y\subset 
\Supp \Sigma_X$ holds. 
Then the local system
$R^i(f_0|_U)_*\mathbb R_U$
underlies a graded polarizable
admissible variation of $\mathbb R$-mixed 
Hodge structure on $Y_0$ for every $i$ 
such that
the Hodge filtration $F$ on
$\mathscr V=\mathscr{O}_{Y_0} \otimes R^i(f_0|_U)_*\mathbb R_U$
extends to a filtration $F$
on the lower canonical extension ${}^\ell \mathscr V$
satisfying the following conditions:
\begin{enumerate}
\item
\label{item:7}
$\Gr_F^p\Gr_m^W({}^\ell \mathscr V)$
is locally free of finite rank for every $m,p$, and
\item
\label{item:9}
$\Gr_F^p({}^\ell \mathscr V)$
coincides with
$R^{i-p}f_*\Omega_{X/Y}^p(\log \Sigma_X)$ for every $p$,
after removing some suitable codimension two closed subset from $Y$.
\end{enumerate} 
In the above statement, if $\Sigma_X \cap X_0=0$,
then
$R^i(f_0|_U)_*\mathbb R_U=R^i(f_0)_*\mathbb R_{X_0}$
underlies a polarizable variation
of $\mathbb{R}$-Hodge structure on $Y_0$ for every $i$. 
\end{thm}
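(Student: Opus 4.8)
The plan is to establish Theorem \ref{x-thm4.1} by reducing everything to the analysis of a single family of (log) smooth morphisms and then invoking the general machinery of admissible variations of mixed Hodge structure together with the degeneration of a suitable Hodge-theoretic spectral sequence. First I would set up the geometric picture carefully: over the smooth locus $Y_0$, the restriction $f_0|_U\colon U\cap X_0\to Y_0$ is a smooth morphism whose fibers are the complements $U\cap f_0^{-1}(y)$ of the relatively normal crossing divisor $\Sigma_X\cap X_0$ inside the smooth fibers of $f_0$. The higher direct image $R^i(f_0|_U)_*\mathbb R_U$ is therefore a local system on $Y_0$ whose stalk at $y$ is the $i$-th cohomology of such an open (quasi-projective, or at least log smooth Kähler) variety. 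The first key step is to recall that Deligne's theory endows the cohomology of each such fiber with a canonical mixed Hodge structure and, crucially, that these fit together into a \emph{graded polarizable admissible} variation of $\mathbb R$-mixed Hodge structure as the base point varies; this is the content to be attributed to the references \cite{steenbrink-zucker} and \cite{elzein}. Admissibility is automatic here because the situation is globally of geometric origin, so the local monodromy is quasi-unipotent and the relative weight filtrations exist.

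The second step is to identify the graded pieces of the Hodge filtration with the relative logarithmic Hodge sheaves. Over $Y_0$, on the smooth locus where $\Sigma_X\cap X_0$ is relatively normal crossing, one has the relative logarithmic de Rham complex $\Omega^\bullet_{X/Y}(\log\Sigma_X)$, and the Hodge filtration on $R^i(f_0|_U)_*\mathbb C_U\otimes\mathscr O_{Y_0}$ is the one induced by the stupid (Hodge) filtration on this complex. The degeneration at $E_1$ of the relative Hodge-to-de Rham spectral sequence — which holds fiberwise by Deligne's mixed Hodge theory and hence relatively after shrinking $Y$ — yields the identification
\begin{equation}
\Gr_F^p\left(\mathscr V\right)\simeq R^{i-p}f_*\Omega^p_{X/Y}(\log\Sigma_X)
\end{equation}
over $Y_0$. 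I would then invoke the theory of the lower canonical (Deligne) extension, as recalled in Subsection \ref{q-subsec2.3}, to extend both $\mathscr V$ and the filtration $F$ across $\Sigma_Y$: since the monodromies are quasi-unipotent, the Hodge subbundles $F^p$ extend to subbundles ${}^\ell F^p$ of ${}^\ell\mathscr V$, which gives condition \ref{item:7} (local freeness of $\Gr_F^p\Gr^W_m$) by the admissibility-type regularity of the extended filtration. The content of \cite{steenbrink-zucker} and \cite{elzein} is precisely that this extended Hodge filtration agrees with the one computed by the logarithmic relative de Rham complex on the whole of $X$, giving \ref{item:9} after discarding a codimension two bad locus where the relevant sheaves may fail to be locally free or where $f$ is not sufficiently nice.

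The main obstacle, as I see it, is the comparison over the boundary $\Sigma_Y$ rather than over $Y_0$: proving that the \emph{lower canonical extension} of the Hodge filtration is computed by $R^{i-p}f_*\Omega^p_{X/Y}(\log\Sigma_X)$ globally (up to codimension two) requires controlling the behavior of the logarithmic relative de Rham complex and its hypercohomology near the singular fibers, where the nilpotent orbit theorem and the local monodromy weight filtration enter. The hypothesis $\Supp f^*\Sigma_Y\subset\Supp\Sigma_X$ is exactly what guarantees that the degenerate fibers are contained in the logarithmic pole divisor, so that the logarithmic relative complex remains the right object. Handling this carefully is why one restricts to the unipotent case (where ${}^\ell\mathscr V=\mathscr V$) after a Kawamata covering, and why one is content to remove a codimension two subset. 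Finally, the addendum when $\Sigma_X\cap X_0=\varnothing$ is immediate: the fibers of $f_0$ are then compact Kähler (indeed $R^i(f_0|_U)_*\mathbb R_U=R^i(f_0)_*\mathbb R_{X_0}$), so classical Hodge theory gives a \emph{pure} polarizable variation of $\mathbb R$-Hodge structure of weight $i$, with no weight filtration needed.
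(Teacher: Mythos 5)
Your proposal has the right overall silhouette (fiberwise Deligne mixed Hodge structures, Hodge filtration via the relative log de Rham complex, lower canonical extension, a codimension-two bad locus), but the two steps that carry the real weight of the theorem are missing. First, the boundary comparison: you defer the extension of $F$ to ${}^\ell\mathscr V$ and the identification \ref{item:9} to \cite{steenbrink-zucker} and \cite{elzein}, but those references treat one-dimensional bases (a disc, with unipotent monodromy in El Zein's case); they cannot by themselves produce the statement over a base $Y$ of arbitrary dimension. The paper's actual mechanism is different: it builds a filtered triple whose real structure comes from the Koszul complex $\kos(\mathcal M)$ (Lemmas \ref{lem:2}, \ref{lem:1}, \ref{lem:5}), equips $Rf_*\Omega_{X/Y}(\log\Sigma_X)$ with the weight filtration $W(D)$ whose graded pieces are pushforwards from the smooth proper strata $D^{(m)}$, and then applies \cite[Theorem 3.9]{fujino-fujisawa3} under two auxiliary hypotheses ($\Sigma_Y$ a smooth hypersurface, $\Omega^1_{D^{(m)}/Y}(\log E\cap D^{(m)})$ locally free); by \cite[Lemma 4.6]{fujino-fujisawa3} these hypotheses hold after removing a codimension-two subset of $\Sigma_Y$, and the extension of $F$ over all of $Y$ then requires Schmid's nilpotent orbit theorem on the $W(D)$-graded pieces together with \cite[Lemma 1.11.2]{kashiwara}. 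Nothing in your sketch supplies a substitute for this machinery. Relatedly, you never construct the weight filtration at all, so graded polarizability --- which the paper obtains precisely from the identification $\Gr_m^{W(D)}\simeq (a_m)_*(\cdots)$ with the strata, giving pure polarizable variations of weight $m+i$ --- is unaddressed in your argument.

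Second, admissibility: asserting that it is ``automatic because the situation is globally of geometric origin'' is circular, since admissibility of this geometric variation is exactly what must be proved, and it is here (not in the boundary comparison) that \cite{elzein} and \cite{steenbrink-zucker} genuinely enter. The paper's route is to observe that admissibility is a local condition on curves, reduce to $(Y,\Sigma_Y)=(\Delta,\{0\})$, perform the base change $t\mapsto t^m$ to make the monodromy unipotent --- legitimate by \cite[Lemma 1.9.1]{kashiwara} and the behavior of relative weight filtrations under such pullbacks --- and only then invoke El Zein's theorem. Your appeal to a ``Kawamata covering'' is also misplaced: $Y$ is merely a complex manifold, not quasi-projective, so no such global covering exists, and in any case the paper proves \ref{item:7} and \ref{item:9} directly with quasi-unipotent monodromy via the lower canonical extension, with no global unipotent reduction. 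Only your final remark (the pure case when $\Sigma_X\cap X_0=0$) is complete as stated.
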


In this section, we adopt the same approach 
as in Sections 3, 4, and 7 in \cite{fujino-fujisawa3}. 
Before starting the proof of the theorem above,
we recall several facts concerning on the Koszul complex. 

\begin{say}
In the situation above,
we set $E=(f^*\Sigma_Y)_{\red}$ and $D=\Sigma_X-E$.
Then $D$ and $E$ are
reduced simple normal crossing divisors on $X$
with no common irreducible components.
The open immersions
$X \setminus D \hookrightarrow X$
and $U \hookrightarrow X_0$
are denoted by $j$ and $j_0$ respectively.
The situation is summarized in the commutative diagram
\begin{equation}
\xymatrix{
U \ar[d]_-{j_0}\ar[r]& X \setminus D \ar[d]^-j&  \\
X_0 \ar[d]_-{f_0}\ar[r]& X \ar[d]^-f& E\ar[l]\ar[d] \\
Y_0\ar[r] & Y &\Sigma_Y\ar[l]
}
\end{equation}
where the left two squares are Cartesian.

We denote by $D=\sum_{i=1}^{l}D_i$
the irreducible decomposition of $D$
and set
\begin{equation}
D^{(m)}=\coprod_{1 \le i_1 < 
\dots < i_m \le l}D_{i_1} \cap \cdots \cap D_{i_m}
\end{equation}
for $m \in \mathbb{Z}_{\ge 0}$.
(For the case of $m=0$, we set $D^{(0)}=X$ by definition.)
The natural morphism from $D^{(m)}$ to $X$ is denoted by $a_m$.

In order to define the desired weight filtration on
\begin{equation}
R^i(f_0|_U)_*\mathbb R_U
\simeq \mathbb{R} \otimes R^i(f_0|_U)_*\mathbb Q_U
\simeq \mathbb{R} \otimes R^i(f_0)_*(R(j_0)_*\mathbb Q_U)
\end{equation}
we replace $R(j_0)_*\mathbb{Q}_U$
by a Koszul complex as follows.
For the detail, see \cite[Sections 1 and 2]{Fujisawa}
(cf.~\cite{Illusie}, \cite{Steenbrink}, \cite[Section 7]{fujino-fujisawa3}).

The divisor $D$ on $X$
defines a log structure $\mathcal{M}$ by
$\mathcal{M}:=
\mathscr{O}_X
\cap
j_*\mathscr{O}^*_{X \setminus D}$
on $X$.
A morphism of abelian sheaves
$\mathscr{O}_X \to \mathcal{M}\gp$
is defined as the composite of the exponential map
$\mathscr{O}_X \ni a
\mapsto e^{2\pi\sqrt{-1}a} \in \mathscr{O}^*_X$
and the inclusion
$\mathscr{O}^*_X \hookrightarrow \mathcal{M}\gp$.
From the morphism
$\mathbf{e} \otimes \id
\colon \mathscr{O}_X \simeq \mathscr{O}_X \otimes \mathbb{Q}
\to \mathcal{M}\gp \otimes \mathbb{Q}$,
$1 \in \Gamma(X, \mathbb{Q})$
which is a global section of the kernel of $\mathbf{e} \otimes \id$,
and a subsheaf
$\mathscr{O}_X^* \otimes \mathbb{Q}
\subset \mathcal{M}\gp \otimes \mathbb{Q}$,
we obtain a complex of $\mathbb{Q}$-sheaves on $X$
\begin{equation}
\kos(\mathcal{M}):=\kos(\mathbf{e} \otimes \id; \infty ; 1)
\end{equation}
equipped with a finite increasing filtration
$W:=W(\mathscr{O}_X^* \otimes \mathbb{Q})$
as in \cite[Definition 1.8]{Fujisawa} (see also 
\cite[Section 7]{fujino-fujisawa3}).
By replacing $\mathcal{M}\gp$ by $\mathscr{O}_{X \setminus D}^*$,
we obtain a complex of $\mathbb{Q}$-sheaves on $X \setminus D$,
denoted by $\kos(\mathscr{O}_{X \setminus D}^*)$,
by the same way as above.
Moreover,
we have a morphism of complexes of $\mathbb{Q}$-sheaves
\begin{equation}
\psi \colon \kos(\mathcal{M}) \to \Omega_X(\log D)
\end{equation}
as in \cite[(2.4)]{Fujisawa},
which preserves the filtration $W$ on the both sides.
\end{say}

The following two lemmas
are more or less the same as Lemmas 7.6 and 7.7
in \cite{fujino-fujisawa3}.

\begin{lem}
\label{lem:2}
There exists a quasi-isomorphism of complexes
\begin{equation}
\label{eq:1}
(a_m)_*\mathbb{Q}_{D^{(m)}}[-m]
\to
\Gr_m^W\kos(\mathcal{M})
\end{equation}
for all $m \in \mathbb{Z}_{\ge 0}$.
\end{lem}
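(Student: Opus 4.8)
The plan is to establish the quasi-isomorphism \eqref{eq:1} by reducing to an explicit local computation and then invoking the canonicity of the residue maps to patch. Since a morphism of complexes of sheaves is a quasi-isomorphism exactly when it induces isomorphisms on all cohomology sheaves, and this may be tested stalkwise, I would work in a coordinate polydisk $X=\Delta^n$ on which $D=\{z_1\cdots z_r=0\}$ with $D_i=\{z_i=0\}$. In this chart the strata are indexed by subsets $S\subset\{1,\dots,r\}$: the part of $D^{(m)}$ meeting the chart is the disjoint union over $|S|=m$ of $D_S:=\bigcap_{i\in S}D_i$, and the map $a_m$ is the evident closed immersion.

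First I would unwind the definition of $\kos(\mathcal{M})$ from \cite[Definition 1.8]{Fujisawa} together with its weight filtration $W=W(\mathscr{O}_X^*\otimes\mathbb{Q})$, and compute the associated graded complex $\Gr_m^W\kos(\mathcal{M})$. The key local input is the structure of the log structure: $(\mathcal{M}\gp/\mathscr{O}_X^*)\otimes\mathbb{Q}\simeq (a_1)_*\mathbb{Q}_{D^{(1)}}$, the sheaf recording multiplicities along the components $D_i$. Passing to the $m$-th graded piece of the Koszul filtration then picks out the $m$-fold exterior contributions of $(a_1)_*\mathbb{Q}_{D^{(1)}}$, which correspond combinatorially to the choices of $m$ components $D_{i_1},\dots,D_{i_m}$, i.e.\ to the sheaf $(a_m)_*\mathbb{Q}_{D^{(m)}}$.

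Next I would identify, stratum by stratum, the resulting graded complex with a Koszul-type resolution of the constant sheaf $\mathbb{Q}_{D_S}$ placed in degree $m$. Concretely, one shows that the differential induced on $\Gr_m^W\kos(\mathcal{M})$ makes the complex acyclic away from degree $m$, with cohomology sheaf in degree $m$ equal to $(a_m)_*\mathbb{Q}_{D^{(m)}}$; the morphism \eqref{eq:1} is then the inclusion of this cohomology realized by the residue. This is the rational (Betti) analog of the classical residue isomorphism $\Gr_m^W\Omega_X^\bullet(\log D)\simeq (a_m)_*\Omega_{D^{(m)}}^\bullet[-m]$, and the weight-preserving comparison map $\psi\colon\kos(\mathcal{M})\to\Omega_X(\log D)$ furnishes a consistency check: composing \eqref{eq:1} with $\Gr_m^W\psi$ should recover the natural inclusion $(a_m)_*\mathbb{Q}_{D^{(m)}}[-m]\hookrightarrow (a_m)_*\Omega_{D^{(m)}}^\bullet[-m]$ of constants into the holomorphic de Rham complex.

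Finally, because the residue maps used to build \eqref{eq:1} are intrinsic and independent of the chosen coordinates, the local quasi-isomorphisms patch to a global morphism of complexes, which is a quasi-isomorphism since it is one on every stalk. I expect the main obstacle to lie in the first two steps: carefully transporting the definition of $\kos(\mathcal{M})$ and of $W$ through the exterior-power formalism so as to read off $\Gr_m^W\kos(\mathcal{M})$ precisely, and in particular verifying that the induced differential degenerates so that cohomology survives only in degree $m$. The remaining content is either combinatorial or follows the pattern of \cite[Lemma 7.6]{fujino-fujisawa3}.
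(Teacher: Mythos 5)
Your proposal is correct and is essentially the paper's own argument: the paper disposes of Lemma \ref{lem:2} by citing \cite[Lemma 7.6]{fujino-fujisawa3} (which in turn rests on the Koszul formalism of \cite{Fujisawa}), and the proof there is exactly the computation you outline --- $\Gr_m^W\kos(\mathcal{M})$ vanishes in degrees $<m$ and is identified with $\wedge^m\bigl((\mathcal{M}\gp/\mathscr{O}_X^*)\otimes\mathbb{Q}\bigr)\simeq (a_m)_*\mathbb{Q}_{D^{(m)}}$ tensored with the acyclic weight-zero Koszul factor resolving the constant sheaf, so the cohomology is $(a_m)_*\mathbb{Q}_{D^{(m)}}$ concentrated in degree $m$. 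One small correction: the map \eqref{eq:1} is defined by wedging the canonical generators of $\mathcal{M}\gp/\mathscr{O}_X^*$ (no residues occur on the Koszul side, and coordinate-independence comes from the log structure itself); residues, together with the normalizing factor $(2\pi\sqrt{-1})^{-m}$ that your consistency check omits, enter only in the comparison of Lemma \ref{lem:1}.
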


\begin{lem}
\label{lem:1}
The quasi-isomorphism \eqref{eq:1} makes the diagram 
\begin{equation}
\xymatrix{
(a_m)_*\mathbb{Q}_{D^{(m)}}[-m]
\ar[d]_-{(2\pi \sqrt{-1})^{-m}\iota[-m]}\ar[r]
&
\Gr_m^W\kos(\mathcal{M}) \ar[d]^-{\Gr_m^W\psi}\\
(a_m)_*\Omega_{D^{(m)}}[-m]\ar[r]
& 
\Gr_m^W\Omega_X(\log D)
}
\end{equation}
commutative,
where the bottom arrow is the inverse of the usual
residue isomorphism
and $\iota$ is the composite of the natural morphisms
$\mathbb{Q}_{D^{(m)}} \hookrightarrow \mathbb{C}_{D^{(m)}}$
and $\mathbb{C}_{D^{(m)}} \to \Omega_{D^{(m)}}$.
Consequently, the morphism $\psi$ induces a filtered quasi-isomorphism
\begin{equation}
\mathbb{C} \otimes_{\mathbb{Q}} \kos(\mathcal{M})
\to
\Omega_X(\log D)
\end{equation}
with respect to the filtration $W$ on the both sides.
\end{lem}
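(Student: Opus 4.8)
The plan is to prove the asserted commutativity of the diagram by a purely local computation on $X$, and then to deduce the filtered quasi-isomorphism formally from that commutativity together with Lemma~\ref{lem:2} and the holomorphic Poincaré lemma.

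First I would reduce to a local statement. Both the quasi-isomorphism \eqref{eq:1} and the morphism $\psi$ are defined sheaf-theoretically, so the commutativity of the square may be checked in a neighborhood of an arbitrary point of $X$. Choosing local coordinates $z_1,\dots,z_n$ with $D=\{z_1\cdots z_r=0\}$, the log structure $\mathcal{M}$ is generated by $\mathscr{O}_X^*$ together with $z_1,\dots,z_r$, and a direct summand of $\Gr_m^W\kos(\mathcal{M})$ is indexed by the subsets $\{i_1<\dots<i_m\}\subset\{1,\dots,r\}$, matching the components of $D^{(m)}$ passing through the point. Under \eqref{eq:1} the generator $1\in\mathbb{Q}_{D^{(m)}}$ of such a summand corresponds to the Koszul symbol built from $z_{i_1},\dots,z_{i_m}$, exactly as in \cite[Lemma 7.6]{fujino-fujisawa3}.

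The heart of the matter is to evaluate $\Gr_m^W\psi$ on this symbol and compare it with the inverse residue isomorphism. The morphism $\psi$ of \cite[(2.4)]{Fujisawa} sends the symbol of the log-structure section $z_{i_j}$ to the logarithmic form $\frac{dz_{i_j}}{z_{i_j}}$. Because the $\mathbb{Q}$-structure on $\kos(\mathcal{M})$ is built from the exponential $\mathbf{e}\otimes\id\colon\mathscr{O}_X\to\mathcal{M}\gp\otimes\mathbb{Q}$, $a\mapsto e^{2\pi\sqrt{-1}a}$, the rational generator $1\in\mathbb{Q}_{D^{(m)}}$ singled out by \eqref{eq:1} corresponds to the Koszul symbol normalized by one factor of $(2\pi\sqrt{-1})^{-1}$ in each of the $m$ branches. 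Applying $\Gr_m^W\psi$ therefore yields $(2\pi\sqrt{-1})^{-m}$ times $\frac{dz_{i_1}}{z_{i_1}}\wedge\cdots\wedge\frac{dz_{i_m}}{z_{i_m}}$, which is precisely $(2\pi\sqrt{-1})^{-m}$ times the image of $1$ under the inverse residue isomorphism, and this is the asserted commutativity. I expect the bookkeeping of this constant $(2\pi\sqrt{-1})^{-m}$ and of the signs arising from the ordering of the branches to be the main obstacle, and I would resolve it by following the normalization in \cite[Lemma 7.7]{fujino-fujisawa3} verbatim.

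For the concluding assertion I would argue formally. The top horizontal arrow is a quasi-isomorphism by Lemma~\ref{lem:2}, the bottom horizontal arrow is the inverse of the residue isomorphism and hence an isomorphism of complexes, and after tensoring with $\mathbb{C}$ the left vertical arrow $(2\pi\sqrt{-1})^{-m}\iota[-m]$ is a quasi-isomorphism by the holomorphic Poincaré lemma applied on each stratum $D^{(m)}$. Commutativity of the square then forces $\Gr_m^W(\mathbb{C}\otimes\psi)$ to be a quasi-isomorphism for every $m$. Since $W$ is a finite increasing filtration, a filtered morphism inducing quasi-isomorphisms on all graded quotients is a filtered quasi-isomorphism (by comparison of the associated spectral sequences, or equivalently by induction on the length of $W$ using the five lemma). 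This yields the filtered quasi-isomorphism $\mathbb{C}\otimes_{\mathbb{Q}}\kos(\mathcal{M})\to\Omega_X(\log D)$ and completes the proof.
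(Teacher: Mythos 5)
Your proposal is correct, but there is nothing in the paper to compare it against line by line: the paper gives no proof of Lemma \ref{lem:1} at all, remarking only that Lemmas \ref{lem:2} and \ref{lem:1} are \lq\lq more or less the same as\rq\rq\ Lemmas 7.6 and 7.7 of \cite{fujino-fujisawa3}. Your argument supplies exactly the content that citation delegates: the reduction to a local coordinate computation identifying the image of $1\in\mathbb{Q}_{D^{(m)}}$ under \eqref{eq:1} with a Koszul symbol, the evaluation of $\Gr_m^W\psi$ on that symbol producing the factor $(2\pi\sqrt{-1})^{-m}$ that matches the inverse residue isomorphism, and the formal deduction of the filtered quasi-isomorphism from Lemma \ref{lem:2}, the holomorphic Poincar\'e lemma on each $D^{(m)}$, exactness of $\otimes_{\mathbb{Q}}\mathbb{C}$ and of $(a_m)_*$, and the finiteness of $W$ (which, with the standard definition of filtered quasi-isomorphism via graded pieces, already suffices). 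The only point to watch is the placement of the $2\pi\sqrt{-1}$ normalization --- whether it sits in the definition of $\psi$ itself, as in \cite[(2.4)]{Fujisawa}, or in the rational generator singled out by \eqref{eq:1}, as you phrase it; since the two conventions yield the same composite and you explicitly defer to the normalization of \cite{fujino-fujisawa3}, this is bookkeeping rather than a gap.
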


\begin{say}
Since $\kos(\mathcal{M})|_{X \setminus D}=\kos(\mathscr{O}^*_{X \setminus D})$,
we obtain a morphism of the complexes of $\mathbb{Q}$-sheaves 
$\kos(\mathcal{M}) \to Rj_*\kos(\mathscr{O}^*_{X \setminus D})$ 
such that the diagram in the derived category 
\begin{equation}
\label{eq:2}
\xymatrix{ 
\kos(\mathcal{M})\ar[d]_-\psi\ar[r]
&Rj_*\kos(\mathscr{O}^*_{X \setminus D}) \ar[d]\\
\Omega_X(\log D)\ar[r]
& Rj_*\Omega_{X \setminus D}
}
\end{equation}
is commutative,
where the right vertical arrow
is induced from the morphism
$\kos(\mathscr{O}^*_{X \setminus D}) \to \Omega_{X \setminus D}$
defined by the same way as $\psi$.
\end{say}

\begin{lem}
\label{lem:5}
We have the natural isomorphism
\begin{equation}
\kos(\mathcal{M}) \xrightarrow{\simeq} Rj_*\mathbb{Q}_{X \setminus D}
\end{equation}
in the derived category.
By restricting it to $X_0$, we obtain
the isomorphism
\begin{equation}
\kos(\mathcal{M})|_{X_0} \xrightarrow{\simeq} R(j_0)_*\mathbb{Q}_U
\end{equation}
in the derived category.
\end{lem}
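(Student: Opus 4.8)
The plan is to write down the natural morphism $\kos(\mathcal{M}) \to Rj_*\mathbb{Q}_{X\setminus D}$ furnished by the diagram \eqref{eq:2}, and then to prove it is a quasi-isomorphism by base change to $\mathbb{C}$, where it becomes the composite of the filtered quasi-isomorphism of Lemma \ref{lem:1} with the logarithmic Poincar\'e lemma. First I would construct the arrow. Restricting Lemma \ref{lem:2} to $X\setminus D$ kills every graded piece $\Gr_m^W$ with $m\ge 1$, since $D^{(m)}\subset D$, and leaves $\Gr_0^W\simeq \mathbb{Q}_{X\setminus D}$; hence the inclusion of $W_0$ yields a canonical quasi-isomorphism $\kos(\mathscr{O}^*_{X\setminus D})\xrightarrow{\simeq}\mathbb{Q}_{X\setminus D}$. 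Applying $Rj_*$ and precomposing with the upper horizontal arrow $\kos(\mathcal{M})\to Rj_*\kos(\mathscr{O}^*_{X\setminus D})$ of \eqref{eq:2} produces the desired morphism $u\colon \kos(\mathcal{M})\to Rj_*\mathbb{Q}_{X\setminus D}$.

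Next I would check that $u$ is a quasi-isomorphism. Since $\mathbb{Q}\to\mathbb{C}$ is faithfully flat, a morphism of complexes of $\mathbb{Q}$-sheaves is a quasi-isomorphism as soon as it is one after applying $\mathbb{C}\otimes_{\mathbb{Q}}-$, so it suffices to treat $\mathbb{C}\otimes_{\mathbb{Q}}u$. By the commutativity of \eqref{eq:2} together with Lemma \ref{lem:1}, $\mathbb{C}\otimes_{\mathbb{Q}}u$ is identified with the composite
\[
\mathbb{C}\otimes_{\mathbb{Q}}\kos(\mathcal{M}) \xrightarrow{\ \psi\ } \Omega_X(\log D) \longrightarrow Rj_*\Omega_{X\setminus D} \xrightarrow{\ \simeq\ } Rj_*\mathbb{C}_{X\setminus D},
\]
where the first arrow is the filtered quasi-isomorphism of Lemma \ref{lem:1}, the last arrow comes from the holomorphic Poincar\'e lemma $\Omega_{X\setminus D}\simeq\mathbb{C}_{X\setminus D}$, and the middle arrow $\Omega_X(\log D)\to Rj_*\Omega_{X\setminus D}$ is a quasi-isomorphism by Deligne's logarithmic Poincar\'e lemma for the simple normal crossing divisor $D$. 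Thus $\mathbb{C}\otimes_{\mathbb{Q}}u$ is a quasi-isomorphism, and therefore so is $u$, which gives the first assertion. The hard part is precisely this compatibility packaged into \eqref{eq:2}: one must verify that the identification $\kos(\mathscr{O}^*_{X\setminus D})\simeq\mathbb{Q}_{X\setminus D}$ used to build $u$ matches, after $\otimes\mathbb{C}$ and under $\psi$, the Poincar\'e-lemma identification on the overlap. Granting this---which is exactly what the commutative square \eqref{eq:2} and Lemma \ref{lem:1} are set up to provide---the remaining steps are formal.

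Finally I would deduce the restriction statement. Because $\Supp f^*\Sigma_Y\subset\Supp\Sigma_X$ we have $U=X\setminus\Sigma_X\subset X_0$, while $E=(f^*\Sigma_Y)_{\red}$ is disjoint from $X_0=f^{-1}(Y_0)$; hence $D\cap X_0=\Sigma_X\cap X_0$ and $(X\setminus D)\cap X_0=X_0\setminus\Sigma_X=U$. Since $X_0\hookrightarrow X$ is an open immersion and the relevant square is Cartesian, restriction to $X_0$ commutes with $Rj_*$, giving $(Rj_*\mathbb{Q}_{X\setminus D})|_{X_0}\simeq R(j_0)_*\mathbb{Q}_U$. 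Restricting $u$ then yields the second isomorphism $\kos(\mathcal{M})|_{X_0}\xrightarrow{\simeq}R(j_0)_*\mathbb{Q}_U$, as desired.
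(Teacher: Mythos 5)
Your proposal is correct and takes essentially the same route as the paper: both reduce to $\mathbb{C}$-coefficients by flatness of $\mathbb{Q}\to\mathbb{C}$ and then use the commutative square \eqref{eq:2}, with Lemma \ref{lem:1} (the quasi-isomorphism $\mathbb{C}\otimes_{\mathbb{Q}}\kos(\mathcal{M})\to\Omega_X(\log D)$), Deligne's logarithmic comparison for the bottom arrow, and the Poincar\'e lemma on $X\setminus D$ for the right arrow, to conclude that the remaining arrow is an isomorphism in the derived category. The only cosmetic differences are that you derive the canonical quasi-isomorphism $\mathbb{Q}_{X\setminus D}\simeq\kos(\mathscr{O}^*_{X\setminus D})$ from Lemma \ref{lem:2} rather than citing it directly, and you make explicit the (immediate) compatibility of $Rj_*$ with restriction to $X_0$.
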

\begin{proof}
It is sufficient to prove that the morphism
\begin{equation}
\mathbb{C} \otimes_{\mathbb{Q}} \kos(\mathcal{M})
\to
\mathbb{C} \otimes_{\mathbb{Q}}Rj_*\kos(\mathscr{O}^*_{X \setminus D})
\end{equation}
is an isomorphism.
Since we have the canonical quasi-isomorphism
$\mathbb{Q}_{X \setminus D} \to \kos(\mathscr{O}^*_{X \setminus D})$,
the right vertical arrow in \eqref{eq:2}
induces an isomorphism
$\mathbb{C} \otimes Rj_*\kos(\mathscr{O}^*_{X \setminus D})
\simeq Rj_*\Omega_{X \setminus D}$
in the derived category.
Hence Lemma \ref{lem:1} implies
the conclusion
because the bottom arrow in \eqref{eq:2}
is known to be isomorphisms
in the derived category.
\end{proof}

Now, we prove Theorem \ref{x-thm4.1}.

\begin{proof}[Proof of Theorem \ref{x-thm4.1}]
We set $E \cap D^{(m)}=(a_m)^*E$,
which is a simple normal crossing divisor on $D^{(m)}$
for every $m \in \mathbb{Z}_{\ge 0}$.

First, we assume that the following conditions are satisfied:
\begin{enumerate}
\refstepcounter{equation}
\item
\label{item:11}
$\Sigma_Y$ is a smooth hypersurface in $Y$, and
\item
\label{item:13}
$\Omega^1_{D^{(m)}/Y}(\log E \cap D^{(m)})$
is locally free of finite rank for all $m \in \mathbb{Z}_{\ge 0}$.
\end{enumerate}
On the log de Rham complex $\Omega_X(\log \Sigma_X)$,
we have the filtration $W(D)$,
which induces a finite increasing filtration $W(D)$
on the relative log de Rham complex $\Omega_{X/Y}(\log \Sigma_X)$.
A morphism of complexes
$\overline{\psi} \colon \kos(\mathcal{M}) \to \Omega_{X/Y}(\log \Sigma_X)$
is obtained by composing the three morphisms,
$\psi \colon \kos(\mathcal{M}) \to \Omega_X(\log D)$,
the inclusion $\Omega_X(\log D) \to \Omega_X(\log \Sigma_X)$
and $\Omega_X(\log \Sigma_X) \to \Omega_{X/Y}(\log \Sigma_X)$.
We set
\begin{equation}
\begin{split}
K&=((K_{\mathbb{R}}, W), (K_{\mathscr{O}}, W, F), \alpha) \\
&=(\mathbb{R} \otimes (Rf_*\kos(\mathcal{M}), W)|_{Y_0},
(Rf_*\Omega_{X/Y}(\log \Sigma_X), W(D), F),
\id \otimes (Rf_*\overline{\psi})|_{Y_0}),
\end{split}
\end{equation}
which is a triple as in \cite[3.7]{fujino-fujisawa3} on $Y$.
Since
\begin{equation}
(\Gr_m^{W(D)}\Omega_{X/Y}(\log \Sigma_X), F)
\simeq
(a_m)_*\left(\Omega_{D^{(m)}/Y}(\log E \cap D^{(m)})[-m], F[-m]\right)
\end{equation}
as filtered complexes
by Lemmas \ref{lem:2} and \ref{lem:1},
we have 
\begin{equation}
\label{eq:11}
\begin{split}
(\mathbb{R} \otimes \Gr_m^W&\kos(\mathcal{M}),
(\Gr_m^{W(D)}\Omega_{X/Y}(\log \Sigma_X), F),
\id \otimes \Gr_m^W\overline{\psi}) \\
&\simeq
(a_m)_*\left(\mathbb{R}_{D^{(m)}},
(\Omega_{D^{(m)}/Y}(\log E \cap D^{(m)}), F[-m]),
(2\pi\sqrt{-1})^{-m}\overline{\iota}\right)[-m]
\end{split}
\end{equation} 
for every $m$,
where $\overline{\iota}$
denote the composite of the canonical morphisms
\begin{equation}
\mathbb{R}_{D^{(m)}}
\hookrightarrow
\mathbb{C}_{D^{(m)}}
\to \Omega_{D^{(m)}}
\hookrightarrow
\Omega_{D^{(m)}}(\log E \cap D^{(m)})
\to
\Omega_{D^{(m)}/Y}(\log E \cap D^{(m)}).
\end{equation}
Thus we can easily check that $K|_{Y_0}$ satisfies
the conditions (3.7.1)--(3.7.3) in \cite[3.7]{fujino-fujisawa3}
because
$f a_m \colon D^{(m)} \to Y$
is smooth over $Y_0$ for every $m$.
By Lemma 3.3 together with (3.7.6) of \cite{fujino-fujisawa3},
we obtain a polarizable variation of $\mathbb{R}$-Hodge structure
\begin{equation}
\left.\left(\mathbb{R} \otimes \Gr_m^WR^if_*\kos(\mathcal{M}),
(\Gr_m^{W(D)}R^if_*\Omega_{X/Y}(\log \Sigma_X),F),
\id \otimes \Gr_m^WR^if_*\overline{\psi}\right)\right|_{Y_0}
\end{equation}
of weight $m+i$ for every $i,m$.
Using the Koszul filtration \eqref{p-eq4.1} 
in the proof of Theorem \ref{p-thm4.2},
we can check the Griffiths transversality
for $(R^if_*\Omega_{X/Y}(\log \Sigma_X),F)|_{Y_0}$
by the same way as in \cite{katzoda}
(cf.~\cite[Lemma 4.5]{fujino-fujisawa1}).
Thus the triple
\begin{equation}
\left((\mathbb{R} \otimes R^if_*\kos(\mathcal{M}), W[i]),
(R^if_*\Omega_{X/Y}(\log \Sigma_X),W(D)[i],F),
\id \otimes R^if_*\overline{\psi}\right)|_{Y_0}
\end{equation}
is a graded polarizable variation
of $\mathbb{R}$-mixed Hodge structure on $Y_0$
by \cite[(3.7.5)]{fujino-fujisawa3},
and all the local monodromies of $R^if_*\kos(\mathcal{M})|_{Y_0}$
along $\Sigma_Y$ are quasi-unipotent
by \cite[(3.7.4)]{fujino-fujisawa3}.
Since
$R^if_*\kos(\mathcal{M})|_{Y_0} \simeq R^i(f_0|_U)_*\mathbb{Q}_U$
by Lemma \ref{lem:5},
the local system
$R^i(f_0|_U)_*\mathbb{R}_U$ is of quasi-unipotent local monodromy
along $\Sigma_Y$
and underlies
a graded polarizable variation
of $\mathbb{R}$-mixed Hodge structure on $Y_0$.

Moreover, $K$ satisfies all the assumptions
in Theorem 3.9 of \cite{fujino-fujisawa3}
by \eqref{eq:11}
and by \cite[Lemma 4.3]{fujino-fujisawa3}.
Therefore, there exist isomorphisms
\begin{gather}
R^if_*\Omega_{X/Y}(\log \Sigma_X)
\simeq
{}^\ell R^if_*\Omega_{X/Y}(\log \Sigma_X)|_{Y_0} \\
W(D)_mR^if_*\Omega_{X/Y}(\log \Sigma_X)
\simeq
{}^\ell W(D)_mR^if_*\Omega_{X/Y}(\log \Sigma_X)|_{Y_0}
\end{gather}
whose restriction to $Y_0$ coincide with the identities
and the natural isomorphisms
\begin{gather}
F^pR^if_*\Omega_{X/Y}(\log \Sigma_X)
\simeq
R^if_*F^p\Omega_{X/Y}(\log \Sigma_X) \\
\Gr_F^pR^if_*\Omega_{X/Y}(\log \Sigma_X)
\simeq
R^{i-p}f_*\Omega_{X/Y}^p(\log \Sigma_X)
\end{gather}
by (3.9.1) and by (3.9.3) of \cite[Theorem 3.9]{fujino-fujisawa3},
and $\Gr_F^p\Gr_m^{W(D)}R^if_*\Omega_{X/Y}(\log \Sigma_X)$
is locally free of finite rank
for every $i,m,p \in \mathbb{Z}$
by (3.9.4) of \cite[Theorem 3.9]{fujino-fujisawa3}.
Thus the filtration $F$ on $R^if_*\Omega_{X/Y}(\log \Sigma_X)$
satisfies \ref{item:7} and \ref{item:9}.

For the general case,
by Lemma 4.6 of \cite{fujino-fujisawa3},
there exists a closed subspace $\Sigma'_Y \subset \Sigma_Y$
with $\codim_Y \Sigma'_Y \ge 2$
such that
$f \colon X \to Y$
restricted over $Y \setminus \Sigma'_Y$
satisfies the conditions
\ref{item:11} and \ref{item:13}.
Therefore the filtration $F$ on
${}^\ell \mathscr{V}|_{Y \setminus \Sigma'_Y}$
is obtained by the argument above.
Moreover, the filtration $F$ on
$\Gr_m^{W(D)} \mathscr{V}$
extends to
$\Gr_m^{W(D)}({}^\ell \mathscr{V})$
by Schmid's nilpotent orbit theorem.
Applying Lemma 1.11.2 of \cite{kashiwara},
we obtain an extension of $F$ on
${}^\ell \mathscr{V}$
satisfying \ref{item:7} and \ref{item:9}.

In order to prove the admissibility,
we may assume $(Y, \Sigma_Y)=(\Delta, \{0\})$
by the definition of admissibility (cf.~\cite[1,9]{kashiwara}).
Pulling back the variation
by the morphism
\begin{equation}
\label{eq:3}
\Delta \ni t \mapsto t^m \in \Delta
\end{equation}
changes the logarithm of the unipotent part of the monodromy automorphism
to its multiple by $m$. 
Therefore the existence of the relative monodromy weight filtration
can be checked
after the pull-back by the morphism \eqref{eq:3}.
Moreover, Lemma 1.9.1 of \cite{kashiwara}
enables us to check the extendability of the Hodge filtration
after the pull-back by the morphism \eqref{eq:3}. 
Thus we may
assume that
$f \colon X \to \Delta$
satisfies the following three conditions:
\begin{itemize}
\item 
$X$ is a K\"ahler manifold, 
\item
$f^{-1}(0)_{\red}$
is a simple normal crossing divisor on $X$, and 
\item
the local system 
$R^i(f_0|_U)_*\mathbb R_U$,
which underlies the variation of mixed Hodge structure in question,
is of unipotent monodromy automorphism around the origin.
\end{itemize}
Then we obtain the conclusion by 
\cite[Th\'eor\`eme \RomI.1.10 and Proposition \RomI.3.10]{elzein}
(cf.~\cite[\S 5]{steenbrink-zucker}, 
\cite[Theorem 14.51]{Peters-SteenbrinkMHS},
\cite[Theorem 8.2.13]{BrosnanElZeinHT}, and so on).
\end{proof}

\begin{rem}
As in \cite[Section 7]{fujino-fujisawa3}, 
we can use Koszul complexes when we construct a cohomological 
$\mathbb Q$-mixed Hodge complex for the proof of 
the above admissibility. 
\end{rem}

\section{Semipositivity theorems}\label{sec-fujisawa}

In this section, we give a variant of semipositivity theorems
of Fujita--Zucker--Kawamata
(cf.~e.g.~\cite{brunebarbe1}, \cite{brunebarbe}, \cite{fujino-fujisawa1},
\cite{fujino-fujisawa2}, \cite{fujino-fujisawa-saito},
\cite{kawamata1}, \cite{Zucker}, \cite{zuo}),
which is necessary
for the proof of Theorem \ref{thm-fujisawa}.
The arguments here are essentially the same as,
but simpler than that in \cite{fujisawa2}.

\begin{say}
Apparently, Theorem \ref{thm-fujisawa} is a direct consequence
of \cite[Theorem 4.5]{brunebarbe1}.
However, Theorem 4.5 of \cite{brunebarbe1} is not correct
because there exists a counter example
(see \cite[Example 4.6]{fujisawa2}).
In this section, we formulate another semipositivity theorem,
which is slightly different from \cite[Theorem 4.5]{brunebarbe1},
and give the proof of it
by adapting the idea in \cite{brunebarbe1}
to our formulation.
For more detail, see \cite{fujisawa2}.
\end{say}

\begin{say}
Let $X$ be a smooth complex variety.
A filtered vector bundle is a pair
$(\mathscr{V}, F)$ consisting of
a locally free $\mathscr{O}_X$-module $\mathscr{V}$ of finite rank
and a finite decreasing filtration $F$ on $\mathscr{V}$,
such that $\Gr_F^p\mathscr{V}=F^p\mathscr{V}/F^{p+1}\mathscr{V}$
is locally free
for every $p \in \mathbb{Z}$.
The notion of a morphism of filtered vector bundles
is defined in the trivial way.
For a filtered vector bundle $(\mathscr{V}, F)$,
we set $\Gr_F^{\bullet}\mathscr{V}=\bigoplus_p \Gr_F^p\mathscr{V}$.
Similarly, a bifiltered vector bundle is a triple
$(\mathscr{V}, W, F)$ consisting of
a locally free $\mathscr{O}_X$-module $\mathscr{V}$ of finite rank,
a finite increasing filtration $W$,
and a finite decreasing filtration $F$
such that
$\Gr_F^p\Gr_m^W\mathscr{V} \simeq \Gr_m^W\Gr_F^p\mathscr{V}$
is locally free
for every $m,p \in \mathbb{Z}$.

It is said that
a filtered vector bundle $(\mathscr{V}, F)$ on $X$
underlies a (polarizable) variation
of $\mathbb{R}$-Hodge structure of weight $w$,
if there exist
a polarizable variation of $\mathbb{R}$-Hodge structure
$(\mathbb{V}, F)$ of weight $w$
and an isomorphism
$(\mathscr{V}, F) \simeq (\mathscr{O}_X \otimes_{\mathbb{R}} \mathbb{V}, F)$
as filtered vector bundles.
\end{say}

\begin{say}
Let $X$ be a smooth complex variety
and $D$ a simple normal crossing divisor on $X$.
We set $X_0=X \setminus D$.
A variation of $\mathbb{R}$-Hodge structure
$(\mathbb{V}, F)$ of weight $w$ on $X_0$
is said to be unipotent,
if the monodromy automorphism
around each irreducible component of $D$
is unipotent.
Let $(\mathscr{V}, F)$ be a filtered vector bundle on $X$
such that $(\mathscr{V}, F)|_{X_0}$
underlies a unipotent variation of $\mathbb{R}$-Hodge structure
$(\mathbb{V}, F)$ of weight $w$ on $X_0$.
We call $(\mathscr{V}, F)$ the canonical extension of $(\mathbb{V}, F)$
if there exists an integrable log connection
\begin{equation}
\nabla \colon \mathscr{V}
\to \Omega_X^1(\log D) \otimes_{\mathscr{O}_X} \mathscr{V}
\end{equation}
such that the residue morphism of $\nabla$
along each irreducible component of $D$ is nilpotent
and that $\nabla|_{X_0} \simeq d \otimes \id$
under the isomorphisms
$\mathscr{V}|_{X_0}
\simeq \mathscr{O}_X \otimes_{\mathbb{R}} \mathbb{V}$
and
$(\Omega_X^1(\log D) \otimes_{\mathscr{O}_{X_0}} \mathscr{V})|_{X_0}
\simeq \Omega_{X_0}^1 \otimes_{\mathbb{R}} \mathbb{V}$.
We note that the integrable log connection $\nabla$ above
satisfies the condition
\begin{equation}
\nabla (F^p\mathscr{V}) \subset
\Omega_X^1(\log D) \otimes_{\mathscr{O}_X} F^{p-1}\mathscr{V}
\end{equation}
for every $p \in \mathbb{Z}$
because of the Griffiths transversality for $(\mathbb{V}, F)$ on $X_0$.
Then a morphism
\begin{equation}
\theta^p \colon \Gr_F^p\mathscr{V}
\to \Omega_X^1(\log D) \otimes_{\mathscr{O}_X} \Gr_F^{p-1}\mathscr{V}
\end{equation}
is induced for every $p$.
Thus the morphism defined by
\begin{equation}
\theta=\bigoplus_p \theta^p \colon
\Gr_F^{\bullet}\mathscr{V}
\to \Omega_X^1(\log D) \otimes_{\mathscr{O}_X} \Gr_F^{\bullet}\mathscr{V}
\end{equation}
is called the Higgs field of $(\mathscr{V}, F)$
as in Section \ref{q-sec2}.

In the situation above,
the filtered vector bundle $(\mathscr{V}, F)$
is called the canonical extension
of its restriction $(\mathscr{V}, F)|_{X_0}$
when the variation of $\mathbb{R}$-Hodge structure
$(\mathbb{V}, F)$ is not explicitly specified.
\end{say}

\begin{say}\label{7474}
From now, we study a bifiltered vector bundle
$(\mathscr{V},W,F)$ on $X$
satisfying the following two conditions:
\begin{enumerate}
\item
\label{item:pvhs}
$(\Gr_m^W\mathscr{V}, F)|_{X_0}$ underlies
a unipotent polarizable variation of $\mathbb{R}$-Hodge structure
of a certain weight on $X_0$
for every $m \in \mathbb{Z}$.
\item
\label{item:extension}
$(\Gr_m^W\mathscr{V}, F)$ is the canonical extension
of $(\Gr_m^W\mathscr{V}, F)|_{X_0}$ for every $m \in \mathbb{Z}$.
\end{enumerate}
For a bifiltered vector bundle $(\mathscr{V},W,F)$
satisfying \ref{item:pvhs} and \ref{item:extension},
the Higgs field of $(\Gr_m^W\mathscr{V},F)$
is denoted by $\theta_m$.
The composite
\begin{equation}
\label{eq:9}
\Gr_F^{\bullet}\Gr_m^W\mathscr{V}
\xrightarrow{\theta_m}
\Omega^1_X(\log D) \otimes_{\mathscr{O}_X}
\Gr_F^{\bullet}\Gr_m^W\mathscr{V}
\rightarrow
\Omega^1_X(\log D) \otimes_{\mathscr{O}_X}
\Gr_m^W\Gr_F^{\bullet}\mathscr{V}
\end{equation}
is denoted by $\tilde{\theta}_m$ for every $m$,
where the second arrow
is induced from the canonical isomorphism
$\Gr_F^p\Gr_m^W\mathscr{V}
\xrightarrow{\simeq} \Gr_m^W\Gr_F^p\mathscr{V}$
for all $p$.
We often omit the subscript $m$
for $\theta_m$ and for $\tilde{\theta}_m$
if there is no danger of confusion.
\end{say}

\begin{rem}\label{7575}
If a bifiltered vector bundle $(\mathscr{V},W,F)$
is the canonical extension
of a graded polarizable admissible variation of
$\mathbb{R}$-mixed Hodge structure,
then it satisfies the conditions
\ref{item:pvhs} and \ref{item:extension}.
\end{rem}

\begin{thm}
\label{thm:1}
Let $X$ be a smooth projective variety,
$D$ a simple normal crossing divisor on $X$,
and $(\mathscr{V},W,F)$ a bifiltered vector bundle on $X$
satisfying \ref{item:pvhs} and \ref{item:extension}.
Let $\mathscr{F}$ be a locally free $\mathscr{O}_X$-module
equipped with a surjection $\Gr_F^{\bullet}\mathscr{V} \to \mathscr{F}$.
The filtration $W$ on $\mathscr{V}$ induces
a filtration $W$ on $\Gr_F^{\bullet}\mathscr{V}$,
and then the filtration $W$ on $\mathscr{F}$
is induced via the surjection $\Gr_F^{\bullet}\mathscr{V} \to \mathscr{F}$.
Then $\mathscr{F}$ is semipositive
if the composite
\begin{equation}
\label{eq:6}
\Gr_F^{\bullet}\Gr_m^W\mathscr{V}
\xrightarrow{\tilde{\theta}_m}
\Omega_X^1(\log D) \otimes_{\mathscr{O}_X}
\Gr_m^W\Gr_F^{\bullet}\mathscr{V} \\
\to
\Omega_X^1(\log D) \otimes_{\mathscr{O}_X}
\Gr_m^W\mathscr{F}
\end{equation}
is the zero morphism for every $m \in \mathbb{Z}$,
where the second morphism
is induced from the surjection
$\Gr_F^{\bullet}\mathscr{V} \to \mathscr{F}$.
\end{thm}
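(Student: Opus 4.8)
The plan is to use the weight filtration $W$ to reduce Theorem \ref{thm:1} to a pure statement, and then to prove the pure statement by a curvature computation for the Hodge metric combined with its asymptotic control along $D$.

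First I would unwind what the hypothesis gives on each $W$-graded piece. Since $W$ on $\mathscr{F}$ is by definition the image of $W$ on $\Gr_F^\bullet\mathscr{V}$, the surjection $\Gr_F^\bullet\mathscr{V}\to\mathscr{F}$ is strict for $W$ (a point one checks from the construction), so that for every $m$ it induces a surjection of locally free sheaves $\Gr_m^W\Gr_F^\bullet\mathscr{V}\to\Gr_m^W\mathscr{F}$. Using the canonical isomorphism $\Gr_F^p\Gr_m^W\mathscr{V}\xrightarrow{\simeq}\Gr_m^W\Gr_F^p\mathscr{V}$, the source is identified with the system of Hodge bundles $\Gr_F^\bullet\Gr_m^W\mathscr{V}$, which by \ref{item:pvhs} and \ref{item:extension} is the canonical extension of the system attached to a unipotent polarizable variation of $\mathbb R$-Hodge structure of a single weight on $X_0$. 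The assumption that the composite \eqref{eq:6} vanishes says exactly that the Higgs field $\theta_m$ of $\Gr_m^W\mathscr{V}$, followed by the projection onto $\Gr_m^W\mathscr{F}$, is zero. Thus $\Gr_m^W\mathscr{F}$ is a locally free Higgs quotient of a pure system of Hodge bundles on which the induced Higgs field is identically zero.

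Next I would reduce to these graded pieces. The finite filtration $W$ on the vector bundle $\mathscr{F}$ gives short exact sequences $0\to W_{m-1}\mathscr{F}\to W_m\mathscr{F}\to\Gr_m^W\mathscr{F}\to 0$ of locally free sheaves, and nefness is stable under extensions: if the two outer terms of such a sequence are nef, so is the middle one. Inducting upward from the lowest nonzero step, nefness of every $\Gr_m^W\mathscr{F}$ forces nefness of $\mathscr{F}=W_{\gg 0}\mathscr{F}$. I emphasise that the pathology of Remark \ref{q-rem2.7} does not interfere here, because that example concerns weak positivity (equivalently, pseudo-effectivity for line bundles), whereas the present step uses only the stronger and extension-closed notion of nefness.

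It remains to prove the pure case, which is the analytic core and the main obstacle. Writing $(E,\theta)$ for the canonical extension of the system of Hodge bundles of a unipotent polarizable $\mathbb R$-Hodge structure on $X_0$, equipped with its Hodge metric, and $E\to\mathscr{G}$ for a locally free quotient through which $\theta$ becomes zero, I must show that $\mathscr{G}$ is nef; I would follow the method of \cite{brunebarbe1} in the corrected form of \cite{fujisawa2}. The three ingredients are: first, the Hitchin--Simpson relation expresses the Chern curvature of $(E,\theta)$ through $\theta$ and its metric adjoint, and endowing $\mathscr{G}$ with the quotient Hodge metric---while using that $\theta$ lands in $\Omega^1_X(\log D)\otimes\ker(E\to\mathscr{G})$---the second-fundamental-form term carries the correct sign, so the curvature of $\mathscr{G}$ is Griffiths semipositive on $X_0$; second, unipotency of the monodromy together with the use of the canonical extension makes the Hodge metric a good (acceptable) singular Hermitian metric along $D$, by the norm estimates of Schmid and Cattani--Kaplan--Schmid; third, a good metric whose curvature is semipositive on $X_0$ has nef first Chern class, so $\mathcal{O}_{\mathbb P(\mathscr{G})}(1)$ is nef and $\mathscr{G}$ is semipositive. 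The delicate point is the passage across $D$: one must check that semipositivity of the smooth curvature on $X_0$ upgrades to a genuine numerical positivity on all of the projective $X$, and it is precisely here that the unipotency hypothesis and the classical degeneration theory (nilpotent orbit theorem and norm estimates) are indispensable, and where the present formulation departs from the erroneous \cite[Theorem 4.5]{brunebarbe1}. I would carry out this boundary estimate in detail following \cite{fujisawa2}.
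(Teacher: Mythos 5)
Your overall strategy (reduce to the $W$-graded pieces, then handle the pure case) is natural, but the reduction step contains a genuine gap that the paper's proof is specifically structured to avoid. The filtration $W_m\mathscr{F}$ is \emph{defined} as the image of $W_m\Gr_F^{\bullet}\mathscr{V}$ under the surjection $\Gr_F^{\bullet}\mathscr{V}\to\mathscr{F}$, and the image of a map of vector bundles need not be a subbundle: in general $W_m\mathscr{F}$ is only a coherent subsheaf of $\mathscr{F}$, and $\Gr_m^W\mathscr{F}$ can fail to be locally free and can even have torsion (already for $\mathscr{O}_{\mathbb P^1}(-1)\to\mathscr{O}_{\mathbb P^1}^{\oplus 2}$ given by two sections with a common zero, the quotient by the image has torsion). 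Nothing in the hypotheses of Theorem \ref{thm:1} rules this out. Consequently your two key tools do not apply as stated: the pure semipositivity theorem is a statement about locally free quotients killed by the Higgs field, and extension-stability of nefness is a statement about locally free sheaves. Saturating the $W_m\mathscr{F}$ does not repair this, because then the map from $\Gr_F^{\bullet}\Gr_m^W\mathscr{V}$ onto the saturated graded piece is only \emph{generically} surjective, and nefness --- unlike weak positivity --- is not preserved under generically isomorphic modifications. This is exactly why the paper runs the argument in the opposite order: it first tests nefness on a morphism $f\colon C\to X$ from a curve against a quotient line bundle $\mathscr{L}$, takes the smallest $m$ with $W_m\mathscr{F}\to\mathscr{L}$ nonzero, and uses that the image $\mathscr{M}$ (a quotient of $\Gr_m^W\mathscr{F}$) is automatically an \emph{invertible} subsheaf of $\mathscr{L}$ because $\dim C=1$; only then does it pass to the pure object $(\Gr_m^W\mathscr{V},F)$ and invoke the curve-case curvature argument of Brunebarbe.

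The second problem is your treatment of curves contained in $D$, which you defer to ``norm estimates'' in the pure case. For a curve $f\colon C\to X$ with $f(C)\subset D$ the Hodge metric is not defined anywhere on $f(C)$ (the variation lives only on $X_0$), so Griffiths semipositivity of the curvature on $X_0$ plus growth control near $D$ gives no direct handle on $\deg\mathscr{L}$ for quotient line bundles of $f^*\mathscr{F}$. This case is the real content of the paper's Section \ref{sec-fujisawa}: one restricts the whole bifiltered structure to the boundary component $Y$ containing $f(C)$, builds a new weight filtration $M$ out of the monodromy weight filtration $W(Y)$ of the residue $\res_Y(\nabla_m)$ (as in \ref{para:1}, using Lemma \ref{lem:3} and Corollary \ref{cor:1} to control the interchange of $\Gr_F$, $\Gr^W$, $\Gr^M$), verifies that $(\mathscr{V}_Y,M,F)$ again satisfies \ref{item:pvhs} and \ref{item:extension} on $(Y,E|_Y)$ and that the vanishing hypothesis \eqref{eq:6} descends, and then inducts on $\dim X$. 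Note that this is also why the theorem must be formulated for bifiltered (mixed) objects at all: even if one starts from a pure variation, its restriction to $Y$ is genuinely mixed, so a ``pure-only'' induction cannot close. Your citation of \cite{fujisawa2} at this point is in effect circular, since what that paper (and the present one) actually does there is precisely this boundary-restriction induction rather than a metric estimate; and the cautionary tale that a purely metric statement in the mixed setting is false is \cite[Example 4.6]{fujisawa2}, the counterexample to \cite[Theorem 4.5]{brunebarbe1} recalled in this section.
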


The rest of this section is devoted to the proof of this theorem.

\begin{say}
\label{para:1}
Let us assume $\dim X \ge 2$.
We take an irreducible component of $D$ denoted by $Y$,
and set $E=D-Y$.
Then $Y$ is a smooth projective variety
and $E|_Y$ is a simple normal crossing divisor on $Y$.
We set $Y_0=Y \setminus E|_Y$.

We consider the restriction
$\mathscr{V}_Y=\mathscr{O}_Y \otimes_{\mathscr{O}_X} \mathscr{V}$
with the induced filtrations $W$ and $F$.
The residue $\res_Y(\nabla_m)$
of the associated integrable log connection $\nabla_m$
on $\Gr_m^W\mathscr{V}$
is a nilpotent endomorphism of
$\Gr_m^W\mathscr{V}_Y
\simeq \mathscr{O}_Y \otimes_{\mathscr{O}_X} \Gr_m^W\mathscr{V}$
by the assumption \ref{item:extension}.
Then the monodromy weight filtration for $\res_Y(\nabla_m)$
on $\Gr_m^W\mathscr{V}_Y$
is denoted by $W(Y)$ for every $m$.
Via the surjection 
$\pi_m \colon W_m\mathscr{V}_Y \to \Gr_m^W\mathscr{V}_Y$,
we obtain a sequence of coherent $\mathscr{O}_Y$-modules
\begin{equation}
W_{m-1}\mathscr{V}_Y \subset \cdots \subset
\pi_m^{-1}(W(Y)_{l-1}\Gr_m^W\mathscr{V}_Y)
\subset
\pi_m^{-1}(W(Y)_l\Gr_m^W\mathscr{V}_Y)
\subset \cdots \subset
W_m\mathscr{V}_Y
\end{equation}
of finite length for all $m$.
By renumbering the $\mathscr{O}_Y$-submodules
$\pi_m^{-1}(W(Y)_l\Gr_m^W\mathscr{V}_Y)$ of $\mathscr{V}_Y$,
we obtain a finite increasing filtration $M$ on $\mathscr{V}_Y$
satisfying the following two conditions:
\begin{enumerate}
\item
\label{item:1}
There exists a strictly increasing map
$\chi \colon \mathbb{Z} \to \mathbb{Z}$
such that $M_{\chi(m)}\mathscr{V}_Y=W_m\mathscr{V}_Y$
for every $m$.
\item
\label{item:2}
For every $m$,
the filtration on $\Gr_m^W\mathscr{V}_Y$
induced from $M$
coincides with $W(Y)$ up to a shift.
\end{enumerate}
Below, we will see that $(\mathscr{V}_Y,M,F)$
is a bifiltered vector bundle on $Y$
satisfying the conditions
\ref{item:pvhs} and \ref{item:extension}.
\end{say}

\begin{say}
\label{para:3}
First, we consider the pure case,
that is, the case where $W_m\mathscr{V}=V, W_{m-1}\mathscr{V}=0$
for some $m$.
Then $(\mathscr{V}, F)$ is a filtered vector bundle on $X$
such that $(\mathscr{V},F)|_{X_0}$
underlies a unipotent polarizable variation of
$\mathbb{R}$-Hodge structure $(\mathbb{V}, F)$
of weight $w$ on $X_0$,
and that $(\mathscr{V},F)$
is the canonical extension of $(\mathbb{V},F)$.
This case was already studied in
Section 5 of \cite{fujino-fujisawa1}.
Here we briefly recall some results,
which will be needed later.

The associated integrable log connection on $\mathscr{V}$
is denoted by $\nabla$.
As mentioned in \ref{para:1} above,
the nilpotent endomorphism $\res_Y(\nabla)$
gives us the monodromy weight filtration $W(Y)$ on $\mathscr{V}_Y$.
In this case, we set $M=W(Y)$ on $\mathscr{V}_Y$.

The canonical morphism
$\Omega^1_X(\log E) \to \Omega^1_X(\log D)$
induces a morphism of $\mathscr{O}_Y$-modules
$\mathscr{O}_Y \otimes_{\mathscr{O}_X} \Omega^1_X(\log E)
\to \mathscr{O}_Y \otimes_{\mathscr{O}_X}\Omega^1_X(\log D)$,
which factors through the surjection
$\mathscr{O}_Y \otimes_{\mathscr{O}_X} \Omega^1_X(\log E)
\to \Omega^1_Y(\log E|_Y)$.
Thus we obtain a morphism of $\mathscr{O}_Y$-modules
$\Omega^1_Y(\log E|_Y)
\to \mathscr{O}_Y \otimes_{\mathscr{O}_X}\Omega^1_X(\log D)$,
which fits in the short exact sequence
\begin{equation}
\label{eq:12}
\xymatrix{
0 \ar[r]
&\Omega^1_Y(\log E|_Y) \ar[r]
&\mathscr{O}_Y \otimes_{\mathscr{O}_X}\Omega^1_X(\log D) \ar[r]
&\mathscr{O}_Y \ar[r]
&0
}
\end{equation}
as in 5.14 of \cite{fujino-fujisawa1}.
By restricting $\nabla$ to $Y$,
we obtain a $\mathbb{C}$-morphism
$\mathscr{V}_Y \to \Omega^1_X(\log D) \otimes_{\mathscr{O}_X}\mathscr{V}_Y$
denote by $\nabla|_Y$.
Then we have a commutative diagram
\begin{equation}
\label{eq:10}
\vcenter{
\xymatrix{
& & \mathscr{V}_Y \ar@{=}[r] \ar[d]_-{\nabla|_Y}
& \mathscr{V}_Y \ar[d]^-{\res_Y(\nabla)} \\
0 \ar[r]
&\Omega^1_Y(\log E|_Y) \otimes_{\mathscr{O}_Y} \mathscr{V}_Y \ar[r]
&\Omega^1_X(\log D) \otimes_{\mathscr{O}_X} \mathscr{V}_Y \ar[r]
&\mathscr{V}_Y \ar[r]
&0
}}
\end{equation}
by the definition of $\res_Y(\nabla)$,
where the bottom row,
which is induced from \eqref{eq:12},
is exact.
From the local description in \ref{para:2} below,
$\nabla|_Y$ preserves the filtration $W(Y)$,
that is,
\begin{equation}
(\nabla|_Y)(W(Y)_l\mathscr{V}_Y)
\subset
\Omega^1_X(\log D) \otimes_{\mathscr{O}_X} W(Y)_l\mathscr{V}_Y
\end{equation}
for every $l$.
Therefore the morphism
\begin{equation}
\Gr_l^{W(Y)}\nabla|_Y \colon \Gr_l^{W(Y)}\mathscr{V}_Y
\to
\Omega^1_X(\log D) \otimes_{\mathscr{O}_X} \Gr_l^{W(Y)}\mathscr{V}_Y
\end{equation}
is induced.
Because the composite of
$\Gr_l^{W(Y)}\nabla|_Y$
and the morphism
\begin{equation}
\Omega^1_X(\log D) \otimes_{\mathscr{O}_X} \Gr_l^{W(Y)}\mathscr{V}_Y
\to \Gr_l^{W(Y)}\mathscr{V}_Y
\end{equation}
induced by the morphism in the bottom row of \eqref{eq:10}
is the zero morphism
by the commutativity of \eqref{eq:10}
and by the inclusion
$\res_Y(\nabla)(W(Y)_l\mathscr{V}_Y) \subset W(Y)_{l-2}\mathscr{V}$,
a morphism
\begin{equation}
\nabla^{(l)}_Y
\colon
\Gr_l^{W(Y)}\mathscr{V}_Y
\to \Omega^1_Y(\log E|_Y) \otimes_{\mathscr{O}_Y}
\Gr_l^{W(Y)}\mathscr{V}_Y
\end{equation}
is induced from $\Gr_l^{W(Y)}\nabla|_Y$.
In other words, we have the commutative diagram
\begin{equation}
\xymatrix{
\Gr_l^{W(Y)}\mathscr{V}_Y \ar@{=}[r] \ar[d]_-{\nabla^{(l)}_Y}
& \Gr_l^{W(Y)}\mathscr{V}_Y \ar[d]^-{\Gr_l^{W(Y)}\nabla|_Y} \\
\Omega^1_Y(\log E|_Y) \otimes_{\mathscr{O}_Y} \Gr_l^{W(Y)}\mathscr{V}_Y \ar[r]
&\Omega^1_X(\log D) \otimes_{\mathscr{O}_X} \Gr_l^{W(Y)}\mathscr{V}_Y
}
\end{equation}
for every $l$.
Since $\nabla^{(l)}_Y$ is an integrable log connection
on $\Gr_l^{W(Y)}\mathscr{V}_Y$
by the local description \ref{para:2} below,
we obtain a $\mathbb{C}$-local system
$\ker(\nabla^{(l)}_Y)|_{Y_0}$
such that
\begin{equation}
\mathscr{O}_{Y_0} \otimes \ker(\nabla^{(l)}_Y)|_{Y_0}
\simeq \Gr_l^{W(Y)}\mathscr{V}_Y|_{Y_0}
\end{equation}
for every $l$.
\end{say}

\begin{say}
\label{para:2}
In this paragraph,
we give a local description of the objects appeared in \ref{para:3}.
Let us assume that $X=\Delta^{n+1}$ with the coordinate
$(y,z_1, \dots, z_n)$,
on which the divisors $Y$ and $E$
are defined by $y$ and $z_1 \cdots z_k$ respectively
for some $k$ with $1 \le k \le n$.
By the assumption that $(\mathscr{V},F)|_{X_0}$
underlies a unipotent polarizable variation of $\mathbb{R}$-Hodge structure
$(\mathbb{V},F)$ of weight $w$ on $X_0$,
there exist a finite dimensional $\mathbb{R}$-vector space $V$,
the mutually commuting nilpotent endomorphisms $N, N_1, \dots, N_k$ of $V$,
and an isomorphism
\begin{equation}
\varpi
\colon
\mathscr{O}_X \otimes_{\mathbb{R}} V
\xrightarrow{\simeq}
\mathscr{V}
\end{equation}
such that the pull-back $\varpi^*\nabla$
on $\mathscr{O}_X \otimes_{\mathbb{R}} V$
is given by
\begin{equation}
\label{eq:13}
(\varpi^*\nabla)(f \otimes v)
=
df \otimes v+(2\pi\sqrt{-1})^{-1}f
\Bigl(
\frac{dy}{y} \otimes N(v)+\sum_{i=1}^{k}\frac{dz_i}{z_i} \otimes N_i(v)
\Bigr)
\end{equation}
for $f \in \mathscr{O}_X$ and $v \in V$.
Moreover, once we define a (multi-valued) morphism
of $\mathscr{O}_{X_0}$-modules
$\rho \colon \mathscr{O}_{X_0} \otimes_{\mathbb{R}} V
\to \mathscr{O}_{X_0} \otimes_{\mathbb{R}} V$
by
\begin{equation}
\label{eq:14}
\rho=\exp
\Bigl(
-(2\pi\sqrt{-1})^{-1}(
\log y(\id \otimes N)+\sum_{i=1}^{k}\log z_i(\id \otimes N_i))
\Bigr),
\end{equation}
then we have
$(\varpi|_{X_0})^{-1}\mathbb{V}=\rho(V)$.
We denote the isomorphism
$\mathscr{O}_Y \otimes_{\mathbb{R}} V \xrightarrow{\simeq} \mathscr{V}_Y$
induced from $\varpi$
by $\varpi_Y$.
Since $\res_Y(\nabla)$ is identified with
$(2\pi\sqrt{-1})^{-1}\id \otimes N$
under the isomorphism $\varpi_Y$,
we have
\begin{equation}
\varpi_Y^{-1}(W(Y)_l\mathscr{V}_Y)
=\mathscr{O}_Y \otimes_{\mathbb{R}} W(N)_lV,
\end{equation}
for every $l$,
where $W(N)$ denotes the monodromy weight filtration for $N$ on $V$.
Therefore $\varpi_Y$ induces an isomorphism
\begin{equation}
\varpi^{(l)}_Y
\colon
\mathscr{O}_Y \otimes_{\mathbb{R}} \Gr_l^{W(N)}V
\xrightarrow{\simeq}
\Gr_l^{W(Y)}\mathscr{V}_Y
\end{equation}
for every $l$.
Because $N_i$ commutes with $N$ for all $i=1, \dots, k$,
the filtration $W(N)$ is preserved by $N_i$ for all $i=1, \dots, k$.
Therefore $\varpi^*\nabla$
preserves the filtration $W(N)$ by \eqref{eq:13}.
Thus $\nabla|_Y$ preserves the filtration $W(Y)$ on $\mathscr{V}_Y$.
For the morphism $\nabla^{(l)}_Y$ defined in \ref{para:3},
we have
\begin{equation}
\label{eq:16}
((\varpi^{(l)}_Y)^*\nabla^{(l)}_Y)(f \otimes v)
=
df \otimes v+(2\pi\sqrt{-1})^{-1}f
\sum_{i=1}^{k}\frac{dz_i}{z_i} \otimes N_i(v)
\end{equation}
for $f \in \mathscr{O}_Y$ and $v \in \Gr_l^{W(N)}V$.
On the other hand,
the morphism $\rho$ in \eqref{eq:14}
induces a morphism
$\rho_Y \colon
\mathscr{O}_{Y_0} \otimes_{\mathbb{C}} V
\to \mathscr{O}_{Y_0} \otimes_{\mathbb{C}} V$,
which preserves the filtration $W(N)$ on $V$.
Because of the inclusion $N(W(N)_lV) \subset W(N)_{l-2}V$,
we have
\begin{equation}
\Gr_l^{W(N)}\rho_Y
=
\exp
\Bigl(-(
2\pi\sqrt{-1})^{-1}
\sum_{i=1}^{k}\log z_i(\id \otimes N_i)
\Bigr)
\end{equation}
for every $l$.
Therefore the equality 
\begin{equation}
\ker(\nabla^{(l)}_Y)|_{Y_0}
=
(\varpi^{(l)}_Y|_{Y_0})((\Gr_l^{W(N)}\rho_Y)(\mathbb{C} \otimes_{\mathbb{R}}V))
\end{equation}
holds for every $l$ by \eqref{eq:16}.
Then 
\begin{equation}
\label{eq:17}
(\varpi^{(l)}_Y|_{Y_0})((\Gr_l^{W(N)}\rho_Y)(V))
\subset (\varpi^{(l)}_Y|_{Y_0})((\Gr_l^{W(N)}\rho_Y)
(\mathbb{C} \otimes_{\mathbb{R}}V))
=\ker(\nabla^{(l)}_Y)|_{Y_0}
\end{equation}
gives us an $\mathbb{R}$-local subsystem
of $\ker(\nabla^{(l)}_Y)|_{Y_0}$.

Changing the coordinate function $z_i$ to $az_i$
for some $i\in \{1, \dots, k\}$
with $a \in \Gamma(X, \mathscr{O}^*_{X})$
yields the commutative diagram
\begin{equation}
\label{eq:15}
\vcenter{
\xymatrix{
\mathscr{O}_Y \otimes_{\mathbb{R}} \Gr_l^{W(N)}V
\ar[d]_-{\varpi^{(l)}_Y} \ar[r]^-{\simeq}
&\mathscr{O}_Y \otimes_{\mathbb{R}} \Gr_l^{W(N)}V
\ar[d]^-{\varpi'^{(l)}_Y} \\
\Gr_l^{W(Y)}\mathscr{V}_Y \ar@{=}[r]
&\Gr_l^{W(Y)}\mathscr{V}_Y
}
}
\end{equation}
where the vertical arrows
$\varpi^{(l)}_Y$ and $\varpi'^{(l)}_Y$
are the isomorphisms
given by the coordinates $(y,z_1, \dots, z_i, \dots, z_n)$
and $(y, z_1, \dots, az_i, \dots, z_n)$ respectively,
and the top horizontal arrow
is the isomorphism
\begin{equation}
\exp(-(2\pi\sqrt{-1})^{-1}(\log a) \id \otimes N_i)
\colon
\mathscr{O}_Y \otimes_{\mathbb{R}} \Gr_l^{W(N)}V
\to
\mathscr{O}_Y \otimes_{\mathbb{R}} \Gr_l^{W(N)}V
\end{equation}
by choosing a brunch of $\log a$.
\end{say}

\begin{say}
We return to the situation in \ref{para:3}.
By Lemma 5.10 and Corollary 5.13 of \cite{fujino-fujisawa1},
the triple $(\mathscr{V}_Y, W(Y), F)$
is a bifiltered vector bundle on $Y$.
Next task is to define an $\mathbb{R}$-local system
$\mathbb{V}^{(l)}$ on $Y_0$
with
$\mathbb{C} \otimes_{\mathbb{R}} \mathbb{V}^{(l)}
\simeq
\ker(\nabla^{(l)}_Y)|_{Y_0}$ for every $l$.
Such $\mathbb{V}^{(l)}$ can be constructed by gluing as follows:
In the local situation in \ref{para:2},
the $\mathbb{R}$-local subsystem \eqref{eq:17}
of $\ker(\nabla^{(l)}_Y)|_{Y_0}$
is invariant under the change of the coordinate function
$z_i$ to $az_i$ with $a \in \mathscr{O}^*_X$
by the commutativity of the diagram \eqref{eq:15}
and the equality
\begin{equation}
\Gr_l^{W(N)}\rho'_Y
=
\exp(-(2\pi\sqrt{-1})^{-1}(\log a) \id \otimes N_i) \cdot
\Gr_l^{W(N)}\rho_Y
\end{equation}
by the suitable choice of a brunch of $\log a$,
where $\rho'_Y$ denotes the morphism
defined by the same way as $\rho_Y$
from the coordinate $(y,z_1, \dots, az_i, \dots, z_n)$.
Thus we obtain an $\mathbb{R}$-local system $\mathbb{V}^{(l)}$ on $Y_0$
with the desired property
$\mathbb{C} \otimes_{\mathbb{R}} \mathbb{V}^{(l)}
\simeq \ker(\nabla^{(l)}_Y)|_{Y_0}$ for every $l$.
Then Corollary 5.13 and Proposition 5.19 of \cite{fujino-fujisawa1}
imply that
$(\mathbb{V}^{(l)}, (\Gr_l^{W(Y)}\mathscr{V}_Y, F)|_{Y_0})$
is a polarizable variation of $\mathbb{R}$-Hodge structure
of weight $w+l$ on $Y_0$.
Combined with the fact that
the residue morphism of $\nabla^{(l)}_Y$ along
every irreducible component of $E|_Y$
is nilpotent by the local description \ref{para:2},
the bifiltered vector bundle $(\mathscr{V}_Y, W(Y), F)$
satisfies the conditions \ref{item:pvhs} and \ref{item:extension}.
\end{say}

\begin{say}
Next, we discuss the general case in \ref{para:1}.
For every $k \in \mathbb{Z}$,
there exists the unique $m(k) \in \mathbb{Z}$
such that $\chi(m(k)-1) < k \le \chi(m(k))$
by \ref{item:1}.
Then we have
\begin{equation}
\label{eq:4}
W_{m(k)-1}\mathscr{V}_Y=M_{\chi(m(k)-1)}\mathscr{V}_Y
\subset
M_{k-1}\mathscr{V}_Y
\subset
M_k\mathscr{V}_Y
\subset
M_{\chi(m(k))}\mathscr{V}_Y=W_{m(k)}\mathscr{V}_Y
\end{equation}
for every $k$.
By \eqref{eq:4},
the inclusion $M_k\mathscr{V} \hookrightarrow W_{m(k)}\mathscr{V}$
induces an isomorphism
\begin{equation}
\label{eq:5}
\Gr_k^M\mathscr{V}_Y
\xrightarrow{\simeq}
\Gr_k^M\Gr_{m(k)}^W\mathscr{V}_Y,
\end{equation}
under which
the filtration $F$ on the both sides are identified.
Because there exists an integer $l$
such that
$(\Gr_k^M\Gr_{m(k)}^W\mathscr{V}_Y, F)
\simeq (\Gr_l^{W(Y)}\Gr_{m(k)}^W\mathscr{V}_Y,F)$
as filtered vector bundles by \ref{item:2},
the triple $(\mathscr{V}_Y,M,F)$
is a bifiltered vector bundle
satisfying \ref{item:pvhs} and \ref{item:extension}
for $(Y, E|_Y)$.

Next, we relate the Higgs field of $(\Gr_k^M\mathscr{V}_Y,F)$
to that of $(\Gr_{m(k)}^W\mathscr{V}, F)$.
We fix $k \in \mathbb{Z}$
and denote
the integrable log connections
associated to
$(\Gr_{m(k)}^W\mathscr{V}, F),
(\Gr_k^M\Gr_{m(k)}^W\mathscr{V}_Y,F)$,
and $(\Gr_k^M\mathscr{V}_Y, F)$ by
$\nabla, \overline{\nabla}$ and $\nabla_Y$
respectively.
Similarly, the Higgs fields of
$(\Gr_{m(k)}^W\mathscr{V}, F)$
and $(\Gr_k^M\mathscr{V}_Y, F)$
are simply denoted by $\theta$ and $\theta_Y$ respectively.
By definition, the diagram
\begin{equation}
\xymatrix{
\Gr_k^M\mathscr{V}_Y \ar[d]_-{\simeq} \ar[r]^-{\nabla_Y}
& \Omega^1_Y(\log E|_Y) \otimes_{\mathscr{O}_Y}
\Gr_k^M\mathscr{V}_Y \ar[d]^-{\simeq} \\
\Gr_k^M\Gr_{m(k)}^W\mathscr{V}_Y \ar[r]_-{\overline{\nabla}}
& \Omega^1_Y(\log E|_Y) \otimes_{\mathscr{O}_Y}
\Gr_k^M\Gr_{m(k)}^W\mathscr{V}_Y
}
\end{equation}
is commutative,
where the left vertical arrow is the isomorphism \eqref{eq:5}
and the right is the isomorphism induced by \eqref{eq:5}.

On the other hand,
the filtration $W(Y)$ on
$\Gr_{m(k)}^W\mathscr{V}_Y$
is preserved by $\nabla|_Y$,
and hence so is $M$ on $\Gr_{m(k)}^W\mathscr{V}_Y$
by \ref{item:2}.
Moreover, the diagram
\begin{equation}
\xymatrix@C=48pt{
\Gr_k^M\Gr_{m(k)}^W\mathscr{V}_Y
\ar[r]^-{\overline{\nabla}}
\ar@{=}[d]
& \Omega^1_Y(\log E|_Y) \otimes_{\mathscr{O}_Y}
\Gr_k^M\Gr_{m(k)}^W\mathscr{V}_Y \ar[d] \\
\Gr_k^M\Gr_{m(k)}^W\mathscr{V}_Y \ar[r]_-{\Gr_k^M(\nabla|_Y)}
& (\mathscr{O}_Y \otimes_{\mathscr{O}_X} \Omega^1_X(\log D))
\otimes_{\mathscr{O}_Y}
\Gr_k^M\Gr_{m(k)}^W\mathscr{V}_Y
}
\end{equation}
is commutative,
where the right vertical arrow
is induced from the injection
$\Omega^1_Y(\log E|_Y) \hookrightarrow
\mathscr{O}_Y \otimes_{\mathscr{O}_X} \Omega^1_X(\log D)$ above.
Combining these two commutative diagram,
we obtain the commutative diagram
\begin{equation}
\xymatrix@C=64pt{
\Gr_F^p\Gr_k^M\mathscr{V}_Y \ar[d]_-{\simeq} \ar[r]^-{\theta_Y}
& \Omega^1_Y(\log E|_Y) \otimes_{\mathscr{O}_Y}
\Gr_F^{p-1}\Gr_k^M\mathscr{V}_Y \ar[d] \\
\Gr_F^p\Gr_k^M\Gr_{m(k)}^W\mathscr{V}_Y
\ar[r]^-{\Gr_F^p\Gr_k^M(\nabla|_Y)} \ar[d]_-{\simeq}
& (\mathscr{O}_Y \otimes_{\mathscr{O}_X} \Omega^1_X(\log D))
\otimes_{\mathscr{O}_Y}
\Gr_F^{p-1}\Gr_k^M\Gr_{m(k)}^W\mathscr{V}_Y \ar[d] \\
\Gr_k^M\Gr_F^p\Gr_{m(k)}^W\mathscr{V}_Y \ar[r]_-{\Gr_k^M(\theta|_Y)}
& (\mathscr{O}_Y \otimes_{\mathscr{O}_X} \Omega^1_X(\log D))
\otimes_{\mathscr{O}_Y}
\Gr_k^M\Gr_F^{p-1}\Gr_{m(k)}^W\mathscr{V}_Y
}
\end{equation}
for every $p$,
where the lower vertical arrows are induced
from the exchanging isomorphism between
$\Gr_k^M\Gr_F^p$
and $\Gr_F^p\Gr_k^M$
and between $\Gr_k^M\Gr_F^{p-1}$
and $\Gr_F^{p-1}\Gr_k^M$
respectively.
Then, by Lemma \ref{lem:3} and Corollary \ref{cor:1} below,
we have the commutative diagram
\begin{equation}
\label{eq:7}
\vcenter{
\xymatrix@C=48pt{
\Gr_F^{\bullet}\Gr_k^M\mathscr{V}_Y \ar[r]^-{\tilde{\theta}_Y} \ar[d]
& \Omega^1_Y(\log E|_Y) \otimes_{\mathscr{O}_Y}
\Gr_k^M\Gr_F^{\bullet}\mathscr{V}_Y \ar[d] \\
\Gr_k^M\Gr_F^{\bullet}\Gr_{m(k)}^W\mathscr{V}_Y
\ar[r]_-{\Gr_k^M\tilde{\theta}|_Y}
& (\mathscr{O}_Y \otimes_{\mathscr{O}_X} \Omega^1_X(\log D))
\otimes_{\mathscr{O}_Y}
\Gr_k^M\Gr_{m(k)}^W\Gr_F^{\bullet}\mathscr{V}_Y,
}}
\end{equation}
where the top horizontal arrow
is the composite \eqref{eq:9} for $\theta_Y$
and the bottom horizontal arrow
is obtained by taking $\Gr_k^M$
on the restriction to $Y$ of the morphism $\tilde{\theta}$.
\end{say}

\begin{lem}
\label{lem:3}
Let $V$ be an object of an abelian category
equipped with a finite decreasing filtrations $F$
and two increasing filtrations $W,M$.
Under the canonical isomorphism
\begin{equation}
\Gr_F^p\Gr_m^WV \simeq \Gr_m^W\Gr_F^pV
\end{equation}
for every $m,p$,
the subobject $M_k\Gr_F^p\Gr_m^WV$
on the left hand side
is isomorphic to
the subobject $M_k\Gr_m^W\Gr_F^pV$
if $W_{m-1}V \subset M_kV \subset W_mV$.
\end{lem}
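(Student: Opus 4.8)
The plan is to realize the canonical isomorphism through a single common presentation of both graded pieces as a subquotient of $V$, and then to match the two induced $M$-subobjects by two applications of the modular law, one drawing on each of the hypothesised inclusions $W_{m-1}V\subset M_kV$ and $M_kV\subset W_mV$. Every manipulation I will use involves only meets and joins of subobjects of $V$ together with the standard identification of iterated subquotients, so it suffices to argue in the lattice of subobjects of $V$, which is modular in any abelian category; for brevity I abbreviate $W_mV,F^pV,M_kV$ by $W_m,F^p,M_k$, and I freely use the modular law $C\cap(A+B)=A+(C\cap B)$ whenever $A\subset C$.

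First I would make the canonical isomorphism explicit. Setting $B_p=(F^p\cap W_m)+W_{m-1}$, the object $\Gr_F^p\Gr_m^W V$ is the subquotient $B_p/B_{p+1}$ of $V$, and the inclusion $F^p\cap W_m\hookrightarrow B_p$ induces a surjection $F^p\cap W_m\to\Gr_F^p\Gr_m^W V$ whose kernel, by the modular law together with $W_m\cap W_{m-1}=W_{m-1}$, is exactly $K:=(F^{p+1}\cap W_m)+(F^p\cap W_{m-1})$. Symmetrically, with $A_m=(F^p\cap W_m)+F^{p+1}$ the object $\Gr_m^W\Gr_F^p V$ is the subquotient $A_m/A_{m-1}$, and the inclusion $F^p\cap W_m\hookrightarrow A_m$ induces a surjection with the same kernel $K$. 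Hence both graded pieces are canonically identified with $G:=(F^p\cap W_m)/K$, and this shared identification is precisely the canonical isomorphism in the statement.

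Next I would compute the two $M$-subobjects inside $G$. By transitivity of the induced filtration along iterated subquotients, $M_k\Gr_F^p\Gr_m^W V$ equals $((M_k\cap B_p)+B_{p+1})/B_{p+1}$, so its preimage in $F^p\cap W_m$ under the surjection above is $N_1:=(F^p\cap W_m)\cap((M_k\cap B_p)+B_{p+1})$; likewise $M_k\Gr_m^W\Gr_F^p V$ has preimage $N_2:=(F^p\cap W_m)\cap((M_k\cap A_m)+A_{m-1})$. Both $N_1$ and $N_2$ contain $K$, so under the identifications with $G$ the two subobjects become $N_1/K$ and $N_2/K$, and the statement reduces to the equality $N_1=N_2$ of subobjects of $F^p\cap W_m$.

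The final step is where the hypothesis is used. Since $W_{m-1}\subset M_k$, the modular law gives $M_k\cap B_p=W_{m-1}+(M_k\cap F^p\cap W_m)$, and a second application inside the ambient object $F^p\cap W_m$ then yields $N_1=(M_k\cap F^p\cap W_m)+K$. On the other side, since $M_k\subset W_m$ one first checks $A_m\cap W_m=F^p\cap W_m$ and hence $M_k\cap A_m=M_k\cap F^p\cap W_m$; one more use of the modular law gives $N_2=(M_k\cap F^p\cap W_m)+K$ as well. Thus $N_1=N_2=(M_k\cap F^p\cap W_m)+(F^{p+1}\cap W_m)+(F^p\cap W_{m-1})$, which is the desired identification. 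I expect no conceptual difficulty here; the only thing to watch is the bookkeeping in these modular-law reductions, and in particular the fact that each of the two inclusions $W_{m-1}\subset M_k$ and $M_k\subset W_m$ is invoked exactly once and is genuinely necessary for the corresponding simplification.
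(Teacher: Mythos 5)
Your proposal is correct and follows essentially the same route as the paper: both arguments realize the canonical isomorphism through the common presentation of the two iterated graded pieces as the subquotient $(F^pV\cap W_mV)/\left(F^{p+1}V\cap W_mV+F^pV\cap W_{m-1}V\right)$, and then show that each induced $M_k$-subobject corresponds to the image of $M_kV\cap F^pV\cap W_mV$, invoking $W_{m-1}V\subset M_kV$ for one side and $M_kV\subset W_mV$ for the other via the modular law. The only difference is cosmetic bookkeeping (you track preimages $N_1,N_2$ under the surjection from $F^pV\cap W_mV$, while the paper pushes the corresponding quotient expressions forward), so no further comparison is needed.
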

\begin{proof}
Under the canonical isomorphisms
\begin{equation}
\label{eq:8}
\Gr_F^p\Gr_m^WV
\simeq
\frac{F^pV \cap W_mV}{F^{p+1}V \cap W_mV+F^pV \cap W_{m-1}V}
\simeq
\Gr_m^W\Gr_F^pV
\end{equation}
we compare
$M_k\Gr_F^p\Gr_m^WV$ and $M_k\Gr_m^W\Gr_F^pV$.
We have
\begin{equation}
\begin{split}
M_k\Gr_m^WV &\cap F^p\Gr_m^WV \\
&=\frac{(M_kV \cap W_mV+W_{m-1}V) \cap (F^pV \cap W_mV+W_{m-1}V)+W_{m-1}V}
{W_{m-1}V} \\
&=\frac{(M_kV \cap W_mV+W_{m-1}V) \cap F^pV+W_{m-1}V}{W_{m-1}V}
\end{split}
\end{equation}
on $\Gr_m^WV=W_mV/W_{m-1}V$ by definition.
Therefore, under the first isomorphism in \eqref{eq:8},
\begin{equation}
\begin{split}
M_k\Gr_F^p&\Gr_m^WV \\
&\simeq
\frac{(M_kV \cap W_mV+W_{m-1}V) \cap F^pV
+F^{p+1}V \cap W_mV+F^pV \cap W_{m-1}V}
{F^{p+1}V \cap W_mV+F^pV \cap W_{m-1}V} \\
&=
\frac{M_kV \cap F^pV
+F^{p+1}V \cap W_mV+F^pV \cap W_{m-1}V}
{F^{p+1}V \cap W_mV+F^pV \cap W_{m-1}V}
\end{split}
\end{equation}
by the assumption
$W_{m-1}V \subset M_kV \subset W_mV$.
On the other hand,
we have
\begin{equation}
\begin{split}
M_k\Gr_F^pV &\cap W_m\Gr_F^pV \\
&=\frac{(M_kV \cap F^pV+F^{p+1}V) \cap (W_mV \cap F^pV+F^{p+1}V)+F^{p+1}V}
{F^{p+1}V} \\
&=\frac{(M_kV \cap F^pV+F^{p+1}V) \cap W_mV+F^{p+1}V}{F^{p+1}V}
\end{split}
\end{equation}
and then
\begin{equation}
M_k\Gr_m^W\Gr_F^pV \simeq
\frac{(M_kV \cap F^pV+F^{p+1}V) \cap W_mV+F^{p+1}V \cap W_mV+F^pV \cap W_{m-1}V}
{F^{p+1}V \cap W_mV+F^pV \cap W_{m-1}V}
\end{equation}
under the second isomorphism in \eqref{eq:8}.
By the assumption $M_kV \subset W_mV$,
the equality
\begin{equation}
(M_kV \cap F^pV+F^{p+1}V) \cap W_mV
=M_kV \cap F^pV+F^{p+1}V \cap W_mV
\end{equation}
can be easily checked.
Thus we obtain
\begin{equation}
M_k\Gr_m^W\Gr_F^pV \simeq
\frac{M_kV \cap F^pV+F^{p+1}V \cap W_mV+F^pV \cap W_{m-1}V}
{F^{p+1}V \cap W_mV+F^pV \cap W_{m-1}V}
\end{equation}
which implies the conclusion.
\end{proof}

\begin{cor}
\label{cor:1}
In the same situation as in Lemma \ref{lem:3},
we have the commutative diagram
\begin{equation}
\xymatrix{
\Gr_F^p\Gr_k^MV \ar[r]^-{\simeq} \ar[d]_-{\simeq}
& \Gr_k^M\Gr_F^pV \ar[dd]^-{\simeq} \\
\Gr_F^p\Gr_k^M\Gr_m^WV \ar[d]_-{\simeq} & \\
\Gr_k^M\Gr_F^p\Gr_m^WV \ar[r]_-{\simeq} & \Gr_k^M\Gr_m^W\Gr_F^pV
}
\end{equation}
for the canonical isomorphisms
if $W_{m-1}V \subset M_{k-1}V \subset M_kV \subset W_mV$.
\end{cor}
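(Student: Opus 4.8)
The plan is to prove commutativity by the method of Lemma \ref{lem:3}: present every vertex as an explicit subquotient of $V$ via the symmetric Zassenhaus form, and check that each arrow is induced by $\mathrm{id}_V$ on these representatives, after which commutativity is forced. Writing $D_S:=F^{p+1}V\cap M_kV+F^pV\cap M_{k-1}V$, the three upper vertices $\Gr_F^p\Gr_k^MV$, $\Gr_k^M\Gr_F^pV$, and $\Gr_F^p\Gr_k^M\Gr_m^WV$ should each be identified canonically with $S:=(F^pV\cap M_kV)/D_S$; the first two by the usual symmetric form (the two isomorphisms in \eqref{eq:8}), and the third because, under the hypothesis, the inclusion $M_kV\hookrightarrow W_mV$ induces a filtered isomorphism $(\Gr_k^MV,F)\simeq(\Gr_k^M\Gr_m^WV,F)$ (cf.\ \eqref{eq:5}), to which one applies $\Gr_F^p$. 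The two lower vertices $\Gr_k^M\Gr_F^p\Gr_m^WV$ and $\Gr_k^M\Gr_m^W\Gr_F^pV$ should both be identified with $S':=(F^pV\cap M_kV+F^{p+1}V\cap W_mV)/(M_{k-1}V\cap F^pV+F^{p+1}V\cap W_mV)$: Lemma \ref{lem:3} applied with indices $k$ and $k-1$ gives compatible isomorphisms of the two $M$-subobjects, hence an isomorphism after passing to the quotient, and the hypothesis $W_{m-1}V\subset M_{k-1}V$ is used to absorb the spurious $F^pV\cap W_{m-1}V$ terms in numerator and denominator.

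Next I would identify each arrow on these representatives. The top arrow and the upper left arrow both become $\mathrm{id}_S$ (the former is the canonical exchange isomorphism between $\Gr_F^p\Gr_k^M$ and $\Gr_k^M\Gr_F^p$, the latter is $\Gr_F^p$ of the filtered isomorphism above), and the bottom arrow becomes $\mathrm{id}_{S'}$ (it is $\Gr_k^M$ of the isomorphism of Lemma \ref{lem:3}). The two remaining arrows — the lower left arrow, which is the exchange of $\Gr_F^p\Gr_k^M$ and $\Gr_k^M\Gr_F^p$ for the object $\Gr_m^WV$, and the right arrow, which is the isomorphism \eqref{eq:5} for the object $\Gr_F^pV$ (the nesting hypothesis descending to $\Gr_F^pV$ since $M_jV\subset W_mV$ and $W_{m-1}V\subset M_jV$ pass to the induced filtrations) — both become the single identity-induced isomorphism $S\xrightarrow{\simeq}S'$, $a+D_S\mapsto a+D_{S'}$. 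Well-definedness follows from $F^{p+1}V\cap M_kV\subset F^{p+1}V\cap W_mV$ (using $M_kV\subset W_mV$) and $F^pV\cap M_{k-1}V\subset M_{k-1}V\cap F^pV$, whence $D_S\subset M_{k-1}V\cap F^pV+F^{p+1}V\cap W_mV$; bijectivity follows from the modular law together with $M_kV\subset W_mV$. Granting this, the clockwise composite equals $\mathrm{id}_S$ followed by $S\to S'$ and the counterclockwise composite equals $S\to S'$ followed by $\mathrm{id}_{S'}$, so both send $a+D_S\mapsto a+D_{S'}$ and the diagram commutes.

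The main obstacle is the second step: one must choose the representatives coherently so that every arrow is literally realized by $a\mapsto a$, which requires running the modular-law simplifications of the proof of Lemma \ref{lem:3} simultaneously in numerators and denominators under the full nesting $W_{m-1}V\subset M_{k-1}V\subset M_kV\subset W_mV$. This bookkeeping can be bypassed entirely by the formal remark that every arrow in the diagram is induced by $\mathrm{id}_V$ — each being a Noether--Zassenhaus exchange isomorphism, or the isomorphism \eqref{eq:5} induced by an inclusion of subobjects, or $\Gr_k^M$ of the isomorphism of Lemma \ref{lem:3} — combined with the observation that there is at most one morphism between two fixed subquotients of $V$ induced by $\mathrm{id}_V$. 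This at once forces the two composites $\Gr_F^p\Gr_k^MV\to\Gr_k^M\Gr_m^W\Gr_F^pV$ to coincide.
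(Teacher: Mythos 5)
Your proposal is correct and is essentially the paper's own argument: the paper's proof consists precisely of the observation that, as in the proof of Lemma \ref{lem:3}, every vertex of the diagram is canonically a quotient of $M_kV \cap F^pV$ and every canonical isomorphism is compatible with these quotient maps, which is exactly your concluding ``formal remark'' that all arrows are induced by $\id_V$ and that such a morphism between two fixed subquotients is unique. Your explicit identifications of the upper vertices with $S=(F^pV\cap M_kV)/(F^{p+1}V\cap M_kV+F^pV\cap M_{k-1}V)$ and of the lower vertices with $S'$ are correct additional bookkeeping that the paper leaves implicit.
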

\begin{proof}
As in the proof of Lemma \ref{lem:3},
all objects in the diagram above
are the quotient of $M_kV \cap F^pV$.
Thus the conclusion is easily obtained.
\end{proof}

Now we are ready to prove Theorem \ref{thm:1}.

\begin{proof}[Proof of Theorem \ref{thm:1}]
What we have to show is the inequality $\deg \mathscr{L} \ge 0$
for every quotient line bundle $\mathscr{L}$ of $f^*\mathscr{F}$
and for a morphism $f \colon C \to X$ from a smooth projective curve $C$.
We prove it by induction on $\dim X$.

First we consider the case of $\dim X=1$.
Then we may assume that $f$ is surjective.
Therefore $f^*(\mathscr{V},W,F)$ satisfies
the conditions \ref{item:pvhs} and \ref{item:extension}
for $f^{-1}(X_0) \subset C$.
Thus we may assume that $X=C$ and $f=\id$.
Take the smallest integer $m$
such that the induced morphism
$W_m\mathscr{F} \to \mathscr{L}$
is not the zero morphism.
Then its image $\mathscr{M}$
admits a surjective morphism $\Gr_m^W\mathscr{F} \to \mathscr{M}$
by definition.
On the other hand, $\mathscr{M}$
is an invertible subsheaf of $\mathscr{L}$
because $\dim X=1$.
Then it is sufficient to prove $\deg \mathscr{M} \ge 0$.
By replacing $(\mathscr{V}, F)$ by $(\Gr_m^W\mathscr{V},F)$,
we may assume that $(\mathscr{V},W,F)$ is pure.
Then we can apply the argument
in the proof of Lemma 4.7 of \cite{brunebarbe1}
to the dual $\mathscr{F}^{\vee}$,
which is a subbundle of
$(\Gr_F^{\bullet}\mathscr{V})^{\vee}
\simeq \Gr_F^{\bullet}\mathscr{V}^{\vee}$
contained in the kernel of the Higgs field $\theta$,
and obtain the desired inequality $\deg \mathscr{M} \ge 0$ 
(see also Remark \ref{rem-new}).

Next, we assume $\dim X \ge 2$.
If $f(C) \cap X_0 \not= \emptyset$,
then $f^*(\mathscr{V},W,F)$ satisfies
the conditions \ref{item:pvhs} and \ref{item:extension}
for $f^{-1}(X_0) \subset C$.
In such a case, the same argument as above
implies the desired conclusion.
Therefore we may assume $f(C) \subset D$.
Then there exists an irreducible component $Y$ of $D$
with $f(C) \subset Y$.
As constructed in the paragraph \ref{para:1},
we have a bifiltered vector bundle
$(\mathscr{V}_Y,M,F)$ on $Y$
satisfying the conditions \ref{item:pvhs} and \ref{item:extension}
for $(Y,E|_Y)$.
The locally free $\mathscr{O}_Y$-module
$\mathscr{F}_Y=\mathscr{O}_Y \otimes_{\mathscr{O}_X}\mathscr{F}$
is equipped with the induced surjection
$\Gr_F^{\bullet}\mathscr{V}_Y \to \mathscr{F}_Y$.
Then we have the commutative diagram
\begin{equation}
\xymatrix{
\Gr_F^{\bullet}\Gr_k^M\mathscr{V}_Y \ar[r] \ar[d]
& \Omega^1_Y(\log E|_Y) \otimes_{\mathscr{O}_Y}
\Gr_k^M\mathscr{F}_Y \ar[d] \\
\Gr_k^M\Gr_F^{\bullet}\Gr_{m(k)}^W\mathscr{V}_Y \ar[r]
& (\mathscr{O}_Y \otimes_{\mathscr{O}_X} \Omega^1_X(\log D))
\otimes_{\mathscr{O}_Y}
\Gr_k^M\Gr_{m(k)}^W\mathscr{F}_Y
}
\end{equation}
by the diagram \eqref{eq:7}.
Therefore the composite of $\tilde{\theta}_Y$
and
\begin{equation}
\Omega_Y^1(\log E|_Y) \otimes_{\mathscr{O}_Y}
\Gr_k^M\Gr_F^{\bullet}\mathscr{V}_Y \\
\to
\Omega_Y^1(\log E|_Y) \otimes_{\mathscr{O}_Y}
\Gr_k^M\mathscr{F}
\end{equation}
as in \eqref{eq:6},
which is the top horizontal arrow in the diagram above,
is the zero morphism
because
the morphism
$\Omega^1_Y(\log E|_Y)
\to \mathscr{O}_Y \otimes_{\mathscr{O}_X}\Omega^1_X(\log D)$
is injective
and because the bottom horizontal arrow
is the zero morphism by the assumption for $\mathscr{F}$.
Therefore $\mathscr{F}_Y$ is semipositive by the induction hypothesis.
Thus $f^*\mathscr{F}=f^*\mathscr{F}_Y$
satisfies the positivity property as desired.
\end{proof}

%%%%%%%%%%%%%%%%%%%%

\end{document}